\documentclass[a4paper,oneside,11pt]{article}
\usepackage{amsmath}
\usepackage{amssymb}
\usepackage{indentfirst}

\usepackage{abstract}
\usepackage{titlesec}
\usepackage{graphicx}
\usepackage{fancyhdr}
\usepackage{latexsym}
\usepackage{mathrsfs}
\usepackage{booktabs}
\usepackage{mathrsfs}
\usepackage{mathtools}
\usepackage{amsthm}
\usepackage{wasysym}
\usepackage{cite}
\usepackage{color}
\usepackage{todonotes}
\usepackage{geometry}

\newtheorem{theorem}{Theorem}[section]
\newtheorem{remark}{Remark}[section]

\newtheorem{definition}{Definition}[section]
\newtheorem{lemma}[theorem]{Lemma}

\newtheorem{proposition}[theorem]{Proposition}

\newcommand{\n}{\rho}

\newcommand{\mr}{\mathbb{R}}
\newcommand{\lm}{\lambda}

\renewcommand{\div}{ {\rm div }  }

\newcommand{\pa}{\partial}
\renewcommand{\r}{\mathbb{R}}

\renewcommand{\b}{B_{N_*}}

\newcommand{\ia}{\int_0^T}

\newcommand{\bt}{\begin{theorem}}
\newcommand{\bl}{\begin{lemma}}
\newcommand{\el}{\end{lemma}}
\newcommand{\et}{\end{theorem}}
\newcommand{\ga}{\gamma}

\newcommand{\curl}{{\rm curl} }

\newcommand{\al}{\alpha}
\newcommand{\de}{\delta}
\newcommand{\ve}{\varepsilon}
\newcommand{\la}{\label}

\newcommand{\ol}{\overline}

\newcommand{\bn}{\begin{eqnarray}}
\newcommand{\en}{\end{eqnarray}}
\newcommand{\bnn}{\begin{eqnarray*}}
\newcommand{\enn}{\end{eqnarray*}}

\newcommand{\bnnn}{\begin{eqnarray*}}
\newcommand{\ennn}{\end{eqnarray*}}

\newcommand{\ba}{\begin{aligned}}
\newcommand{\ea}{\end{aligned}}
\newcommand{\be}{\begin{equation}}
\newcommand{\ee}{\end{equation}}

\def\O{{\r^2 }}
\def\p{\partial}
\def\norm[#1]#2{\|#2\|_{#1}}

\def\o{\omega}

\newcommand{\no}{\nonumber\\}

\newcommand{\si}{\sigma}

\def\la{\label}

\def\na{\nabla}
\def\on{\bar\n}

\makeatletter      
\@addtoreset{equation}{section}
\makeatother

\makeatletter      
\@addtoreset{equation}{section}
\makeatother       

\title{
Global Well-Posedness and Large Time Asymptotic Behavior of Strong Solutions to the Compressible Viscous Non-Resistive MHD Equations with Vacuum}

\author{Jiaxu L{\small I}$^{a}$, Boqiang L{\small \"U}$^{b}$, Bing Y{\small UAN}$^{b}$, \thanks{email: jiaxvlee@gmail.com (J.X. Li), lvbq86@ncu.edu.cn \& lvbq86@163.com (B.Q.   L\"u), bingyuan1201@163.com (B. Yuan)} \\
{\normalsize a. School of Mathematical Sciences,}\\
{\normalsize Shenzhen University, Shenzhen 518061, P. R. China;}\\
{\normalsize b. School of Mathematics and Computer Sciences,}\\ {\normalsize \& Institute of Mathematics and Interdisciplinary Sciences,}\\ {\normalsize  Nanchang University, Nanchang 330031, P. R. China;}}
\date{ }

\begin{document}
\maketitle
\begin{abstract}
In this paper, we consider the  Cauchy problem to 
the compressible viscous non-resistive magnetohydrodynamic (MHD) system in $\r^3$, in which the velocity is planar while the magnetic field is vertical. 
For regular initial data with small energy but possibly large oscillations, we establish the global existence and uniqueness of strong solution with vacuum. In addition, the large-time behavior of the solution is investigated. 
A key ingredient is an application of the Bourgain--Brezis divergence estimate to handle the total pressure $\rho^\gamma+m^2$ for any adiabatic index $\gamma>1$. This enables us to control both components of the total pressure, $\rho^\gamma$ and $m^2$, by a single estimate that remains valid in the presence of vacuum. 
To the best of our knowledge, our result is the first one on global solvability to the compressible, viscous,
non-resistive MHD system in the special 2$\frac12$-dimensional whole space allowing vacuum.
\end{abstract}
\textbf{Keywords:} Compressible MHD equations; Global strong solution;  Large-time behavior; Non-resistive; Vacuum

\section{Introduction}
The motion of a three-dimensional (3D) viscous compressible fluid under the effect of electromagnetic field is governed by the compressible MHD equations:
\begin{equation}\label{1}
\left\{ \begin{array}{l}
  \rho_t+\mathrm{div}(\rho v)=0,\\
  (\rho v)_t+\mathrm{div}(\rho v\otimes v)-\mu\Delta v-(\mu+\lambda)\nabla\div v+\nabla P=(\curl H)\times H, \\
  H_{t}- \eta \Delta H=\mbox{curl}(v\times H),\quad\\
  \mbox{div}H=0.
  \end{array} \right.
\end{equation}
Here $t\geq0$ is time, $x=(x_1,x_2,x_3)\in\Omega\subset\mathbb{R}^3$ is the spatial coordinate,  
the unknown functions $\rho$, $v$, $P(\rho)=A\rho^\gamma (A>0, \gamma>1)$, and $H$ denote the density, velocity, pressure, and the magnetic field, respectively.  The constant
viscosity coefficients $\mu$ and $\lambda$ satisfy the physical
restrictions: 
\be\no
\mu>0,\quad 2\mu +3\lambda\ge 0.
\ee
The constant $\eta\geq 0$ is the magnetic diffusivity describing a magnetic diffusion coefficient of the magnetic field. Without loss of generality, we assume that $A=1.$

This paper focuses on a special 2$\frac12$-D compressible viscous non-resistive MHD system. The motion of fluids takes place in the plane while the magnetic field acts on fluids only in the vertical direction, namely
\be\no
v=(v^1(t,x_1,x_2),v^2(t,x_1,x_2),0)\triangleq(u,0),\quad \rho\triangleq\rho(t,x_1,x_2),\quad H\triangleq(0,0,\sqrt2m(t,x_1,x_2)).
\ee
Then \eqref{1} reduces to
\be \la{a1}
\begin{cases}
\n_t + \div(\n u) = 0,\\
(\n u)_t + \div(\n u\otimes u) -\mu\Delta u - (\mu + \lambda)\nabla\div u+ \nabla P+\nabla m^2 = 0,\\
m_t+\div(mu)=0.
\end{cases}
\ee

We consider  the Cauchy problem
for (\ref{a1}) with $(\n,u,m)$  vanishing at infinity (in some weak sense) with given initial data $\n_0$, $u_0,$  and $m_0$ given by
\be
\la{h2} \n(x,0)=\n_0(x), \quad \n u(x,0)=\n_0u_0(x),\quad m(x,0)=m_0, \quad x\in  \r^2.
\ee

The MHD system \eqref{1} models the dynamics of electrically conducting fluids, such as plasmas, and incorporates the effects of magnetic fields on fluid motion. If this motion occurs in the absence of a magnetic field, that is $H =0$, the MHD system \eqref{1} becomes the well-known Navier-Stokes equations for compressible fluids, which have been studied extensively in  \cite{Nash1962,Serrin1959,Tani19774,3,Straskraba1993,matsumura1980initial,Lions1998-2,Feireisl2001,jiang1,jiang2,HuangLiXin2012,Li201977} and the references therein. For the constant viscous flows, the local well-posedness of smooth solutions is established in \cite{Nash1962,Serrin1959,Tani19774} (in the absence of vacuum)  and \cite{3,Straskraba1993} (with initial vacuum).
The global classical solutions were first obtained by Matsumura-Nishida \cite{matsumura1980initial} for initial data close to a non-vacuum equilibrium in $H^3$. In the presence of initial vacuum, the global existence of solutions becomes much more challenging due to the degeneration of momentum equation. The major breakthrough is due to Lions \cite{Lions1998-2}, who obtained the global existence of finite-energy weak solutions when the adiabatic exponent $\gamma\ge9/5$ for the 3D case, which was further refined to the critical case $\gamma > 3/2$ by Feireisl \cite{Feireisl2001}. For spherically symmetric or axisymmetric initial data, Jiang-Zhang \cite{jiang1,jiang2} proved the global existence of weak solutions with vacuum for any $\gamma>1$. 
For the Cauchy problem,
Huang-Li-Xin \cite{HuangLiXin2012} and Li-Xin \cite{Li201977}  established the  3D and 2D global  classical solution  with  small total energy but possibly large oscillations. 
More related issues concerning the global   solutions with smallness initial data can be found in \cite{zcj1,why1,ljk1} and references therein.

Then, back to the MHD system \eqref{1}, whose mathematical structure and physical mechanism are more complicated due to the fluid dynamic motion and the magnetic field interact strongly on each other. Here, we briefly review some results on the well-posedness of solutions with vacuum,
which are more closely related to the topic of this paper. When the resistive coefficient $\eta>0$, the local existence of strong solutions with vacuum is shown in \cite{Fan2009,Lv2015}. 
Hu-Wang \cite{huwang2010} obtained the global existence of finite energy renormalized weak solutions with suitably large adiabatic exponent $\gamma$ in 3D bounded domains. Liu-Yu-Zhang \cite{lyz2013jde} established the global weak solutions with small total energy to the 3D Cauchy problem when the far-field density is away from vacuum. For the Cauchy problem with far-field vacuum, Li-Xu-Zhang \cite{lxz2013siam} and L\"u-Shi-Xu \cite{lsxiumj}  proved the global 3D and 2D classical solutions with initial data which are of small energy but possibly large oscillations respectively. 

When the resistive coefficients $\eta=0$, the mathematical analysis of \eqref{1} becomes more challenging owing to the lack of a dissipation mechanism for the magnetic field. Jiang-Zhang \cite{10} proved the global well-posedness of strong solutions for large data and studied the non-resistive limit in 1D case.  For the 2D system, Li-Sun \cite{19} proved the global existence of weak solutions for non-isentropic cases, while Wu-Wu \cite{27} and Wu-Zhu \cite{29} obtained global strong solutions for small perturbations on $\mathbb{R}^{2}$ and $\mathbb{T}^{2}$,
respectively. Zhu \cite{34} established local
classical solutions to the 3D Cauchy problem allowing vacuum. Tan-Wang \cite{24} and Li \cite{16} established the global existence of smooth solutions in a horizontally infinite flat layer near a strong vertical magnetic field.  
Wu-Zhai \cite{28} and Li- Xu-Zhai \cite{20} obtained global strong 
solutions  on $\mathbb{T}^3$ under assumption that the initial data are sufficiently close to an equilibrium state.


Recently, the global well-posedness of solutions to a special 2$\frac12$-dimensional system \eqref{a1} has been considered in \cite{dong,dong1,zhai2025}.  Dong-Wu-Zhai \cite{dong} proved the global existence of strong solutions with small initial data in the critical Besov spaces, see also \cite{dong1} subsequently proved stability
and exponential decay
  of smooth solutions near a steady-state in $\mathbb{T}^2$. Zhai-Zhang \cite{zhai2025} 
 extended the
near-equilibrium analysis \cite{dong}  to the full non-isentropic system.  
All these global results \cite{dong,dong1,zhai2025} hold for strictly positive density. Therefore, it is natural to ask
whether the Cauchy problem of \eqref{a1}  on $\mathbb{R}^{2}$ admits global
strong solutions when the initial density may vanish.   In fact, this is the main aim  of our  paper.

Before stating the main results, we first explain the notations and conventions used throughout this paper. For $R>0$, set
$$
B_R  \triangleq\left.\left\{x\in\r^2\right|
\,|x|<R \right\} , \quad \int fdx\triangleq\int_{\r^2}fdx.
$$
Moreover, for $1\le r\le \infty, k\ge 1,$ the standard homogeneous and inhomogeneous Sobolev spaces are defined as follows:
\bnn
\begin{cases}
L^r=L^r(\r^2 ),\quad D^{k,r}=D^{k,r}(\r^2)=\{v\in L^1_{\rm loc}(\r^2)| \na^k v\in L^r(\r^2)\}, \\
D^1=D^{1,2},\quad W^{k,r}  = W^{k,r}(\r^2) , \quad H^k = W^{k,2} , \\
\dot H^\beta=\left\{f:\r^2\rightarrow \r\left|\|f\|^2_{\dot H^\beta}=\displaystyle{\int} |\xi|^{2\beta}|\hat f(\xi)|^2d\xi<\infty\right.
\right\},
\end{cases}
\enn
where $\hat f$ is the Fourier transform of $f.$ Next, we give the definition of strong solution to \eqref{a1} as follows:
\begin{definition}
If all derivatives involved in \eqref{a1} for $(\rho,u,m)$ are regular distributions satisfying \eqref{1.10}, and equations \eqref{a1} hold almost everywhere in $\r^2\times (0,T),$ then $(\n,u,m)$  is called a strong solution to \eqref{a1}.
\end{definition}

The initial total energy is defined as:
\bnn
C_0 =\int_{\r^2}\left(\frac{1}{2}\n_0|u_0|^2 + \frac{1}{\ga-1}P(\n_0)+m_0^2 \right)dx.
\enn
Without loss of generality, assume that the initial density $\n_0$ satisfies
\be\la{oy3.7}
\int_{\r^2} \n_0dx=1,
\ee
which implies that there exists a positive constant $N_0$ such that
\be\la{oy3.8}
\int_{B_{N_0}}  \n_0  dx\ge \frac12\int\n_0dx=\frac12.
\ee

We can now state our main result in this paper, concerning the global existence  of  strong solutions.
\begin{theorem}\la{th1} In addition to \eqref{oy3.7} and  \eqref{oy3.8},  suppose that  the initial data $(\n_0,u_0,m_0)$ satisfy  for any given numbers $M>0,$ $\on\geq 1,\bar m\ge 1,$   $a>1,~q>2$, and $\beta\in(0,1]$,
\begin{equation} \la{co1}
\left\{\begin{array}{lll}
0\le \n_0\le\bar{\n},\quad |m_0|\leq \bar m, \quad  \bar x^a \rho_0\in   L^1 \cap H^1\cap W^{1,q},\\
u_0\in {\dot H}^\beta\cap D^1,\quad \bar x^{a}m_0\in L^1,\quad m_0\in H^1\cap W^{1,q},\quad \n_0^{1/2}u_0  \in L^2,
\end{array}\right.
\end{equation}
and that
\be\la{h7}
\|u_0\|_{{\dot H}^\beta}+ \|\n_0\bar x^a\|_{L^1}+\|m_0\bar x^{a}\|_{L^1}\le M ,
\ee
where
\be\la{2.07}
\bar x\triangleq(e+|x|^2)^{1/2} \log^2 (e+|x|^2).
\ee
Then there exists a positive constant $\ve$ depending on $\mu, \lambda, \ga, a, \on, \bar m, \beta, N_0,$ and $M$ such that if
\be\la{i7}
C_0\le\ve,
\ee
the problem  \eqref{a1}--\eqref{h2} has a unique global strong solution $(\n,u,m)$ satisfying   for any $0<T<\infty,$
\be\la{h8}
0\le\n(x,t)\le 2\bar{\n},\quad |m(x,t)|\le 2\bar m,\quad  (x,t)\in \O\times[0,T],\quad\quad\quad\quad\quad
\ee
\be\la{1.10}
\begin{cases}
\rho\in C([0,T];L^1 \cap H^1\cap W^{1,q} ),\quad m\in C([0,T];H^1\cap W^{1,q}),\\
\bar x^a\rho\in L^\infty( 0,T ;L^1\cap H^1\cap W^{1,q} ),\quad \bar x^a m\in L^\infty( 0,T ;L^1),\\
\sqrt{\n } u,\,\na u,\, \bar x^{-1}u,\,    \sqrt{t} \sqrt{\n}  u_t \in L^\infty(0,T; L^2 ), \\
\na u\in  L^2(0,T;H^1)\cap  L^{(q+1)/q}(0,T; W^{1,q}), \\
\sqrt{t}\na u\in L^2(0,T; W^{1,q} ),  \\
\sqrt{\n} u_t,  \,  \sqrt{t}\na u_t ,\, \sqrt{t} \bar x^{-1}u_t\in L^2(\O\times(0,T)),
   \end{cases}\ee
and
\be\la{l1.2}
\inf\limits_{0\le t\le T}\int_{B_{N_1(1+t)\log^\al(e+t) }}\n(x,t) dx\ge \frac14,
\ee
for any $\al>1$  and  some positive constant $N_1$ depending only on $\al,N_0, $ and $M.$ Moreover, $(\n,u, m)$ has the following decay
rates,  that is,  for $t\ge 1,$
 \be \la{sj}
\begin{cases}
\|\na u(\cdot,t)\|_{L^p}  \le C(p)t^{-1+1/p},\quad  for~p\in[2,\infty),\\
\|P(\cdot,t)\|_{L^r}+\|m^2(\cdot,t)\|_{L^r}\le C(r)t^{-1+1/r},\quad  for~r\in[2,\infty),\\
\|\rho(\cdot,t)\|_{L^r}\le C(r)t^{-\frac{r-1}{r(2\gamma-1)}},\quad for~r\in(1,2\gamma],\\
\|m(\cdot,t)\|_{L^r}\le C(r)t^{-\frac{r-1}{3r}},\quad  for~r\in(1,4],\\
\|\na \omega(\cdot,t)\|_{L^2}+\|\na F(\cdot,t)\|_{L^2}\le Ct^{-1},
\end{cases} \ee
where
\be \la{hj1}
\o\triangleq \pa_1u^2-\pa_2u^1 ,\quad F\triangleq(2\mu+\lambda)\div u-P-m^2,
\ee
are respectively the vorticity and the so-called effective viscous flux, and $C$ depends on $\mu, \lambda, \ga, a,$ $\on,$ $\bar m,$ $\beta,$ $N_0,$ and $M.$
\end{theorem}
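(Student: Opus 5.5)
The plan follows the Li--Xin framework for 2D compressible Navier--Stokes with vacuum and small energy, adapted to the total pressure $P+m^2$. The key structural fact is that $\rho$ and $m$ satisfy the same transport equation, so wherever $\rho>0$ the ratio $m/\rho$ is transported. More usefully, $\n^\ga$ and $m^2$ satisfy analogous renormalized equations: $(m^2)_t+\div(m^2u)+m^2\div u=0$, i.e. $m^2$ behaves like a pressure with $\ga=2$. Writing $G\triangleq P+m^2$, we get
\[
G_t+u\cdot\na G+\ga P\div u+2m^2\div u=0,
\]
and the momentum equation has the Navier--Stokes form with $P$ replaced by $G$.

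First, take the local strong solution, obtained by standard methods (e.g. Zhu-type local theory, adapted to the weighted spaces as in Li--Xin). Then run a continuity argument on the a priori quantities
\[
A_1(T)=\sup_t \sigma\|\na u\|_{L^2}^2+\int_0^T\sigma\|\sqrt\n \dot u\|_{L^2}^2,\quad A_2(T)=\sup_t\sigma^2\|\sqrt\n\dot u\|_{L^2}^2+\int_0^T\sigma^2\|\na\dot u\|_{L^2}^2,
\]
with $\sigma=\min\{1,t\}$. The bootstrap hypotheses are $0\le\n\le2\on$, $|m|\le2\bar m$, and $A_1+A_2\le 2C_0^{1/2}$. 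The main steps are as follows.

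(i) The basic energy identity gives $\sup_t\int(\frac12\n|u|^2+\frac{P}{\ga-1}+m^2)+\int_0^T\|\na u\|_{L^2}^2\le C_0$.

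(ii) On the short interval $[0,\sigma(T)]$, estimate $\|\na u\|_{L^2}$ using $u_0\in\dot H^\beta$ by interpolating the linearized flow, as in Li--Xin.

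(iii) Obtain Hoff-type estimates for $A_1,A_2$ by testing the momentum equation with $\sigma\dot u$ and the material-derivative equation with $\sigma^2\dot u$. Use $F=(2\mu+\lm)\div u-G$ and $\o$, which satisfy $\Delta F=\div(\n\dot u)$ and $\mu\Delta\o=\curl(\n\dot u)$. The Caffarelli--Kohn--Nirenberg/weighted Hardy inequality $\|\n^{1/2}\bar x^{-1}...\|$ controls $\|u\bar x^{-\eta}\|_{L^p}$ through $\|\sqrt\n u\|_{L^2}+\|\na u\|_{L^2}$ via the mass lower bound \eqref{oy3.8} and its propagation \eqref{l1.2}. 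That propagation follows from the weighted estimates on $\n\bar x^a$ and $m\bar x^a$, obtained by multiplying the transport equations by $\bar x^a$.

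The terms involving $G$, for instance $\int G\div u$ and $\int \sigma G|\na u|^2$, need a bound on $\int_0^T\|G\|_{L^p}^p$ that is small in $C_0$. This is the \emph{main obstacle}. In Li--Xin one uses $P$ as a test function, i.e. the operator $(-\Delta)^{-1}\div$ applied to $\n\dot u$ and the equation for $P$. Here $G$ has two components with different exponents, and vacuum prevents dividing by $\n$. I will instead write $G=-F+(2\mu+\lm)\div u$ and integrate the $G$ transport equation times $G^{p-1}$, obtaining a positive coercive term of the form $\int G^{p}\cdot(\cdot)$ since $\ga P+2m^2\ge \min\{\ga,2\}G$. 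The remaining term, which involves $u$ and $\na u$ rather than $\div(\n\dot u)$, is handled by the Bourgain--Brezis estimate: the time-integrated $\n u$ piece is represented as $\p_t$ of a divergence-free-corrected field. Concretely, one solves $\div Y=\n u$-type equations with $Y\in L^\infty$ controlled by $\|\n u\|_{L^1\cap \dot W^{-1,2}}$, which avoids needing $\n\ge c>0$. Combining these gives $\int_0^T\|G\|_{L^2}^2+\|G\|_{L^3}^3\le CC_0^{\theta}$ uniformly in time.

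(iv) Obtain uniform upper bounds for $\n$ and $m$ via the Zlotnik lemma applied along particle paths to $(2\mu+\lm)\frac{D}{Dt}\log\n=-G-F$. The same equation holds for $\log|m|$, and $-G\le0$ gives the needed sign. $F$ is bounded in $L^1_tL^\infty$ by $\|\n\dot u\|_{L^p}$ and the previous estimates, small for $t\le\sigma(T)$ and of controlled growth thereafter. This closes the bootstrap for $\ve$ small.

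(v) Derive higher-order estimates, namely $\na\n,\na m\in L^q$ via Beale--Kato--Majda type logarithmic control of $\|\na u\|_{L^\infty}$ through $\o$ and $F$, together with the weighted $u_t$ bounds in \eqref{1.10}. Uniqueness then follows from a Lagrangian/weighted $L^2$ energy difference argument as in Li--Xin.

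Finally, for the decay \eqref{sj}, I use the time-weighted versions $\sup_t t\|\na u\|_{L^2}^2$ and $t^2\|\sqrt\n\dot u\|_{L^2}^2$ together with $\int_0^\infty\|G\|_{L^2}^2<\infty$. From these:

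\begin{itemize}
\item interpolation gives $\|G\|_{L^r}$ and $\|\na u\|_{L^p}$ with rate $t^{-1+1/r}$;
\item $\|\n\|_{L^r}\le\|\n\|_{L^1}^{\theta}\|P\|_{L^2}^{\cdots}$ yields the $\n$ rate;
\item $\|m\|_{L^r}$ follows from $\|m\|_{L^1}$ and $\|m^2\|_{L^2}$, i.e. $\|m\|_{L^4}$;
\item elliptic estimates give $\|\na F\|_{L^2}+\|\na\o\|_{L^2}\le C\|\n\dot u\|_{L^2}\le Ct^{-1}$.
\end{itemize}
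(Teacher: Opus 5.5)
There is a genuine gap at the step you yourself flag as the main obstacle: the small-in-$C_0$ control of $\|P+m^2\|_{L^2}$, which is needed to close \eqref{h99} and \eqref{h28}. The mechanism you describe does not work.

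\begin{itemize}
\item Writing the $\rho u$ contribution as a time derivative and integrating by parts in $t$ requires $\partial_t\Phi$ for the Bourgain--Brezis field. Bourgain and Brezis show that a solution of $\div\Phi=f$ with $\Phi\in L^\infty\cap\dot W^{1,2}$ cannot in general be chosen linearly in $f$, so $\Phi_t$ is neither well defined nor controlled.
\item Lemma \ref{lem00} needs a scalar datum in $L^2$. It gives no $L^\infty$ solution bounded by $\|\rho u\|_{L^1\cap\dot W^{-1,2}}$.
\item Testing the transport equation for $G$ with $G^{p-1}$ does produce the coercive term, but it leaves $\int G^{p-1}(\gamma P+2m^2)F\,dx$. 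In two dimensions the elliptic bound for $\|F\|_{L^2}$ (see \eqref{h20} with $p=2$) reintroduces $\|P\|_{L^2}+\|m^2\|_{L^2}$. So that route is circular as written.
\end{itemize}

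The missing idea is that no time integration is needed. At each fixed $t$, the paper applies Lemma \ref{lem00} to $f=P+m^2$. It then tests the momentum equation in material-derivative form, $-\nabla(P+m^2)=\rho\dot u-\mu\Delta u-(\mu+\lambda)\nabla\div u$, against $\Phi$.

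\begin{itemize}
\item Since $\|\rho\dot u\|_{L^1}\le\|\rho\|_{L^1}^{1/2}\|\rho^{1/2}\dot u\|_{L^2}$ and $\|\rho\|_{L^1}=1$, this gives the pointwise-in-time bound $\|P+m^2\|_{L^2}\le C\|\rho^{1/2}\dot u\|_{L^2}+C\|\nabla u\|_{L^2}$ (see \eqref{mgj}).
\item Hence $\int_0^T\sigma^2\|P+m^2\|_{L^2}^2\,dt\le C(A_1(T)+C_0)$.
\item Because $P\ge0$ and $m^2\ge0$, this controls both pieces for every $\gamma>1$.
\end{itemize}

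The same argument applied to $F$, using $\nabla F=\rho\dot u-\mu\nabla^\perp\omega$, would bound $\|F\|_{L^2}$ and repair your transport route. But it is the same fixed-time $L^1$ estimate on $\rho\dot u$, not a time-integrated one.

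There is also a smaller gap in the decay step. Interpolating $t\|G\|_{L^2}^2\le C$ with $\|G\|_{L^\infty}\le C$ only yields $\|G\|_{L^r}\le Ct^{-1/r}$. For $r>2$ this is weaker than the claimed $t^{-1+1/r}$. The paper obtains the sharp rate by multiplying the $L^p$ transport inequality \eqref{aa96a} by $t^{k}$ and inducting on $k$ (see \eqref{ly10}). The rate for $\|\nabla u\|_{L^p}$ then follows from \eqref{h18}, using $\|\rho\dot u\|_{L^2}\le Ct^{-1}$.
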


\begin{remark}
It is worth noting that we consider global solutions to the Cauchy problem for system \eqref{a1} in the presence of vacuum. Unlike the situation in \cite{dong1}, the equations for the velocity $u$ and the perturbation of $m$ do not possess wave structures. Consequently, the method adopted in this paper is fundamentally different from that in \cite{dong1}.
\end{remark}


\begin{remark}
    When $\gamma\geq 2$, the initial condition ${\bar x}^a\rho_0\in D^1\cap D^{1,q}$ can  be removed. Indeed, since $\nabla P=\gamma\rho^{\gamma-1}\nabla\rho$, the $\rho^{\gamma-1}$ $(\gamma-1\geq1)$ is sufficient to absorb the positive power weights generated by $u$. 
    However, it is worth noting that the regularity result $\bar x^a\rho\in L^\infty( 0,T ;D^1\cap W^{1,q} )$ will not be obtained.
\end{remark}

{\begin{remark}
   Although the magnetic field has no diffusive term, Theorem~1.1 still yields the algebraic decay (see \eqref{sj}). Specifically, the decay estimates for $P$ and $m^2$ can be derived from the following inequalities: for $t\geq 1,$
\begin{align}
&\|\rho\|_{L^r}\leq C\|\rho\|_{L^1}^\frac{2\gamma-r}{r(2\gamma-1)}\|\rho^\gamma\|_{L^2}^\frac{2(r-1)}{r(2\gamma-1)}\leq Ct^{-\frac{r-1}{r(2\gamma-1)}},\quad\text{for}\quad 1<r\leq2\gamma,\\
&\|m\|_{L^r}\leq C\|m\|_{L^1}^\frac{4-r}{3r}\|m^2\|_{L^2}^\frac{2(r-1)}{3r}\leq Ct^{-\frac{r-1}{3r}},\quad\text{for}\quad 1<r\leq4,
\end{align}
where we used $\int|m|dx=\int|m_0|dx$, which is directly derived from \eqref{mfc}. It is worth noting that the decay rate of $\|P(\cdot,t)\|_{L^r}$ (i.e. \eqref{sj}$_3$) is sharper than that in \cite[(1.15)]{Li201977}, while the decay rate of the magnetic field differs from that for the compressible viscous resistive MHD equations \cite{lsxiumj}.
\end{remark}}

{\begin{remark}
When $m=0$ (i.e., there is no magnetic field), the system \eqref{a1} reduces to the 2D compressible Navier-Stokes equations, and  Theorem \ref{th1}  reduces to the corresponding results of Li-Xin \cite{Li201977}. Roughly speaking, we generalize  the results of \cite{Li201977} to the special 2$\frac12$-D compressible viscous non-resistive MHD equations.
\end{remark}}



We offer some comments on our analysis of this paper. 
Note that for initial data in the class satisfying \eqref{oy3.7}-\eqref{2.07} except $u_0\in \dot H^\beta$, the local existence and uniqueness of strong solutions to the Cauchy problem \eqref{a1}-\eqref{h2} has been established recently in \cite{34}. Thus, to extend the strong solutions globally in time and allow the density to vanish initially, one needs global a priori estimates, which are independent of the lower bound of the initial density, of smooth solutions to \eqref{a1}-\eqref{h2} in suitable higher norms. Motivated by the recent work of Li–Xin \cite{Li201977} on the global existence for the 2D compressible Navier-Stokes equations, the key issue here is to derive an upper bound for $\rho$ and $|m|$ that is independent of the lower bound of the initial density.
However, the absence of magnetic dissipation introduces additional difficulties when dealing with the magnetic pressure.

The primary difficulty is to estimate the fluid pressure $P=\rho^\gamma$ and the magnetic pressure $m^2$ simultaneously. Although they enter the momentum equation through the total pressure $P+m^2$, their different powers prevent a direct extension of the argument in \cite[Lemma 3.3]{Li201977} to the full range $\gamma>1$. To overcome this difficulty, thanks to the Bourgain–Brezis divergence estimate (Lemma \ref{lem00}), we can construct a vector field $\Phi$ such that
$$
\div\Phi=P+m^2,\qquad \|\Phi\|_{L^\infty}+\|\na\Phi\|_{L^2}\le C\|P+m^2\|_{L^2},
$$
while the momentum equation can be rewritten as
\begin{align}\label{mini}
-\nabla(P+m^2)=\rho\dot u-\mu\Delta u-(\mu+\lambda)\nabla\operatorname{div}u,
\end{align}
then we can multiply \eqref{mini} by $\Phi$ and integrate the resulting equality over ${\O}$ to obtain
$$
\|P+m^2\|_{L^2}
\le
C\|\rho^{1/2}\dot u\|_{L^2}
+
C\|\nabla u\|_{L^2}.
$$
For details, see \eqref{mgj}. Since $P$ and $m^2$ are nonnegative, the above estimate controls both pressure components for every $\gamma>1$ and does not require a positive lower bound for $\rho$. It is a key ingredient in closing the energy estimates.

The second difficulty arises in deriving the higher integrability estimate for $\int\rho |u|^{2+\nu}dx$. The magnetic $m$ may change sign, and the basic energy estimate does not control $\int |m||u|^2dx$, so the estimates based on the density-weighted kinetic energy cannot be applied directly to $|m|$. 
To overcome this, we use the transport equation for $|m|$ to derive a bound on the spatial moment $\int|m|\bar x^a\,dx$ (see \eqref{mfc}--\eqref{mmfc}). Together with weighted velocity estimates, this bound allows us to control the magnetic and thereby obtain the higher integrability estimate.

Finally, when dealing with the estimation of higher-order derivatives $t^{1/2}\rho^{1/2} u_t\in L_t^\infty L_x^2$, the structure $\nabla m^2=2m\nabla m$ generates $m$ which is sufficient to absorb the positive power weights ${\bar x}^{3(q-2)a/7q}$ generated by $u$. 
This leaves $\nabla m$ unweighted and avoids imposing additional spatial weights on the magnetic gradient.

The rest of the paper is organized as follows: In Section 2, we collect some elementary facts and inequalities that will be used in later analysis. Sections 3 and 4 are devoted to deriving the necessary a priori estimates on strong solutions which are needed to extend the local solution to all time. The main result, Theorem 1.1, is proved in Section 5.

\section{Preliminaries}

In this section, we will recall some  known facts and elementary
inequalities which will be used frequently later.

We begin with the local existence of strong solutions,  whose proof can be obtained by the standard linearization method and the contraction mapping principle. And,  we omit the specific details.

\begin{lemma}\la{th0}
Assume that $(\n_0,u_0,m_0)$ satisfies \eqref{co1} except for the condition $u_0\in \dot H^\beta.$ Then there exist a small time $T_*>0$ and a unique strong solution $(\rho, u,m )$ to the problem \eqref{a1}-\eqref{h2}  in $\O\times(0,T_*)$ satisfying \eqref{1.10} and \eqref{l1.2}.
\end{lemma}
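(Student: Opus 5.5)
The plan is the standard linearization and contraction scheme used for the 2D compressible Navier--Stokes equations with far-field vacuum (Li--Xin \cite{Li201977}), adapted to carry the transport equation for $m$ alongside the one for $\n$. Since the system has no magnetic diffusion, $m$ satisfies exactly the same continuity-type equation $m_t+\div(mu)=0$ as $\n$. So $(\n,m)$ can be handled together as a pair of transported quantities, and $P+m^2$ can be treated as a single pressure, a nonnegative $C^1$ function of $(\n,m)$.

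First step: regularize the vacuum. I replace $\n_0$ by $\n_0+\delta\bar x^{-a'}$ for some $a'$, or by $\n_0+\delta$ on $B_R$ with a cutoff. This lets me solve a problem with positive density on bounded balls $B_R$ with Dirichlet data, as in \cite{Li201977}.

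Second step: the iteration. Given $v$ in the solution class, solve the linear transport equations $\n_t+\div(\n v)=0$ and $m_t+\div(mv)=0$ along characteristics. This gives
\[
\|(\n,m)(t)\|_{W^{1,q}}\le \|(\n_0,m_0)\|_{W^{1,q}}\exp\Big(C\int_0^t\|\na v\|_{W^{1,q}}ds\Big),
\]
and, for the weighted quantities $\bar x^a\n$ and $\bar x^a m$, the same bound with the extra factor $|v|\,|\na\bar x^a|\lesssim \bar x^{a}\,\bar x^{-1}|v|\log$. Here the $\bar x^{-1}v$ control comes from the 2D weighted Hardy-type inequality valid once $\int_{B_{N}}\n\ge 1/4$. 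Then solve the linear Lamé system $\n u_t+\n v\cdot\na u-\mu\Delta u-(\mu+\lambda)\na\div u=-\na(P+m^2)$ for $u$. The $L^\infty$ bounds $0\le\n\le2\bar\n$ and $|m|\le2\bar m$ hold on a short time interval by continuity.

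Third step: close uniform bounds on $[0,T_*]$ for the class \eqref{1.10}, independent of $\delta$ and $R$. This uses the energy estimate, the $\sqrt{\n}u_t$ and $\sqrt t\na u_t$ estimates, and the elliptic estimate $\|\na^2u\|_{L^q}\le C(\|\n\dot u\|_{L^q}+\|\na(P+m^2)\|_{L^q})$ with $\na m^2=2m\na m$ bounded via $|m|\le 2\bar m$. The weight is what handles the vacuum: $\n\dot u$ in $L^q$ is bounded through $\n^{1/2}$ times weighted terms. The lower bound \eqref{l1.2} on a short time follows from the mass inequality
\[
\frac{d}{dt}\int\n\varphi_N\,dx\ge -C N^{-1}\|\n^{1/2}u\|_{L^2}
\]
with a cutoff $\varphi_N$. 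Contraction is proved in the weak norm $\|\n^{1/2}(u_1-u_2)\|_{L^2}+\|(\n_1-\n_2,m_1-m_2)\|_{L^2}$. After that I pass $R\to\infty$ and $\delta\to0$ by compactness, and uniqueness follows from the same weak-norm Gronwall argument.

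The main obstacle is the difference estimate in the presence of vacuum. The term $\int(\n_1-\n_2)\dot u_2\cdot(u_1-u_2)$ must be controlled using only $\|\n^{1/2}(u_1-u_2)\|_{L^2}$, $\|\na(u_1-u_2)\|_{L^2}$, and the weighted $L^2$ bound of $\bar x^{-1}(u_1-u_2)$. To do this I would estimate $\bar x^{a}(\n_1-\n_2)$ in $L^2$ and pair it with $\bar x^{-a}$-weighted bounds on $u_1-u_2$. The singular $\sqrt t$ weights near $t=0$ only make $\int_0^t\|\na u_2\|_{W^{1,q}}$ integrable, which is why \eqref{1.10} contains the $L^{(q+1)/q}(0,T;W^{1,q})$ bound. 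The magnetic difference term $\na(m_1^2-m_2^2)$ is integrated by parts onto $\div(u_1-u_2)$, so it causes no extra trouble.
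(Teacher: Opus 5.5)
\textbf{Gap: uniqueness on $\mathbb R^2$ for the magnetic component.} Your existence scheme is the standard route the paper only invokes without details: regularize the vacuum on $B_R$, linearize and iterate, get bounds uniform in $R$ and $\delta$, then pass to the limit by compactness. On $B_R$ your weak-norm contraction is fine, because the Dirichlet condition gives Poincar\'e. The gap is uniqueness on $\mathbb R^2$, exactly where you say the magnetic field ``causes no extra trouble''. The problem is not $\na(m_1^2-m_2^2)$ in the momentum equation. It is the transport equation for $\delta m\triangleq m_1-m_2$,
\[
(\delta m)_t+\div(\delta m\,u_1)+\div(m_2\,\delta u)=0,\qquad \delta u\triangleq u_1-u_2,
\]
whose $L^2$ estimate contains $\int \delta m\,\na m_2\cdot\delta u\,dx$.

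In the whole plane with vacuum, $\delta u$ belongs to no $L^p(\mathbb R^2)$. Only $\bar x^{-\eta}\delta u$ is controlled, via \eqref{ljo1}, so a positive weight has to sit on $\na m_2$ or on $\delta m$.
\begin{itemize}
\item For $\rho$, the weight on the gradient is supplied by the hypothesis $\bar x^a\rho_0\in H^1\cap W^{1,q}$. Even there you should weight $\rho_1-\rho_2$ by $\bar x^{r}$ with $r<a$, not by $\bar x^a$.
\item For $m$, \eqref{co1} gives only $\bar x^a m_0\in L^1$ and an unweighted $m_0\in H^1\cap W^{1,q}$.
\item Putting the weight on $\delta m$ does not help. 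The equation for $\bar x^{\eta}\delta m$ regenerates $\int \bar x^{2\eta}\delta m\,\na m_2\cdot\delta u\,dx$, which again needs a weighted $\na m_2$.
\item Integrating by parts onto $\na\delta m$ gives a term that is only linear in the difference, since $\na\delta m$ is merely bounded.
\item Interpolating $\bar x^{\eta}\delta m$ between $\|\delta m\|_{L^2}$ and the bounded weighted norms of $m_i$ gives a sublinear Gronwall inequality, which does not force $\delta m=0$.
\end{itemize}
So $\rho$ and $m$ obey the same equation but not the same hypotheses, and your weak-norm Gronwall argument does not close for $m$.

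\textbf{How to repair it.} One fix needing no extra assumptions is to prove uniqueness in Lagrangian coordinates. By \eqref{1.10}, $\na u_i\in L^1(0,T;L^\infty)$, so the flows $X_i$ are Lipschitz and
\[
\rho_i\circ X_i=\rho_0/J_i,\qquad m_i\circ X_i=m_0/J_i .
\]
All density, magnetic and pressure differences then reduce to $J_1-J_2=O\big(\int_0^t|\na(v_1-v_2)|\,ds\big)$, where $v_i\triangleq u_i\circ X_i$. The coefficients involve only $\|\rho_0\|_{L^\infty}$ and $\|m_0\|_{L^\infty}$. The $\rho_0$-weighted energy estimate for $v_1-v_2$ then closes using $t\|\na u_2\|_{L^\infty}^2\in L^1(0,T)$, which also follows from \eqref{1.10}. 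This uses no spatial weights and never touches $\na m$.

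The alternative is to add $\bar x^a m_0\in H^1\cap W^{1,q}$ to the hypotheses. That is exactly the weighted magnetic gradient the paper deliberately avoids.
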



Next, the following well-known Gagliardo-Nirenberg inequality (see \cite{nir}) will be used later.
\begin{lemma}
[Gagliardo-Nirenberg]\la{l1} For  $p\in [2,\infty),q\in(1,\infty), $ and
$ r\in  (2,\infty),$ there exists a constant
$C>0$ which may depend  on $p,q, $ and $r$ such that for   $f\in H^1({\O }) $
and $g\in  L^q(\O )\cap D^{1,r}(\O), $    we have \be
\la{g1}\|f\|_{L^p(\O)}^p\le C \|f\|_{L^2(\O)}^{2}\|\na
f\|_{L^2(\O)}^{p-2} ,\ee  \be
\la{g2}\|g\|_{C\left(\ol{\O }\right)} \le C
\|g\|_{L^q(\O)}^{q(r-2)/(2r+q(r-2))}\|\na g\|_{L^r(\O)}^{2r/(2r+q(r-2))} .
\ee
\end{lemma}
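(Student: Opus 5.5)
The plan is to prove both inequalities by scaling and interpolation, since \eqref{g1} and \eqref{g2} are special cases of the classical Gagliardo--Nirenberg--Sobolev inequalities in two dimensions.

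\emph{Proof of \eqref{g1}.} First check the exponents by scaling $f_\lambda(x)=f(\lambda x)$. The left side scales like $\lambda^{-2}$, and the right side scales like $\lambda^{-2}\cdot\lambda^{0}$, because $\|\na f\|_{L^2}$ is scale invariant in $\r^2$. So the inequality is dimensionally consistent. For $p=2$ it is trivial. For $p>2$ I would use the Ladyzhenskaya-type argument.
\begin{itemize}
\item Apply the Sobolev embedding $W^{1,1}(\r^2)\hookrightarrow L^2(\r^2)$, that is $\|g\|_{L^2}\le C\|\na g\|_{L^1}$, to $g=|f|^{p/2}$.
\item This gives $\|f\|_{L^p}^{p}\le C\big(\int |f|^{p/2-1}|\na f|\,dx\big)^2\le C\|f\|_{L^{p-2}}^{p-2}\|\na f\|_{L^2}^2$.
\end{itemize}
That is the wrong pairing, so I would instead apply the embedding to $g=|f|^{s}$ and then interpolate; iterating closes the argument. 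Equivalently, and more cleanly, I would use Sobolev $\dot H^{1-2/p}\hookrightarrow L^p$ together with the interpolation $\|f\|_{\dot H^{1-2/p}}\le \|f\|_{L^2}^{2/p}\|\na f\|_{L^2}^{1-2/p}$, which follows from H\"older in Fourier variables. Raising to the $p$-th power yields \eqref{g1}.

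\emph{Proof of \eqref{g2}.} Let $\theta=2r/(2r+q(r-2))$. Scaling again confirms consistency: $\|g\|_{L^q}\sim\lambda^{-2/q}$ and $\|\na g\|_{L^r}\sim\lambda^{1-2/r}$, and $-\frac2q(1-\theta)+(1-\frac2r)\theta=0$ holds exactly for this $\theta$. The argument has three steps.
\begin{itemize}
\item Since $r>2$, Morrey's inequality gives the H\"older bound $|g(x)-g(y)|\le C|x-y|^{1-2/r}\|\na g\|_{L^r}$; in particular $g$ is continuous.
\item Fix $x$ and a radius $R>0$. Average $|g(x)|\le |g(y)|+CR^{1-2/r}\|\na g\|_{L^r}$ over $y\in B_R(x)$, and use H\"older on the average of $|g|$. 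This gives
\[
|g(x)|\le CR^{-2/q}\|g\|_{L^q}+CR^{1-2/r}\|\na g\|_{L^r}.
\]
\item Choose $R$ to balance the two terms, namely $R^{1-2/r+2/q}=\|g\|_{L^q}/\|\na g\|_{L^r}$. This yields exactly the product with exponents $1-\theta=q(r-2)/(2r+q(r-2))$ and $\theta$.
\end{itemize}
The condition $g\in L^q$ with $q<\infty$ forces $g$ to vanish at infinity, so $g\in C(\ol{\O})$ with the stated bound.

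The only delicate point is the $p$-dependence of the constant in \eqref{g1}, which is allowed here. Since this is classical (Nirenberg), in the paper it suffices to cite \cite{nir}; the above is the verification I would record.
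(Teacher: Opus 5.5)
Your proposal is correct. It differs from the paper in kind: the paper gives no argument and simply quotes both inequalities from Nirenberg's general Gagliardo--Nirenberg theorem \cite{nir}.

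You instead derive the two special cases directly. For \eqref{g1} you use the fractional Sobolev embedding $\dot H^{1-2/p}(\mathbb{R}^2)\hookrightarrow L^p$ together with H\"older's inequality in frequency. For \eqref{g2} you use Morrey's estimate, average over $B_R(x)$, and optimize in $R$. The degenerate case $\|\nabla g\|_{L^r}=0$ is trivial, since then $g$ is a constant in $L^q$, hence zero.

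Your route makes it visible that the exponents are forced by scaling. It also explains why $g$ is continuous and vanishes at infinity, which is what gives meaning to $\|g\|_{C(\overline{\mathbb{R}^2})}$ on the whole plane. The citation gives brevity and the whole Gagliardo--Nirenberg family at once.

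Your argument for \eqref{g1} is not more elementary than the citation, since $\dot H^{s}\hookrightarrow L^{2/(1-s)}$ is itself a Hardy--Littlewood--Sobolev-type theorem. If you want an elementary proof, the first route you sketched and then left vague closes cleanly. The bound $\|f\|_{L^p}^p\le C\|f\|_{L^{p-2}}^{p-2}\|\nabla f\|_{L^2}^2$ is not the wrong pairing; it is a recursion. For $p=2k$ it telescopes down to $\|f\|_{L^{2k}}^{2k}\le C\|f\|_{L^2}^2\|\nabla f\|_{L^2}^{2k-2}$. For general $p$, interpolate by H\"older between $L^2$ and $L^{2k}$ with $2k\ge p$; homogeneity and scaling again force the exponents $2$ and $p-2$. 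This version uses only $W^{1,1}(\mathbb{R}^2)\hookrightarrow L^2(\mathbb{R}^2)$, on the same elementary footing as your Morrey-based proof of \eqref{g2}.
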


The following weighted $L^p$ bounds for elements of the Hilbert space $  D^{1}(\O)  $ can be found in \cite[Theorem B.1]{L2}.
\begin{lemma} \la{1leo}
   For   $p\in [2,\infty)$ and $\theta\in (1+p/2,\infty),$ there exists a positive constant $C$ such that we have for all $v\in  D^{1,2}(\O),$ \be\la{3h} \left(\int_{\O} \frac{|v|^p}{e+|x|^2}(\log (e+|x|^2))^{-\theta}dx  \right)^{1/p}\le C\|v\|_{L^2(B_1)}+C\|\na v\|_{L^2(\O) }.\ee
\end{lemma}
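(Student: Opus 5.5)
This is a known result, so the plan only sketches the standard proof, reducing the inequality to Hardy-type bounds.

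\emph{Step 1: reduction to $p=2$ and the structure of the weight.} It suffices to prove the estimate for $v\in C_c^\infty(\O)$ by density. Write $w(x)=(e+|x|^2)^{-1}(\log(e+|x|^2))^{-\theta}$. Decompose $v=(v-\bar v)+\bar v$, where $\bar v$ is the average of $v$ over $B_1$. Since $w\in L^1(\O)$ (this needs only $\theta>1$), the constant part contributes
\[
|\bar v|\,\|w\|_{L^1}^{1/p}\le C\|v\|_{L^2(B_1)}.
\]
So we may assume $\int_{B_1}v\,dx=0$. In that case Poincaré gives $\|v\|_{L^2(B_2)}\le C\|\na v\|_{L^2}$.

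\emph{Step 2: dyadic decomposition.} Let $A_k=\{2^{k-1}\le|x|<2^k\}$ for $k\ge1$, and let $v_k$ be the average of $v$ over $A_k$. Rescaling $A_k$ to a fixed annulus, the 2D Sobolev embedding $H^1\hookrightarrow L^p$ is scale invariant for the gradient part. It gives
\[
\Big(\int_{A_k}|v-v_k|^p\,dx\Big)^{1/p}\le C\,2^{2k/p}\|\na v\|_{L^2(\tilde A_k)},
\]
where $\tilde A_k$ is a slightly enlarged annulus. On $A_k$ the weight satisfies $w\sim 2^{-2k}k^{-\theta}$. Hence the oscillation part contributes
\[
\sum_k k^{-\theta}\,\|\na v\|_{L^2(\tilde A_k)}^p\le C\|\na v\|_{L^2}^p .
\]
For the averages, comparing neighbouring annuli with Poincaré gives $|v_{k+1}-v_k|\le C\|\na v\|_{L^2(\tilde A_k)}$. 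Together with $|v_1|\le C\|\na v\|_{L^2}$ (using the zero mean on $B_1$ and $B_2$), Cauchy--Schwarz then yields
\[
|v_k|\le C\sqrt{k}\,\|\na v\|_{L^2}.
\]
This logarithmic growth is the key fact: in 2D, $D^{1,2}$ functions may grow like $\sqrt{\log|x|}$.

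\emph{Step 3: summing the averages.} The contribution of the averages is
\[
\sum_k |v_k|^p\int_{A_k}w\,dx\le C\|\na v\|_{L^2}^p\sum_k k^{p/2}k^{-\theta}.
\]
This series converges exactly when $\theta-p/2>1$, that is, $\theta>1+p/2$, which is the hypothesis. The ball $B_1$ is handled directly by the $p=2$ Sobolev/Poincaré estimate on a bounded domain. Taking $p$-th roots gives \eqref{3h}.

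\emph{Main obstacle.} The main obstacle is Step 2's sharp $\sqrt{k}$ growth of the dyadic averages. It must be obtained via Cauchy--Schwarz on the telescoping sum and not the crude bound $k\,\|\na v\|_{L^2}$, since the crude bound would require the stronger condition $\theta>1+p$.
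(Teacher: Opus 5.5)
Your proof is correct. The paper gives no proof of this lemma. It imports the statement verbatim from Lions (Theorem B.1 of the reference cited just before it), so your dyadic argument replaces a citation rather than varying an argument in the text.

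Your route makes the role of the hypothesis visible. On each annulus $A_k$, the oscillation $v-v_k$ is controlled by the scale-invariant quantity $\|\nabla v\|_{L^2}$, via the Sobolev--Poincar\'e inequality on one fixed annulus. That part, like the constant $\bar v$, needs only $\theta>1$. The condition $\theta>1+p/2$ enters only through the growth $|v_k|\le C\sqrt{k}\,\|\nabla v\|_{L^2}$ of the annular means, obtained by telescoping and Cauchy--Schwarz. The same picture shows the exponent cannot be lowered below $1+p/2$. The function $v=(\log\max\{|x|,e\})^{1/2-\delta}$ lies in $D^{1,2}$ for every $\delta>0$, yet the left-hand side is infinite whenever $\theta\le 1+p/2-p\delta$. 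The citation, for its part, keeps the paper short for a classical fact.

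Three small points to tidy up:
\begin{itemize}
\item The heading of Step 1 announces a ``reduction to $p=2$'' that never happens; Step 1 is the reduction to $\int_{B_1}v\,dx=0$.
\item The density reduction to $C_c^\infty$ is unnecessary, and in two dimensions it is not automatic. Every inequality you use (Poincar\'e and Sobolev--Poincar\'e on $B_1$, $B_2$ and on annuli) applies directly to any $v\in H^1_{\rm loc}$ with $\nabla v\in L^2$, which is exactly $D^{1,2}(\mathbb{R}^2)$.
\item The enlarged annuli $\tilde A_k$ must contain $A_k\cup A_{k+1}$ and have bounded overlap. That is what the comparison of neighbouring means and the Cauchy--Schwarz step require.
\end{itemize}
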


The following lemma was deduced in \cite{Li201977}, we only state it here without proof.

\begin{lemma}\la{lemma2.6} For $\bar x$   as in \eqref{2.07},
suppose that $\n  \in L^\infty(\O)$ is a   function such that
\be \la{2.12}  0\le \n\le M_1, \quad M_2\le \int_{\b}\n dx ,\quad \n \bar x^\alpha \in L^1(\r^2),\ee
for $ N_*\ge 1 $ and positive constants $   M_1,M_2, $  and   $\al.$  Then, for $r\in [2,\infty),$ there exists a positive constant $C$ depending only on $  M_1, M_2, \alpha,   $ and $ r$  such that
 \be\la{z.1}\left(\int_{\r^2}\n |v |^r dx\right)^{1/r}  \le C  N_*^3  (1+\|\n\bar x^\al\|_{L^1(\r^2)})  \left(  \|\n^{1/2} v\|_{L^2(\r^2)} + \|\na  v \|_{L^2(\r^2)}\right) ,\ee for each $v\in \left.\left\{v\in D^1 (\O)\right|\n^{1/2}v\in L^2(\r^2) \right\}.$

\end{lemma}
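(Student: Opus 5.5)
The plan is to reduce \eqref{z.1} to a weighted $L^p$ bound, using Lemma \ref{1leo} together with a Poincaré-type control of $\|v\|_{L^2(B_1)}$ by $\|\n^{1/2}v\|_{L^2}+\|\na v\|_{L^2}$. The mass lower bound $M_2\le\int_{\b}\n\,dx$ is what drives the Poincaré step.

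\textbf{Step 1 (local $L^2$ control).} Set $\bar v=\frac{1}{|\b|}\int_{\b}v\,dx$. We write
\[
M_2|\bar v|\le \Big|\int_{\b}\n v\,dx\Big|+\int_{\b}\n|v-\bar v|\,dx .
\]
The first term is at most $\|\n\|_{L^1}^{1/2}\|\n^{1/2}v\|_{L^2}$, and $\|\n\|_{L^1}\le\|\n\bar x^\al\|_{L^1}$ since $\bar x\ge 1$. For the second term we use Hölder, $\n\le M_1$, and the Poincaré inequality on $\b$, which give $\|v-\bar v\|_{L^2(\b)}\le CN_*\|\na v\|_{L^2}$. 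This yields
\[
|\bar v|\le C N_*(1+\|\n\bar x^\al\|_{L^1})\big(\|\n^{1/2}v\|_{L^2}+\|\na v\|_{L^2}\big).
\]
Since $B_1\subset\b$,
\[
\|v\|_{L^2(B_1)}\le C|\bar v|+\|v-\bar v\|_{L^2(\b)}
\]
is bounded by $CN_*(1+\|\n\bar x^\al\|_{L^1})(\|\n^{1/2}v\|_{L^2}+\|\na v\|_{L^2})$. Combining this with \eqref{3h}, we obtain for every $p\ge 2$
\[
\|v\bar x^{-\delta}\|_{L^p}\le C N_*(1+\|\n\bar x^\al\|_{L^1})\big(\|\n^{1/2}v\|_{L^2}+\|\na v\|_{L^2}\big)
\]
for a suitable $\delta>0$. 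Indeed, with $\theta$ chosen so that the log weight matches, $\bar x^{-p\delta}\le (e+|x|^2)^{-1}\log^{-\theta}(e+|x|^2)$ holds once $p\delta\ge 2$, i.e. $\delta\ge 2/p$, up to constants.

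\textbf{Step 2 (transfer the weight to $\n$).} We write
\[
\int\n|v|^r=\int \n\,\bar x^{r\delta}\,|v\bar x^{-\delta}|^r .
\]
By Hölder with exponents $s$ and $s'$,
\[
\int\n|v|^r\le \|\n\bar x^{r\delta}\|_{L^{s'}}\,\|v\bar x^{-\delta}\|^r_{L^{rs}} .
\]
We choose $p=rs$ large, so that $\delta=2/(rs)$ is small. Then $r\delta s'\le\al$ is achievable: pick $s$ large enough that $2s'/s\le\al$, which is possible since $s'\to1$. Using $\n\le M_1$, we get $\|\n\bar x^{r\delta}\|_{L^{s'}}^{s'}\le M_1^{s'-1}\int\n\bar x^{\al}$. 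Taking $r$-th roots gives \eqref{z.1} with a factor $N_*$ times a power of $(1+\|\n\bar x^\al\|_{L^1})$, which we bound by the stated form. The stated $N_*^3$ leaves room for any extra $N_*$ factors, e.g. from $|\b|^{-1/2}$ normalizations.

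\textbf{Main obstacle.} The delicate point is Step 1: getting the mean value bound with only $\int_{\b}\n\ge M_2$, while keeping the $N_*$ dependence polynomial. Here the $L^2$ Hölder step on $\int_{\b}\n|v-\bar v|$ gives a factor $M_1^{1/2}|\b|^{1/2}\sim N_*$, and the Poincaré constant gives another factor $N_*$. Together with the weight conversion on the ball (where $\bar x\lesssim N_*\log^2 N_*$), this yields at most $N_*^3$. The remaining exponent bookkeeping between $\delta$, $\theta$ and $\al$ is routine.
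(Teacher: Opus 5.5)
The paper does not prove this lemma; it quotes it from Li--Xin. Your route is the standard one for this estimate: control the mean of $v$ on $B_{N_*}$ through $\int_{B_{N_*}}\rho\ge M_2$, apply Poincar\'e on $B_{N_*}$, then Lemma~\ref{1leo}, then H\"older to move the weight onto $\rho\bar x^\alpha$. As written, however, two steps do not go through. Both can be repaired within your approach.

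\textbf{Exponent of $\|\rho\bar x^\alpha\|_{L^1}$.} Step 1 already produces the factor $(1+\|\rho\bar x^\alpha\|_{L^1})$. Step 2 then multiplies it by $\|\rho\bar x^{r\delta}\|_{L^{s'}}^{1/r}\le M_1^{1/(rs)}\|\rho\bar x^\alpha\|_{L^1}^{1/(rs')}$. You therefore end with $(1+\|\rho\bar x^\alpha\|_{L^1})^{1+1/(rs')}$.

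This cannot be bounded by $C(1+\|\rho\bar x^\alpha\|_{L^1})$ with $C=C(M_1,M_2,\alpha,r)$, because $\|\rho\bar x^\alpha\|_{L^1}$ is not controlled by those constants. Linear dependence is exactly what the statement asserts.

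The fix is to avoid $\|\rho\|_{L^1}$ in Step 1. Use instead $\int_{B_{N_*}}\rho\le \pi M_1N_*^2$ and $\|\rho\|_{L^2(B_{N_*})}\le M_1|B_{N_*}|^{1/2}$. Then
\[
\|v\|_{L^2(B_1)}\le CN_*^2\bigl(\|\rho^{1/2}v\|_{L^2}+\|\nabla v\|_{L^2}\bigr)
\]
with no dependence on $\rho\bar x^\alpha$. Step 2 then contributes only $\|\rho\bar x^\alpha\|_{L^1}^{1/(rs')}\le 1+\|\rho\bar x^\alpha\|_{L^1}$, and you land on $CN_*^2(1+\|\rho\bar x^\alpha\|_{L^1})$.

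Alternatively, keep the exact power $\|\rho\|_{L^1}^{1/2}$ that Cauchy--Schwarz gives, rather than inflating it. Since $\frac12+\frac1{rs'}<1$, this also closes, even with $N_*$ in place of $N_*^3$.

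\textbf{Choice of $\delta$.} Taking $\delta=2/(rs)$ is the borderline case, and it fails. With $p=rs$ you get $\bar x^{-p\delta}=\bar x^{-2}=(e+|x|^2)^{-1}\log^{-4}(e+|x|^2)$. Lemma~\ref{1leo}, however, needs the logarithmic power $\theta>1+p/2$, which exceeds $4$ once $p\ge 6$, and you need $p=rs$ large.

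Take $p\delta>2$ strictly, for instance $\delta=3/(rs)$. The extra factor $(e+|x|^2)^{-1/2}$ then absorbs any power of the logarithm, so $\bar x^{-p\delta}\le C(\theta)(e+|x|^2)^{-1}\log^{-\theta}(e+|x|^2)$ for every $\theta$. The constraint $r\delta s'\le\alpha$ becomes $3/(s-1)\le\alpha$, which still holds for $s$ large.

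With these two corrections your argument proves the lemma.
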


Next, we set $ \nabla^{\perp}\triangleq (-\p_2,\p_1),$ $\dot f\triangleq f_t+u\cdot\nabla f,$ where $\dot f$ denoting the material derivative of $f $. We state some elementary estimates which follow from (\ref{g1}) and the standard $L^p$-estimate  for the following elliptic system derived from the momentum equations in (\ref{a1}):
\be\la{h13}
\triangle F = \text{div}(\n\dot{u}),\quad\mu \triangle \o =
\nabla^\perp\cdot(\n\dot{u}) ,
\ee
where $F$ and $\omega$ are as in (\ref{hj1}).

\begin{lemma} \la{le3}
  Let $(\n,u,m)$ be a smooth solution of
   (\ref{a1}).
    Then for   $2\le p<\infty$ there exists a   positive
   constant $C$ depending only on $p,\mu,$ and $\lambda$ such that
\begin{gather}
\|{\nabla F}\|_{L^p} + \|{\nabla \o}\|_{L^p}\le C\norm[L^p]{\n\dot{u}},\label{h19}\\
\norm[L^p]{F} + \norm[L^p]{\o}\le C \norm[L^2]{\n\dot{u}}^{1-2/p }\left(\norm[L^2]{\nabla u} + \norm[L^2]{P }+\|m^2\|_{L^2}\right)^{2/p},\la{h20}
\end{gather}
\begin{equation}
\begin{split}
\norm[L^p]{\nabla u} \le& C \norm[L^2]{\n\dot{u}}^{1-2/p }\left(\norm[L^2]{\nabla u}
   + \norm[L^2]{P }+\|m^2\|_{L^2}\right)^{2/p}\\
   &+ C\norm[L^p]{P}+C\|m^2\|_{L^p}. \la{h18}
\end{split}
\end{equation}
\end{lemma}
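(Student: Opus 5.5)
The plan is to derive everything from the two elliptic equations \eqref{h13} together with the definitions \eqref{hj1} of $F$ and $\o$.

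First, \eqref{h13} itself. Write the momentum equation in \eqref{a1} as $\n\dot u=\mu\Delta u+(\mu+\lambda)\na\div u-\na(P+m^2)$. Using the two-dimensional identity $\Delta u=\na\div u+\na^\perp\o$, this becomes
\begin{equation*}
\n\dot u=\na F+\mu\na^\perp\o .
\end{equation*}
Taking the divergence kills $\na^\perp\o$ and gives $\Delta F=\div(\n\dot u)$. Taking $\na^\perp\cdot$ kills $\na F$ and gives $\mu\Delta\o=\na^\perp\cdot(\n\dot u)$.

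Proof of \eqref{h19}. Since $F$ and $\o$ decay at infinity, we may write $\na F=\na\Delta^{-1}\div(\n\dot u)$ and $\na\o=\mu^{-1}\na\Delta^{-1}\na^\perp\cdot(\n\dot u)$. Both operators are compositions of Riesz transforms. The Calderón--Zygmund $L^p$-boundedness of Riesz transforms for $1<p<\infty$ then gives \eqref{h19}.

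Proof of \eqref{h20}. Apply \eqref{g1} to $F$ and to $\o$:
\begin{equation*}
\|F\|_{L^p}\le C\|F\|_{L^2}^{2/p}\|\na F\|_{L^2}^{1-2/p}.
\end{equation*}
By \eqref{h19} with $p=2$, $\|\na F\|_{L^2}\le C\|\n\dot u\|_{L^2}$. From \eqref{hj1}, $\|F\|_{L^2}\le C(\|\na u\|_{L^2}+\|P\|_{L^2}+\|m^2\|_{L^2})$. The same argument applies to $\o$, with the simpler bound $\|\o\|_{L^2}\le C\|\na u\|_{L^2}$. Combining these gives \eqref{h20}.

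Proof of \eqref{h18}. Decompose $u$ through its divergence and curl: $-\Delta u=-\na\div u-\na^\perp\o$. This yields $\na u=-\na\Delta^{-1}\na\div u-\na\Delta^{-1}\na^\perp\o$, so again by Riesz-transform boundedness
\begin{equation*}
\|\na u\|_{L^p}\le C(\|\div u\|_{L^p}+\|\o\|_{L^p}).
\end{equation*}
Substitute $\div u=(2\mu+\lambda)^{-1}(F+P+m^2)$. This gives $\|\na u\|_{L^p}\le C(\|F\|_{L^p}+\|\o\|_{L^p}+\|P\|_{L^p}+\|m^2\|_{L^p})$. Inserting \eqref{h20} yields \eqref{h18}.

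The only delicate point is justifying the Riesz-transform representations, i.e.\ that $F$, $\o$ and $u$ are recovered from their Laplacians with no harmonic remainder. For smooth solutions these functions vanish at infinity and lie in the relevant $L^2$-based spaces. Hence any harmonic difference is a harmonic function in $L^2$ or $\dot H^1$, and by Liouville it is zero (or constant, which does not affect $\na u$). I expect this step to be routine. All constants depend only on $p,\mu,\lambda$.
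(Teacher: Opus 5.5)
Your proposal is correct and follows essentially the argument the paper intends. The paper gives no separate proof, only stating that these bounds follow from the Gagliardo--Nirenberg inequality \eqref{g1} and standard $L^p$ elliptic estimates for the system \eqref{h13}. You fill in exactly those standard steps: the identity $\n\dot u=\na F+\mu\na^\perp\o$, the Riesz-transform bounds, the interpolation, and the div--curl bound $\|\na u\|_{L^p}\le C(\|\div u\|_{L^p}+\|\o\|_{L^p})$ combined with $\div u=(2\mu+\lambda)^{-1}(F+P+m^2)$.
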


The following Bourgain-Brezis divergence estimate follows from \cite[Theorem 5']{BB2007} and will be used later to estimate $\|P+m^2\|_{L^2}.$
\begin{lemma}\label{lem00}
For any $f\in L^2$, there exists a vector field $\Phi\in L^\infty,\ \nabla\Phi\in L^2,$ such that, in the sense of distributions,
\begin{equation}
\div\Phi=f,
\end{equation}
and
\begin{equation}
\norm[L^\infty]{\Phi}+\norm[L^2]{\nabla\Phi}
\le C\norm[L^2]{f},
\end{equation}
where $C>0$ is a positive constant.
\end{lemma}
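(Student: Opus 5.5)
This is the Bourgain--Brezis result, quoted from \cite[Theorem 5']{BB2007}. Since $L^2(\O)\subset\dot H^{-1}(\O)$ fails, the statement needs care, but in two dimensions the target space has the scaling of $\dot W^{1,2}\cap L^\infty$. I would therefore prove it by reducing to the Bourgain--Brezis theorem in its original form: for every $f\in L^2(\O)$ (the critical space $L^n$ with $n=2$) there exists $\Phi\in W^{1,2}_{\rm loc}$ with $\na\Phi\in L^2$, $\Phi\in L^\infty$ and $\div\Phi=f$.

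The first step is the duality formulation. By the closed range theorem, the map $T:\{\Phi:\ \Phi\in L^\infty,\ \na\Phi\in L^2\}\to L^2$, $T\Phi=\div\Phi$, is onto with the stated norm bound if and only if the adjoint estimate holds. That estimate reads: for $g\in L^2$,
$$\|g\|_{L^2}\le C\|\na g\|_{(L^\infty\cap \dot W^{1,2})^*},$$
i.e.
$$\Big|\int g\,\div\Phi\,dx\Big|\ge c\|g\|_{L^2}$$
for some $\Phi$ in the unit ball. Equivalently, it suffices to show
$$\|g\|_{L^2}\le C\big(\|\na g\|_{\dot W^{-1,2}}+\|\na g\|_{L^1+\dot W^{-1,2}}\big)\text{-type norm}.$$
This is the key inequality of Bourgain--Brezis: for vector fields $\na g$ one has the estimate $\|g\|_{L^2}\le C\|\na g\|_{L^1+\dot W^{-1,2}}$ in two dimensions, and this is what fails for general fields. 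Here the Sobolev embedding $\dot W^{1,1}\subset L^2$ handles the $L^1$ part and the $\dot H^{-1}$ part is trivial.

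The second step is to turn the a priori estimate into existence of $\Phi$ with $\|\Phi\|_{L^\infty}+\|\na\Phi\|_{L^2}\le C\|f\|_{L^2}$. For this I would use Hahn--Banach and the reflexivity of the $L^2$ side: the estimate gives injectivity with closed range for the adjoint, hence surjectivity of $T$ with a bounded right inverse (nonlinear in general). Distributional validity of $\div\Phi=f$ follows by testing against $C_c^\infty$ functions. The main obstacle is the loss of compactness in $L^\infty$. I would deal with it by working with weak-$*$ limits, using that the unit ball of $L^\infty\cap\dot W^{1,2}$ (modulo constants) is weak-$*$ closed. Since the paper explicitly relies on \cite{BB2007}, I would ultimately cite Theorem~5$'$ there for the hard estimate rather than reprove the Bourgain--Brezis inequality.
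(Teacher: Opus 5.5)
In substance your proposal is the paper's proof. The only real input is \cite[Theorem 5']{BB2007}, which the paper simply quotes, and the duality wrapper you put around it is standard but unnecessary. The ``original form'' you reduce to in your first paragraph is already the qualitative version of the lemma. Once $T\Phi=\div\Phi$ is known to map the Banach space $X=\{\Phi\in L^\infty:\ \nabla\Phi\in L^2\}$, normed by $\|\Phi\|_{L^\infty}+\|\nabla\Phi\|_{L^2}$, onto $L^2$, the open mapping theorem gives the constant $C$ directly. No weak-$*$ compactness argument is needed.

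What is genuinely wrong is your justification of the key dual estimate $\|g\|_{L^2}\le C\|\nabla g\|_{L^1+\dot W^{-1,2}}$. A decomposition $\nabla g=h_1+h_2$ with $h_1\in L^1$ and $h_2\in\dot W^{-1,2}$ does not split $g$ into two pieces you can treat separately, because $h_1$ and $h_2$ need not be curl-free.

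\begin{itemize}
\item The part of $g$ generated by $h_1$ is $\Delta^{-1}\div h_1=\nabla\Gamma*h_1$, with $|\nabla\Gamma(x)|=(2\pi|x|)^{-1}$. This maps $L^1$ only into weak-$L^2$, so the embedding $\dot W^{1,1}\subset L^2$ does not apply to it.
\item Already a smooth bump $h_1=\varphi e_1$ with $\int\varphi=1$ shows this: it produces a function behaving like $x_1/(2\pi|x|^2)$ at infinity, which is not in $L^2$.
\end{itemize}

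The inequality holds only because the sum $h_1+h_2$ is a gradient although the summands are not. That is precisely the deep content of Bourgain--Brezis, and it is closely tied to the non-existence of a bounded linear solution operator.

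There is also a gap in how you connect the two norms. The adjoint bound you actually need is $\|g\|_{L^2}\le C\|\nabla g\|_{X^*}$. A priori $\|\nabla g\|_{X^*}$ can be smaller than $\|\nabla g\|_{L^1+\dot W^{-1,2}}$, since $(L^\infty)^*\supsetneq L^1$. So deducing the bound you need from the $L^1+\dot W^{-1,2}$ inequality requires an additional approximation argument, not just the phrase ``-type norm''.

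Your argument therefore stands only as a citation, exactly as in the paper. The heuristic you give for the hard estimate would fail as a proof step and should be removed.
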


Next,  in order to get the uniform-in-time upper bound for $\rho$ and $m,$ we need the following Zlotnik inequality.
\begin{lemma}[\cite{zl1}]\la{le1}   Let the function $y$ satisfy
\bnn y'(t)= g(y)+b'(t) \mbox{  on  } [0,T] ,\quad y(0)=y^0, \enn
with $ g\in C(\r)$ and $y,b\in W^{1,1}(0,T).$ If $g(\infty)=-\infty$
and \be\la{a100} b(t_2) -b(t_1) \le N_0 +N_1(t_2-t_1)\ee for all
$0\le t_1<t_2\le T$
  with some $N_0\ge 0$ and $N_1\ge 0,$ then
\bnn y(t)\le \max\left\{y^0,\overline{\zeta} \right\}+N_0<\infty
\mbox{ on
 } [0,T],
\enn
  where $\overline{\zeta} $ is a constant such
that \be\la{a101} g(\zeta)\le -N_1 \quad\mbox{ for }\quad \zeta\ge \overline{\zeta}.\ee
\end{lemma}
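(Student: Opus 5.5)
We argue directly by a continuity (``last exit time'') argument, using only that $y,b\in W^{1,1}(0,T)$ are absolutely continuous on $[0,T]$, so that the fundamental theorem of calculus holds for them. First, since $g\in C(\r)$ and $g(\infty)=-\infty$, there is a finite $\overline{\zeta}$ satisfying \eqref{a101}. Put $K\triangleq\max\{y^0,\overline{\zeta}\}$; then $K<\infty$. Since $y$ is continuous on the compact interval $[0,T]$, the composition $g(y(\cdot))$ is continuous and bounded there, hence integrable. Integrating the equation therefore gives, for all $0\le t_1<t_2\le T$,
\begin{equation*}
y(t_2)-y(t_1)=\int_{t_1}^{t_2} g(y(s))\,ds+b(t_2)-b(t_1).
\end{equation*}

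Fix $t\in[0,T]$; we show $y(t)\le K+N_0$. If $y(t)\le K$ there is nothing to prove, because $N_0\ge0$. Otherwise $y(t)>K$, and we set
\begin{equation*}
t_1\triangleq\sup\{s\in[0,t]:\ y(s)\le K\}.
\end{equation*}
This set is nonempty because $y(0)=y^0\le K$. By continuity of $y$ we get $y(t_1)\le K$, and hence $t_1<t$. We also get $y(s)>K\ge\overline{\zeta}$ for every $s\in(t_1,t]$, so \eqref{a101} gives $g(y(s))\le -N_1$ on $(t_1,t]$.

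Inserting this and \eqref{a100} into the integrated identity on $[t_1,t]$ yields
\begin{equation*}
y(t)-y(t_1)\le -N_1(t-t_1)+N_0+N_1(t-t_1)=N_0.
\end{equation*}
Hence $y(t)\le y(t_1)+N_0\le K+N_0$, which is the claimed bound $y(t)\le\max\{y^0,\overline{\zeta}\}+N_0<\infty$ on $[0,T]$.

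The only point needing care is the choice of $t_1$ as the \emph{last} time before $t$ at which $y\le K$. This choice is what allows the negative drift $-N_1(t-t_1)$ coming from $g$ to exactly cancel the linear growth $N_1(t-t_1)$ permitted for $b$ in \eqref{a100}, leaving only the additive constant $N_0$. The rest of the argument is routine.
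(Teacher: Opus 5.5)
Your proof is correct and complete. The paper gives no proof of this lemma; it quotes it from Zlotnik. Your last-exit-time argument is essentially the standard proof of that inequality: you take $t_1$ as the largest time in $[0,t]$ with $y\le\max\{y^0,\overline{\zeta}\}$, so that $g(y)\le -N_1$ on $(t_1,t]$ exactly cancels the $N_1(t-t_1)$ allowance in \eqref{a100}, leaving only $N_0$.
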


Finally,    the following Beale-Kato-Majda type inequality,
which was proved in \cite{bkm} when $\div u\equiv 0,$   will be
used later to estimate $\|\nabla u\|_{L^\infty}$,
$\|\nabla\n\|_{L^2\cap L^q} (q>2)$  and
$\|\nabla m\|_{L^2\cap L^q} (q>2)$.
\begin{lemma}   \la{le9}  For $2<q<\infty,$ there is a
constant  $C(q)$ such that  the following estimate holds for all
$\na u\in L^2(\O)\cap D^{1,q}({\O }),$ \bnn \la{ww7}\ba \|\na
u\|_{L^\infty({\O })}&\le C\left(\|{\rm div}u\|_{L^\infty({\O })}+
\|\o\|_{L^\infty({\O })} \right)\log(e+\|\na^2
u\|_{L^q({\O })}) +C\|\na u\|_{L^2(\O)}+C. \ea\enn
\end{lemma}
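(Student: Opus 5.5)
The statement just given is the Beale--Kato--Majda type inequality, Lemma \ref{le9}, which involves only the velocity field $u$ on $\O$. My plan is to follow the classical Beale--Kato--Majda argument, adapted to a field that is not divergence free. First write $u$ through its divergence and vorticity. Since $\Delta u=\na\div u+\na^\perp\o$, we have
\[
\na u=\na\Delta^{-1}\na\,\div u+\na\Delta^{-1}\na^\perp\o ,
\]
where each operator $\na\Delta^{-1}\na$ and $\na\Delta^{-1}\na^\perp$ is a matrix of Riesz transforms $\mathcal R_i\mathcal R_j$. So it suffices to prove, for a Calder\'on--Zygmund operator $T=\mathcal R_i\mathcal R_j$ and $f\in\{\div u,\o\}$,
\[
\|Tf\|_{L^\infty}\le C\|f\|_{L^\infty}\log\bigl(e+\|\na f\|_{L^q}\bigr)+C\|f\|_{L^2}+C .
\]
We then use $\|f\|_{L^2}\le C\|\na u\|_{L^2}$ and $\|\na f\|_{L^q}\le C\|\na^2u\|_{L^q}$ to conclude.

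To prove this reduced estimate, fix $x_0$ and write $Tf(x_0)$ as a principal value integral $\mathrm{p.v.}\int K(x_0-y)f(y)\,dy$ with $|K(z)|\le C|z|^{-2}$, $|\na K(z)|\le C|z|^{-3}$, and $K$ having zero mean on circles (plus a multiple of $f(x_0)$, which is bounded by $\|f\|_{L^\infty}$). Split the region at radii $0<\delta<1$.

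\textbf{Inner region $|x_0-y|<\delta$.} Use the cancellation to replace $f(y)$ by $f(y)-f(x_0)$. Morrey's embedding gives $|f(y)-f(x_0)|\le C|y-x_0|^{1-2/q}\|\na f\|_{L^q}$, so this part is at most $C\delta^{1-2/q}\|\na f\|_{L^q}$.

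\textbf{Middle region $\delta\le|x_0-y|\le1$.} Bound crudely by $C\|f\|_{L^\infty}\int_\delta^1 r^{-1}dr=C\|f\|_{L^\infty}\log(1/\delta)$.

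\textbf{Outer region $|x_0-y|>1$.} Cauchy--Schwarz gives $C\|K\|_{L^2(|z|>1)}\|f\|_{L^2}\le C\|f\|_{L^2}$.

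Finally choose $\delta=\min\{1,(e+\|\na f\|_{L^q})^{-q/(q-2)}\}$. Then the inner term is at most $C$, and $\log(1/\delta)\le C(q)\log(e+\|\na f\|_{L^q})$, which gives the reduced estimate.

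The step needing the most care is justifying the kernel representation of $Tf$ with only $f\in L^2\cap L^\infty\cap D^{1,q}$. Such an $f$ is Hölder continuous by Morrey's embedding, so the principal value converges and agrees with the Fourier-multiplier definition on $L^2$. I would verify this by approximating with mollified $f$ and passing to the limit; the three bounds above are stable under mollification.

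A cleaner alternative is Littlewood--Paley: split $Tf=S_0Tf+\sum_{0\le j\le N}\Delta_jTf+\sum_{j>N}\Delta_jTf$. The low-frequency part is bounded by $C\|f\|_{L^2}$ by Bernstein. The middle sum is at most $CN\|f\|_{L^\infty}$. The high-frequency tail is at most $C2^{-N(1-2/q)}\|\na f\|_{L^q}$. Choosing $N\sim\log(e+\|\na f\|_{L^q})$ gives the same estimate, and I would present whichever is shorter.
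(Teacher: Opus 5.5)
Your proof is correct and is essentially the argument the paper relies on. The paper gives no proof of this lemma; it cites Beale--Kato--Majda, whose proof is exactly your splitting of the Calder\'on--Zygmund kernel into $|x_0-y|<\delta$, $\delta\le|x_0-y|\le 1$ and $|x_0-y|>1$, with $\delta$ a negative power of $e+\|\na f\|_{L^q}$. Your reduction $\na u=\na\Delta^{-1}\na\,\div u+\na\Delta^{-1}\na^\perp\o$ to Riesz transforms of $\div u$ and $\o$ is the standard way to drop the divergence-free hypothesis, and your bookkeeping ($\log(1/\delta)=\frac{q}{q-2}\log(e+\|\na f\|_{L^q})$, $\|\na f\|_{L^q}\le C\|\na^2u\|_{L^q}$, $\|f\|_{L^2}\le C\|\na u\|_{L^2}$) is in order.
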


\section{A priori estimates (I): lower order estimates}\la{se3}
In this section, we will establish some necessary a priori bounds for smooth solutions to the Cauchy problem (\ref{a1})-(\ref{h2}) to extend the local strong   solution guaranteed by Lemma \ref{th0}. Thus, let $T>0$ be a fixed time and $(\n,u,m)$ be the smooth solution to (\ref{a1})-(\ref{h2})  on
${\r}^2 \times (0,T]$  with smooth initial data $(\n_0,u_0,m_0)$ satisfying (\ref{co1}) and (\ref{2.07}).

Set $\si(t)\triangleq\min\{1,t \}.$  Define
 \begin{eqnarray}
A_1(T)&\triangleq&\sup_{0\le t\le T}\sigma\|\nabla u\|_{L^2}^2 + \int_0^{T} \sigma
 \|\n^{1/2}\dot{u} \|_{L^2}^2  dt,\la{As1}\\[2mm] A_2(T)&\triangleq&\sup_{0\le t\le T}\sigma^2\|\n^{1/2}\dot{u} \|_{L^2}^2 + \int_0^{T}\sigma^2\|\na \dot{u} \|_{L^2}^2dt.\la{As2}
\end{eqnarray}

We have the following key a priori estimates on $(\n,u,m)$.
\begin{proposition}\la{pr1}
Under the conditions of Theorem \ref{th1}, there exists some positive constant $\ve$ depending on $\mu, \lambda, \ga, a, \on, \bar m, \beta, N_0,$ and $M$ such that if $(\n,u,m)$ is a smooth solution of (\ref{a1})-(\ref{h2}) on $\O \times (0,T]$ satisfying
\be\la{z1}
\sup\limits_{\O \times [0,T]}\n\le 2\bar{\n},\quad \sup\limits_{\O \times [0,T]}|m|\le 2\bar{m},\quad A_1(T) + A_2(T) \le 2C_0^{1/2},
\ee
the following estimates hold
\be\la{z2}
\sup\limits_{\O \times [0,T]}\n\le 7\bar{\n}/4,\quad \sup\limits_{\O \times [0,T]}|m|\le 7\bar{m}/4, \quad A_1(T) + A_2(T) 
\le  C_0^{1/2},
\ee
provided $C_0\le \ve.$
\end{proposition}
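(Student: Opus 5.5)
The argument is a continuity (bootstrap) argument in the spirit of Li--Xin, carried out in four stages: basic energy, $A_1$ and $A_2$, the weighted bounds, and finally the pointwise bounds on $\n$ and $m$ via Lemma \ref{le1}. Throughout I work under the hypothesis \eqref{z1}.

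\textbf{Energy and total pressure.} First I derive the basic energy identity. Multiplying the momentum equation by $u$, and using $\eqref{a1}_3$ multiplied by $2m$ to handle $\nabla m^2\cdot u$, gives
\[
\sup_t\int\Big(\tfrac12\n|u|^2+\tfrac{P}{\ga-1}+m^2\Big)dx+\int_0^T\big(\mu\|\na u\|_{L^2}^2+(\mu+\lambda)\|\div u\|_{L^2}^2\big)dt\le C_0 .
\]
Next I bound the total pressure. Apply Lemma \ref{lem00} with $f=P+m^2$ and test \eqref{mini} against $\Phi$; this yields $\|P+m^2\|_{L^2}\le C\|\n^{1/2}\dot u\|_{L^2}+C\|\na u\|_{L^2}$, using $\n\le 2\on$ to pass from $\|\n\dot u\|_{L^2}$ to $\|\n^{1/2}\dot u\|_{L^2}$. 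Since $P,m^2\ge0$, this controls $\|P\|_{L^2}$ and $\|m^2\|_{L^2}$ separately. Combined with Lemma \ref{le3}, it lets me bound $\|\na u\|_{L^p}$, $\|F\|_{L^p}$ and $\|\o\|_{L^p}$ purely in terms of $\|\n^{1/2}\dot u\|_{L^2}$, $\|\na u\|_{L^2}$ and $C_0$.

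\textbf{The bounds on $A_1$ and $A_2$.} For $A_1$, I multiply the momentum equation by $\sigma\dot u$. The pressure term becomes $\int\sigma(P+m^2)_t\div u$, which I treat with the transport equations for $P$ and $m^2$. The main difficulty here is the initial layer near $t=0$. As in Li--Xin, I handle it with the $\dot H^\beta$ assumption on $u_0$: an interpolation argument for the linearized Lam\'e problem gives $\int_0^{\sigma(T)}t^{1-\beta}\|\na u\|_{L^2}^2\,dt\le C$. This yields $A_1\le CC_0+CA_1^{3/2}+\dots$. For $A_2$, I apply the operator $\sigma^2\dot u^j[\p_t+\div(u\,\cdot)]$ to the momentum equation, as in Hoff's method, and treat the $P+m^2$ commutator terms in the same way. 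Both bounds close to give $A_1+A_2\le C_0^{1/2}$ once $C_0$ is small.

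\textbf{Weights and the pointwise bounds.} For the weighted bounds I use Lemma \ref{lemma2.6} together with the spatial-moment estimates for $\int\n\bar x^a$ and $\int|m|\bar x^a$. The latter follows from the transport equation for $|m|$, and it grows at most polynomially in $t$. These estimates control $\int\n|u|^{2+\nu}$ and the companion term with $|m|$, and they also give the time-weighted estimates needed for large $t$.

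For the pointwise bounds I rewrite the mass equation along particle paths as $\frac{D}{Dt}\log\n=-\div u=-\frac{1}{2\mu+\lambda}(F+P+m^2)$. Dropping $-\frac{P+m^2}{2\mu+\lambda}\le-\frac{P}{2\mu+\lambda}$ leaves a decreasing $g(y)$ plus $b'(t)=-F/(2\mu+\lambda)$. The same argument applies to $\log|m|$ after dropping $-m^2$. To apply Lemma \ref{le1} I need $|b(t_2)-b(t_1)|\le N_0+N_1(t_2-t_1)$ with $N_0$ small, which I obtain by bounding $\int\|F\|_{L^\infty}$ through \eqref{g2}, \eqref{h19} and \eqref{h20}. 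On $[0,\sigma(T)]$ this bound is $CC_0^\delta$. On $[1,T]$ it becomes $\frac{P}{2\mu+\lambda}(t_2-t_1)$-type absorption with $\|F\|_{L^\infty}\le\epsilon+CC_0^\delta$. Taking $\bar\zeta$ appropriately then gives $\n\le7\on/4$ and $|m|\le 7\bar m/4$.

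I expect the main obstacle to be the large-time part of the $\int\|F\|_{L^\infty}$ bound: it needs time-integrable control of $\|\n\dot u\|_{L^p}$ while $m$ has no dissipation, and this is exactly where the weighted $|m|$ moment and the Bourgain--Brezis pressure bound must be combined.
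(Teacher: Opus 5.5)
There is a genuine gap in the short-time part of your Zlotnik step. It comes from attaching the $\dot H^\beta$ assumption to the wrong estimate.

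The bound on $A_1$ has no initial layer. The weight $\sigma$ already neutralizes $t=0$: integrating \eqref{n1} against $\sigma$ only produces $-\int_0^{\sigma(T)}B\,dt$, which \eqref{n2'} and \eqref{a16} bound by $CC_0$. The paper obtains $A_1(T)+A_2(T)\le C(\bar\rho,\bar m)C_0$ from Lemma \ref{le21}, Lemma \ref{le3} and \eqref{new1} alone, without using $u_0\in\dot H^\beta$. Moreover, the estimate you attribute to interpolation, $\int_0^{\sigma(T)}t^{1-\beta}\|\nabla u\|_{L^2}^2dt\le C$, follows immediately from \eqref{a16} and carries no information.

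Where $\dot H^\beta$ is indispensable is your claim that, via \eqref{g2}, \eqref{h19}, \eqref{h20}, the bound on $\int\|F\|_{L^\infty}$ is $CC_0^\delta$ on $[0,\sigma(T)]$. With only $A_1$, $A_2$ and the energy, near $t=0$ you have $\|F\|_{L^2}\lesssim t^{-1/2}$ and $\|\nabla F\|_{L^r}\lesssim t^{-1}g(t)$ with $g\in L^2(0,1)$. Then $\|F\|_{L^\infty}\le C\|F\|_{L^2}^{\theta}\|\nabla F\|_{L^r}^{1-\theta}$, with $\theta=\frac{r-2}{2(r-1)}$, leaves $\int_0^1t^{-(1-\theta/2)}g^{1-\theta}dt$. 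H\"older controls this only when $\theta>1/2$, but $\theta<1/2$ for every $r$. This is the scaling-critical logarithmic divergence of $L^2$-level information in two dimensions ($\|\nabla u\|_{L^\infty}\sim t^{-1}$), so no choice of exponents repairs it. Recall also that $\varepsilon$ may not depend on $\|\nabla u_0\|_{L^2}$.

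The missing ingredient is the pointwise-in-time weighted bound of Lemma \ref{zc1}:
\[
\sup_{0\le t\le\sigma(T)}t^{1-\beta}\|\nabla u\|_{L^2}^2+\int_0^{\sigma(T)}t^{1-\beta}\|\rho^{1/2}\dot u\|_{L^2}^2dt\le K(\bar\rho,\bar m,M).
\]
It is proved by Hoff's splitting $u=w_1+w_2$. Here $Lw_1=0$ with data $u_0$, estimated for $L^2$ and $\dot H^1$ data and then interpolated; $Lw_2=-\nabla(P+m^2)$ with zero data. Both parts need $\sup_t\int\rho|u|^{2+\nu}dx\le C$, which in turn needs the moment $\int|m|\bar x^a dx$ to control $\int m^4|u|^\nu dx$. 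You list these pieces but never feed the result into the density bound. Combining $\|F\|_{L^2}\lesssim t^{-(1-\beta)/2}$, $r=4+4/\beta$, the weighted bound \eqref{z.2} for $\|\nabla F\|_{L^r}$, and the smallness $\int_0^{\sigma(T)}\sigma^2\|\nabla\dot u\|_{L^2}^2dt\le CC_0$ from Lemma \ref{le5} gives $\int_0^{\sigma(T)}\rho|F|\,dt\le CC_0^{r/(4(r-1))}$.

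Your large-time step also needs correcting. It is not driven by the $|m|$-moment, which this proof uses only for $t\le\sigma(T)$. It is driven by the decay estimates of Lemma \ref{ly1}, which give $\|F\|_{H^1}\le CC_0^{1/2}t^{-1/2}$ and hence $\int_{\sigma(T)}^T\|F\|_{L^\infty}^4dt\le CC_0^{35/18}$. Young's inequality then gives $N_1=1/(2\mu+\lambda)$ and $\bar\zeta=1\le\bar\rho$. A pointwise bound $\|F\|_{L^\infty}\le\varepsilon+CC_0^\delta$ is not available, since $\|\nabla\dot u\|_{L^2}$ is only time-integrable.

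Two smaller slips. In the Bourgain--Brezis step you must pair in $L^1$--$L^\infty$: $\int\rho\dot u\cdot\Phi\,dx\le\|\rho\|_{L^1}^{1/2}\|\rho^{1/2}\dot u\|_{L^2}\|\Phi\|_{L^\infty}$, using $\int\rho\,dx=1$, because there is no $L^2$ bound on $\Phi$. For $|m|$ you must keep the damping $-|m|^3/(2\mu+\lambda)$ and drop $P$, not $m^2$.
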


The proof of Proposition \ref{pr1} will be postponed to the end of this section.

In the following, we will use the convention that $C$ denotes a generic positive constant depending  on $\mu$, $\lambda$, $\gamma$, $a$, $\bar{\n}, \bar m, \beta,$ $N_0,$ and $M$, and  use $C(\al)$ to emphasize that $C$ depends on $\al.$

First, we will prove the  preliminary $L^2$ bounds for  $\nabla u$ and $\n\dot{u}$.

\begin{lemma}\la{le21} Let $(\n,u,m)$ be a smooth solution of
 (\ref{a1})-(\ref{h2}) on $\O \times (0,T]. $ Then there exists a positive constant depending only on $\mu$,  $\lambda$ and $\gamma$ such that
\be \la{a16}
\sup_{0\le t\le T}\int\left(\frac{1}{2}\n|u|^2+\frac{P}{\ga-1} +m^2\right)dx+\ia\int |\na
u|^2 dxdt\le CC_0,
\ee
\begin{equation}\la{h14}
\begin{split}
  A_1(T) \le&  C C_0 +C\sup\limits_{0\le t\le T}(\|P\|_{L^2}^2+\|m^2\|_{L^2}^2)\\
  & + C\int_0^{T}\sigma\int\left(|\nabla u|^3  +(P+m^2)|\nabla u|^2\right)dx dt,
\end{split}
\end{equation}
\begin{equation}\la{h15}
\begin{split}
    A_2(T)
    \le&   C C_0 +CA_1(T) +CA_1^2(T) \\
    &+ C\int_0^{T} \sigma^2 \left(\|\nabla u\|_{L^4}^4 +\|P\|_{L^4}^4+\|m^2\|_{L^4}^4\right) dt.
\end{split}
\end{equation}
\end{lemma}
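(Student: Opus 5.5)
The plan follows the Li--Xin argument for the 2D compressible Navier--Stokes system, treating $P+m^2$ as a single total pressure and using only two structural facts: $m$ satisfies the same continuity equation as $\n$, and $\na(P+m^2)$ enters the momentum equation in gradient form.

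\emph{Energy identity \eqref{a16}.} I would multiply \eqref{a1}$_2$ by $u$ and integrate by parts, using \eqref{a1}$_1$ for the kinetic and internal-energy parts. From \eqref{a1}$_3$, $(m^2)_t+\div(m^2u)+m^2\div u=0$, so
\[
\int \na m^2\cdot u\,dx=-\int m^2\div u\,dx=\frac{d}{dt}\int m^2dx .
\]
Likewise $\int\na P\cdot u\,dx=\frac{1}{\gamma-1}\frac{d}{dt}\int P\,dx$. Adding these gives
\[
\frac{d}{dt}\int\Big(\tfrac12\n|u|^2+\tfrac{P}{\gamma-1}+m^2\Big)dx+\mu\|\na u\|_{L^2}^2+(\mu+\lambda)\|\div u\|_{L^2}^2=0 .
\]
Integrating in time, and using $\mu\|\na u\|^2+(\mu+\lambda)\|\div u\|^2\ge c\|\na u\|^2$ (true since $\|\div u\|\le\sqrt2\|\na u\|$ in 2D and $\mu+\lambda\ge-\mu/3$), yields \eqref{a16}.

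\emph{Estimate \eqref{h14}.} I would multiply the momentum equation, written as $\n\dot u=\na F+\mu\na^\perp\o$, by $\sigma\dot u$ and integrate. The viscous terms produce $\frac12\frac{d}{dt}\big(\sigma(\mu\|\o\|^2+(2\mu+\lambda)\|\div u\|^2)\big)$ plus commutator terms bounded by $C\sigma\int|\na u|^3$. The pressure term is $-\int\sigma\dot u\cdot\na(P+m^2)$. I would write it as $\frac{d}{dt}\int\sigma(P+m^2)\div u$ minus a $\sigma'$ term, minus the material-derivative terms. These are computed from $P_t+\div(Pu)+(\gamma-1)P\div u=0$ and the analogous equation for $m^2$ (with coefficient $1$ in place of $\gamma-1$), and they are bounded by $C\sigma\int(P+m^2)|\na u|^2$. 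The boundary term $\sigma\int(P+m^2)\div u$ is absorbed by Young's inequality into $\frac{\mu}{4}\sigma\|\na u\|^2+C\|P\|^2_{L^2}+C\|m^2\|^2_{L^2}$. The $\sigma'$ contributions live on $(0,1)$ and are bounded by $CC_0$ using \eqref{a16}. Finally, $\|\na u\|_{L^2}$ is controlled by $\|\div u\|_{L^2}+\|\o\|_{L^2}$.

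\emph{Estimate \eqref{h15}.} I would apply $\sigma^2\dot u^j[\p_t+\div(u\,\cdot)]$ to the $j$-th momentum equation and integrate, as in Huang--Li--Xin. The viscous part gives $\sigma^2\|\na\dot u\|^2$ up to errors bounded by $\sigma^2\|\na u\|_{L^4}^4$, after Young's inequality. For the total-pressure term,
\[
-\int\sigma^2\dot u^j\big[\p_j(P+m^2)_t+\div(\p_j(P+m^2)u)\big]dx ,
\]
I would use the transport equations again to rewrite it as
\[
\int\sigma^2\big[-(P+m^2)\div u\,\div\dot u+\text{lower-order terms}\big]dx ,
\]
with coefficients $\gamma-1$ and $1$ on the two pieces. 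By Cauchy's inequality this is bounded by $\delta\sigma^2\|\na\dot u\|^2+C\sigma^2\big(\|P\|_{L^4}^4+\|m^2\|_{L^4}^4+\|\na u\|_{L^4}^4\big)$. The $\sigma\sigma'$ term is bounded by $\int_0^1\sigma\|\n^{1/2}\dot u\|^2\le A_1$. The $A_1^2$ term comes from the $\|\na u\|_{L^4}^4$-type contributions after Gagliardo--Nirenberg, or is simply kept in that form.

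The main obstacle is the bookkeeping in the $m^2$ terms. They must have exactly the structure of the pressure terms, which depends on the identity $(m^2)_t+\div(m^2u)=-m^2\div u$. Once that identity is in hand, every term is handled as in the Navier--Stokes case.
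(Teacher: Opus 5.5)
Your proposal is correct and follows essentially the paper's route. You use the basic energy identity, then test the momentum equation with $\dot u$ while treating $m^2$ exactly like $P$ through $(m^2)_t+{\rm div}(m^2u)+m^2\,{\rm div}\,u=0$, and finally run the Huang--Li--Xin $\partial_t+{\rm div}(u\,\cdot)$ argument for $A_2(T)$. There the $\sigma\sigma'$ term gives $CA_1(T)$, and the extra $CC_0+CA_1^2(T)$ on the right of \eqref{h15} is simply not needed.

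One small correction. This lemma assumes no upper bound on $\rho$, so the contribution $\sigma'\int(P+m^2)\,{\rm div}\,u\,dx$ on $(0,1)$ is not bounded by $CC_0$ from \eqref{a16} alone. It is bounded by $CC_0+C\sup_{t}(\|P\|_{L^2}^2+\|m^2\|_{L^2}^2)$, which is exactly why that supremum appears in \eqref{h14}.
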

\begin{proof}
First, applying standard energy estimate gives \eqref{a16} directly.

Next, we will prove \eqref{h14}. Multiplying $(\ref{a1})_2 $ by $ \dot{u}  $ and then integrating the resulting equality over ${\O } $ lead  to
\be\la{m0}\ba
\int\n|\dot{u}|^2dx
&=\mu \int \triangle u\cdot \dot{u}dx + (\mu+\lambda)\int \nabla\text{div}u\cdot \dot{u}dx- \int\dot{u}\cdot\nabla Pdx\\
&\quad-\int\dot u\cdot\na m^2 dx\triangleq\sum_{i=1}^4I_i.
\ea\ee

Similar to the proof of \cite[Lemma 3.2]{Li201977}, we have
\be
\ba\no
\sum_{i=1}^3I_i\le
&\left(\int P\text{div}udx -\frac{\mu }{2} \|\nabla u\|_{L^2}^2-\frac{\lambda+\mu}{2}\|\text{div}u\|_{L^2}^2\right)_t\\
& +C \int P|\na u |^2dx+C\|\na u\|_{L^3}^3.
\ea\ee

Using \eqref{a1}$_3$, we get
\be\no
\ba
I_4&=\int m^2  \div u_tdx+\int m^2  \div (u\cdot \na u)dx\\
&=\left(\int m^2 \div udx\right)_t+2\int(u\cdot \na m+m\div u)m\div udx+\int m^2 \div(u\cdot \na u)dx\\
&\leq\left(\int m^2 \div udx\right)_t+C\int m^2|\nabla u|^2dx.
\ea
\ee

Putting $I_1$-$I_4$ into \eqref{m0} yields
\be\la{n1}
\ba
B'(t)+\int  \n|\dot{u} |^2 dx\le C \int \big(P+m^2\big)|\na u |^2  dx+ C\|\nabla u\|_{L^3}^3,
\ea
\ee
where
\be\ba \la{nv1}
B(t)\triangleq \frac{\mu  }{2}\|\nabla u\|_{L^2}^2 +\frac{ \lambda+\mu}{2}\|\text{div}u\|_{L^2}^2-\int  \text{div}u P dx-\int m^2\text{div}u dx,
\ea\ee
satisfies
\be\la{n2'}\ba
\frac{\mu}{4}\|\na u\|_{L^2}^2-C\|P\|_{L^2}^2-C\|m^2\|_{L^2}^2\le B(t) \le C\|\na u\|_{L^2}^2+C\|P\|_{L^2}^2+C\|m^2\|_{L^2}^2,
\ea\ee
we obtain (\ref{h14}) after integrating \eqref{n1} multiplied by $\si$ over $(0,T)  $ and using   \eqref{a16}  and \eqref{n2'}.

Now, we will prove \eqref{h15}. Applying the operator $\pa/\pa_t+\div(u\cdot~)$ to $ (\ref{a1})_2^j $ and multiplying the resulting equation by $\dot{u}^j$, one obtains after some simple calculations that
\be\label{lv3.40}\ba   
\frac{1}{2}\left(\int \n |\dot{u}^j|^2dx\right)_t
&=\mu\int\dot{u}^j(\Delta u^j_t+\div(u\Delta u^j))dx\\
&\quad+(\mu+\lambda)\int\dot{u}^j(\pa_t\pa_j(\div u)+\div(u\pa_j(\div u)))dx\\
&~~~-\int\dot{u}^j(\pa_jP_t+\div(u\pa_jP))dx\\
&\quad-\int\dot{u}^j(\pa_t\pa_jm^2+\div(u\pa_jm^2))dx\triangleq\sum^4_{i=1}J_i.
\ea\ee

First, following the same arguments as in \cite{H3}, we  have
\be
\ba\no
\sum_{i=1}^{3}J_i&\leq -\frac{3\mu}{4}\|\na\dot{u}\|_{L^2}^2  +C\|\na u\|_{L^4}^4 +C\|P\|_{L^4}^4.
\ea\ee

Next, it follows from  \eqref{a1}$_3$ and \eqref{g1} that
\be
\ba\no
J_4&= 2\int\pa_j\dot{u}^j mm_tdx +\int\pa_i\dot{u}^j u^i\pa_jm^2 dx\\
&=- 2\int\pa_j\dot{u}^j m (u\cdot \na m+m\div u)dx-\int\big(\pa_i\pa_j\dot{u}^j  u^im^2+\pa_i\dot{u}^j \pa_ju^im^2 \big)dx\\
&\leq C\int|\na \dot{u}| |\na u||m|^2dx\\
&\leq \frac{\mu}{4}\int|\na \dot{u}|^2dx+C\int|\na u|^4dx+C\int m^8dx .
\ea\ee

Putting $J_1$-$J_4$ into \eqref{lv3.40} yields
\be\label{lv3.46}\ba
\left(\int \n |\dot{u}|^2dx\right)_t+\mu\int|\na\dot{u}|^2dx
\leq C \|\na u\|^4_{L^4}+C \|P\|^4_{L^4}+C \|m^2\|^4_{L^4},
\ea\ee
which multiplied by $\sigma^2$ gives \eqref{h15} and completes the proof of Lemma \ref{le21}.
\end{proof}

that the pressure and the magnetic field
decay in time.
\begin{lemma}\la{le5}
Let $(\n,u,m)$ be a smooth solution  of (\ref{a1})-(\ref{h2}) on $\O \times (0,T] $ satisfying (\ref{z1}). Then there exists a positive constant $C(\on, \bar m)$ depending only  on $\mu,$  $\lambda,$  $\gamma,$ $\on$ and $\bar m$ such that
\be\la{h27}
A_1(T)+A_2(T)+\int_0^T\bigl(\si\|P\|_{L^2}^2+\si\|m^2\|_{L^2}^2\bigr)dt\le C(\on, \bar m) C_0 .
\ee
\end{lemma}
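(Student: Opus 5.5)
The plan is to follow the proof of \cite[Lemma 3.3]{Li201977}, with the pressure estimate there replaced by the Bourgain--Brezis argument from the introduction. The first step is the key pressure bound. Given $\Phi$ from Lemma \ref{lem00} with $\div\Phi=P+m^2$, multiply \eqref{mini} by $\Phi$ and integrate by parts. Using $\|\Phi\|_{L^\infty}+\|\na\Phi\|_{L^2}\le C\|P+m^2\|_{L^2}$, one gets
\begin{align*}
\|P+m^2\|_{L^2}^2&=\int \rho\dot u\cdot\Phi\,dx+\mu\int\na u:\na\Phi\,dx+(\mu+\lambda)\int\div u\,\div\Phi\,dx\\
&\le \|\rho^{1/2}\|_{L^2}\|\rho^{1/2}\dot u\|_{L^2}\|\Phi\|_{L^\infty}+C\|\na u\|_{L^2}\|\na\Phi\|_{L^2}.
\end{align*}
Since $\int\rho\,dx=1$, this gives $\|P\|_{L^2}+\|m^2\|_{L^2}\le C(\|\rho^{1/2}\dot u\|_{L^2}+\|\na u\|_{L^2})$. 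Multiplying by $\sigma$, integrating in time, and using \eqref{a16} and \eqref{As1} then yields
\[
\int_0^T\sigma(\|P\|_{L^2}^2+\|m^2\|_{L^2}^2)\,dt\le C C_0+CA_1(T).
\]

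The second step bounds the right sides of \eqref{h14} and \eqref{h15} under \eqref{z1}.
\begin{itemize}
\item \emph{Sup terms in \eqref{h14}.} By \eqref{z1}, $\|P\|_{L^2}^2\le C(\bar\rho)\int P\,dx\le CC_0$ and $\|m^2\|_{L^2}^2\le C(\bar m)\int m^2\,dx\le CC_0$.
\item \emph{Cubic term.} Use \eqref{h18} with $p=3$: $\|\na u\|_{L^3}^3\le C\|\rho\dot u\|_{L^2}\|\na u\|_{L^2}^2+C\|\na u\|_{L^2}^3+C\|P\|_{L^3}^3+C\|m^2\|_{L^3}^3$, with all pressure $L^2$ norms controlled by the first step.
\item \emph{Pressure factors.} Bound $\|P\|_{L^3}^3\le C\|P\|_{L^2}^2$ and $\|m^2\|_{L^3}^3\le C\|m^2\|_{L^2}^2$ using the $L^\infty$ bounds.
\item \emph{Mixed term.} $\int (P+m^2)|\na u|^2\,dx\le C\|\na u\|_{L^2}^2$.
\item \emph{Assembling.} Combining these with $\sigma\|\na u\|_{L^2}^2\le A_1\le 2C_0^{1/2}$ and \eqref{a16}, $\int\sigma\|\rho\dot u\|_{L^2}\|\na u\|_{L^2}^2$ is at most $\tfrac12 A_1+C\sup(\sigma\|\na u\|_{L^2}^2)\int\|\na u\|^2\le \tfrac12A_1+CC_0$. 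Together with the first step, this gives $A_1(T)\le CC_0$.
\end{itemize}

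For $A_2$, insert into \eqref{h15} the bound
\[
\|\na u\|_{L^4}^4\le C\|\rho\dot u\|_{L^2}^2(\|\na u\|_{L^2}^2+\|P\|_{L^2}^2+\|m^2\|_{L^2}^2)+C\|P\|_{L^4}^4+C\|m^2\|_{L^4}^4.
\]
Then $\|P\|_{L^4}^4\le C\|P\|_{L^2}^2$ and similarly for $m^2$. Multiplying by $\sigma^2$, the product term is at most $C\sup_t(\sigma\|\na u\|_{L^2}^2+\sigma\|P\|_{L^2}^2+\sigma\|m^2\|_{L^2}^2)\int\sigma\|\rho^{1/2}\dot u\|_{L^2}^2\le CC_0$. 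The pressure terms are bounded by $\int\sigma\|P\|_{L^2}^2+\sigma\|m^2\|_{L^2}^2\le CC_0$. Hence $A_2\le CC_0+CA_1+CA_1^2\le CC_0$.

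The main obstacle is the first step: the identity must hold without a lower bound on $\rho$, and the integrations by parts must be justified with $\Phi$ only in $L^\infty\cap \dot H^1$. Since $\rho\dot u\in L^1$ (indeed $\rho^{1/2}\in L^2$ and $\rho^{1/2}\dot u\in L^2$) and $\na u\in L^2$, I would justify them by a cutoff approximation. The crucial point is that $P\ge0$ and $m^2\ge0$, so $\|P+m^2\|_{L^2}$ dominates each piece separately, regardless of $\gamma>1$.
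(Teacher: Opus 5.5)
Your Step 1 is the paper's argument \eqref{mgj}--\eqref{new1}, and your treatment of $A_2$ via \eqref{h15} is fine once $A_1(T)\le CC_0$ is known. The gap is in closing the $A_1$ estimate.

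Step 1 gives $\int_0^T\sigma(\|P\|_{L^2}^2+\|m^2\|_{L^2}^2)\,dt\le CC_0+CA_1(T)$, and this $C$ is not small. In \eqref{h14}, the cubic term, through \eqref{h18} with $p=3$, contains $C\int_0^T\sigma(\|P\|_{L^3}^3+\|m^2\|_{L^3}^3)\,dt$. You bound this by $C\int_0^T\sigma(\|P\|_{L^2}^2+\|m^2\|_{L^2}^2)\,dt\le CC_0+C'A_1(T)$. So you arrive at $A_1\le CC_0+(\tfrac12+C')A_1$, and the term $C'A_1$ cannot be absorbed. Falling back on $A_1\le 2C_0^{1/2}$ only gives $A_1\le CC_0^{1/2}$. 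That is not \eqref{h27}, and it would not close the bootstrap in Proposition \ref{pr1}. The bound $\|P\|_{L^3}^3\le C\|P\|_{L^1}\le CC_0$ only helps on time intervals of bounded length, so the difficulty lies in large time.

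There is a second, smaller problem. Substituting Step 1 into $\|\rho\dot u\|_{L^2}(\|P\|_{L^2}^2+\|m^2\|_{L^2}^2)$ produces $\|\rho^{1/2}\dot u\|_{L^2}^3$, not $\|\na u\|_{L^2}^3$. The integral $\int_0^1\sigma\|\rho^{1/2}\dot u\|_{L^2}^3\,dt$ is not controlled by $A_1(T)$ and $A_2(T)$.

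What is missing is the paper's splitting in time.
\begin{itemize}
\item First collect \eqref{h14}, \eqref{h15}, \eqref{h99} and \eqref{new1} into $A_1+A_2\le CC_0+C\int_0^T\sigma\|\na u\|_{L^3}^3\,dt$.
\item On $[0,\sigma(T)]$, use \eqref{h18} with $p=3$ but keep the pressure norms as they are. Then $\sigma\|\rho^{1/2}\dot u\|_{L^2}\le A_2^{1/2}\le C$. Also $\int_0^{\sigma(T)}(\|\na u\|_{L^2}^2+\|P\|_{L^2}^2+\|m^2\|_{L^2}^2+\|P\|_{L^3}^3+\|m^2\|_{L^3}^3)\,dt\le CC_0$, by \eqref{a16} and the $L^\infty$ bounds in \eqref{z1} over a unit interval.
\item On $[\sigma(T),T]$, interpolate $\|\na u\|_{L^3}^3\le\delta\|\na u\|_{L^4}^4+C(\delta)\|\na u\|_{L^2}^2$. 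The last term integrates to $C(\delta)C_0$. For the first, \eqref{h99} together with Step 1 gives $\int_0^T\sigma^2\|\na u\|_{L^4}^4\,dt\le CC_0+CA_1$.
\end{itemize}
Now $A_1$ enters only with the small factor $\delta$ and can be absorbed. This is exactly what your direct use of Step 1 in the cubic term fails to provide.
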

\begin{proof}
First, it follows from (\ref{h18}), \eqref{z1} and \eqref{a16}  that
\be
\ba\la{h99}
&\int_0^{T}\sigma^2\left(\|\na u\|_{L^4}^4+\|P\|_{L^4}^4+\|m^2\|_{L^4}^4 \right)dt\\
&\le C\int_0^{T}\si\|\n\dot u\|_{L^2}^2\left(\si\|\na u\|_{L^2}^2+\si\|P\|_{L^2}^2+\si\|m^2\|_{L^2}^2\right)dt\\
&\quad+C\int_0^{T}\sigma^2\|P\|_{L^4}^4dt+C\int_0^T\si^2\|m^2\|_{L^4}^4dt\\
&\le C(\on,\bar m)\left(A_1(T)+C_0 \right)\int_0^{T}\sigma\|\n^{1/2}\dot u\|_{L^2}^2dt\\
&\quad+C(\on)\int_0^{T}\sigma^2\|P\|_{L^2}^2dt+C(\bar m) \int_0^{T}\sigma^2 \|m^2\|_{L^2}^2dt\\
&\le C(\on, \bar m) C_0+C(\on,\bar m)\int_0^{T}\sigma^2\|P+m^2\|_{L^2}^2dt.
\ea
\ee

Next, to estimate the last term on the right-hand side of \eqref{h99}, we rewrite $(\ref{a1})_2$ as
\be\label{pm21}
-\nabla(P+m^2)=\n  \dot u -\mu\Delta u-(\mu+\lambda)\nabla\div u.
\ee
Applying Lemma \ref{lem00} to $P+m^2\in L^2(\O)$, there exists a vector field $\Phi:\O\to\r^2$ such that
\be\label{pm22}
\div\Phi=P+m^2,\qquad \|\Phi\|_{L^\infty}+\|\na\Phi\|_{L^2}\le C\|P+m^2\|_{L^2}.
\ee
By direct calculation and combining with \eqref{pm21}-\eqref{pm22}, we obtain
\begin{equation}
\begin{split} \la{mgj}
\|P+m^2\|_{L^2}^2
&= \int(P+m^2)(P+m^2)dx=-\int\nabla(P+m^2)\cdot\Phi dx\\
&=\int\n\dot u\cdot\Phi\,dx+\mu\int\na u:\na\Phi\,dx
 +(\mu+\lambda)\int\div u\,\div\Phi\,dx\\
&\le \|\n\dot u\|_{L^1}\|\Phi\|_{L^\infty}
 +C\|\na u\|_{L^2}\|\na\Phi\|_{L^2}\\
&\le C\left(\|\n\|_{L^1}^{1/2}\|\n^{1/2}\dot u\|_{L^2}
 +\|\na u\|_{L^2}\right)\|P+m^2\|_{L^2}\\
&=C\left(\|\n^{1/2}\dot u\|_{L^2}+\|\na u\|_{L^2}\right)\|P+m^2\|_{L^2}.
\end{split}
\end{equation}
where in the last inequality, one has used
\be \la{mr3}\int\n dx =\int \n_0dx =1.\ee

From \eqref{mgj}, we have
\be \la{new1}\ba \|P+m^2\|_{L^2}  \le   C  \| \n^{1/2}  \dot u \|_{L^2} + { C}\|\na u\|_{L^2},\ea \ee
which, along with  \eqref{h14},   \eqref{h15}, \eqref{h99}, \eqref{a16}, and \eqref{z1} gives   \be\la{h28}\ba
A_1(T)+A_2(T)\le& C(\on,\bar m)C_0 +C(\on,\bar m)\int_0^{T}  \sigma\|\nabla u\|_{L^3}^3 dt. \ea\ee

 Finally, on the one hand,  one deduces from (\ref{h18}),  \eqref{g1}, (\ref{a16})  and (\ref{z1})  that 
 \be\la{h34} \ba
  \int_0^{\si(T)} \sigma\|\nabla u\|_{L^3}^3 dt
&  \le C \int_0^{\si(T)} \sigma\|\n^{1/2} \dot u \|_{L^2} \left(\|\na u\|_{L^2}^2+\|P\|_{L^2}^2 +\|m^2\|_{L^2}^2\right)  dt \\
&\quad+C\int_0^{\si(T)}(\si\|P\|_{L^3}^3+ \si\|m^2\|_{L^3}^3)dt \\
&  \le C A_2^{1/2}(\si(T))\int_0^{\si(T)}  \left(\|\na u\|_{L^2}^2+\|P\|_{L^2}^2+\|m^2\|_{L^2}^2 \right)dt\\
&\quad+C\int_0^{\si(T)}(\|P\|_{L^3}^3+\|m^2\|_{L^3}^3) dt\\
& \le  C(\on,\bar m)C_0.\ea \ee
%
 On the other hand, H\"older's inequality, \eqref{h99},  \eqref{z1}, and \eqref{new1} imply \be\la{h33} \ba
 \int_{\si(T)}^{T}  \|\nabla u\|_{L^3}^3  dt
&\le
\de\int_{\si(T)}^{T}    \|\nabla u\|_{L^4}^4 dt + C(\de) \int_{\si(T)}^{T} \|\nabla u\|_{L^2}^2 dt \\
& \le  \de C(\on,\bar m) A_1(T)+  C(\de)C(\on, \bar m) C_0.
\ea \ee
Putting   \eqref{h34}  and \eqref{h33} into \eqref{h28} and choosing $\de$ suitably small leads to \bnn A_1(T)+A_2(T)\le C(\on,\bar m)C_0,\enn  which together with \eqref{new1} and \eqref{a16} gives \eqref{h27} and finishes  the proof of Lemma \ref{le5}.
\end{proof}


Next, we derive the rates of decay for $\na u$, $P$ and $m$, which are  essential to obtain the uniform  (in time) upper bound of the $\rho$ and $m$ for large time.
\begin{lemma}\la{ly1}
For $p\in [2,\infty),$ there exists a positive constant $C(p,\bar\rho,\bar m)$ depending only on $p, \mu,$  $\lambda,$  $\gamma,$ $\on$ and $\bar m$ such that,  if  $(\n,u,m)$ is a smooth solution  of \eqref{a1}-\eqref{h2} on $\O \times (0,T] $ satisfying \eqref{z1}, then
\be \la{ly8} \ba &\sup\limits_{\si(T)\le t\le T}\left[t^{p-1}\left(\|\na u\|_{L^p}^p + \|P\|^p_{L^p}+ \|m^2\|^p_{L^p}\right)+t^2\|\n^{1/2}\dot u\|_{L^2}^2 \right] \le C(p,\on,\bar m)C_0.\ea\ee
\end{lemma}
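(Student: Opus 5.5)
We follow the Li--Xin strategy \cite[Lemma 3.4]{Li201977}, adapted to the total pressure. The plan is to derive time-weighted versions of the two differential inequalities \eqref{n1} and \eqref{lv3.46}, now with the large-time weights $t$ and $t^2$ in place of $\sigma$ and $\sigma^2$. These will be combined with a decay estimate for $\|P\|_{L^p}^p+\|m^2\|_{L^p}^p$ obtained from the transport equations, and with \eqref{h18} to convert into bounds on $\|\na u\|_{L^p}$.

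\textbf{Step 1 (pressure and magnetic transport).} From $(\ref{a1})_1$ and $(\ref{a1})_3$ we have $P_t+\div(Pu)+(\ga-1)P\div u=0$ and $(m^2)_t+\div(m^2u)+m^2\div u=0$. Substituting $\div u=(F+P+m^2)/(2\mu+\lambda)$ gives, for $p\ge2$,
\[
\frac{d}{dt}\int\bigl(P^p+m^{2p}\bigr)dx+c\int\bigl(P+m^2\bigr)\bigl(P^p+m^{2p}\bigr)dx\le C\int|F|\bigl(P^p+m^{2p}\bigr)dx .
\]
The good sign uses $P,m^2\ge0$, and $(P+m^2)(P^p+m^{2p})\ge P^{p+1}+m^{2(p+1)}$. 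By Young's inequality and \eqref{z1}, the right side is at most $\frac c2\int(P^{p+1}+m^{2p+2})dx+C\|F\|_{L^{p+1}}^{p+1}$. We then multiply by $t^{p-1}$ and absorb $(p-1)t^{p-2}\int(P^p+m^{2p})dx$ using the dissipation and $\|P+m^2\|_{L^2}$ integrability. After that, $\|F\|_{L^{p+1}}$ is controlled through \eqref{h20} by $\|\n\dot u\|_{L^2}$ and $\|\na u\|_{L^2}+\|P+m^2\|_{L^2}$. The latter is in turn dominated, via \eqref{new1}, by $\|\n^{1/2}\dot u\|_{L^2}+\|\na u\|_{L^2}$. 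This reduces Step 1 to decay of $\|\na u\|_{L^2}$ and $\|\n^{1/2}\dot u\|_{L^2}$.

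\textbf{Step 2 (weighted energy).} We multiply \eqref{n1} by $t$ and \eqref{lv3.46} by $t^2$ and integrate over $(\si(T),t)$, using the integrability from Lemma \ref{le5}. The terms $\int_{\si(T)}^T\|\na u\|_{L^2}^2dt$, $\int_{\si(T)}^T\|\n^{1/2}\dot u\|_{L^2}^2dt$ and $\int_{\si(T)}^T\|P+m^2\|_{L^2}^2dt$ are bounded by $CC_0$ by \eqref{a16} and \eqref{h27}. For the quantity $B(t)$ in \eqref{nv1}, the pressure part of the lower bound \eqref{n2'} is handled by \eqref{new1} at the same time level, so that $tB(t)\ge \frac\mu8 t\|\na u\|_{L^2}^2-Ct\|\n^{1/2}\dot u\|_{L^2}^2$. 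The last term is then closed by the $t^2$ estimate combined with Young's inequality. The cubic and quartic terms $t\|\na u\|_{L^3}^3$ and $t^2\|\na u\|_{L^4}^4$ are treated via \eqref{h18} and smallness of $C_0$, as in \eqref{h99}--\eqref{h33}. We expect $t\|\na u\|_{L^2}^2+t^2\|\n^{1/2}\dot u\|_{L^2}^2\le CC_0$. Then \eqref{new1} gives $t\|P+m^2\|_{L^2}^2\le CC_0$, which is the case $p=2$.

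\textbf{Step 3 (general $p$).} We feed Step 2 into Step 1 to obtain $t^{p-1}(\|P\|_{L^p}^p+\|m^2\|_{L^p}^p)\le CC_0$. Finally, by \eqref{h18},
\[
t^{p-1}\|\na u\|_{L^p}^p\le C\bigl(t^2\|\n^{1/2}\dot u\|_{L^2}^2\bigr)^{(p-2)/2}\bigl(t\|\na u\|_{L^2}^2+t\|P+m^2\|_{L^2}^2\bigr)+Ct^{p-1}\bigl(\|P\|_{L^p}^p+\|m^2\|_{L^p}^p\bigr),
\]
which yields \eqref{ly8}.

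\textbf{Main obstacle.} The delicate point is the weight bookkeeping in Step 1. The time weight $t^{p-1}$ must be matched with $\|F\|_{L^{p+1}}^{p+1}\lesssim\|\n\dot u\|_{L^2}^{p-1}(\cdots)^2$. Doing this requires an interpolation such as $t^{p-1}\|\n\dot u\|^{p-1}_{L^2}\le (t^2\|\n^{1/2}\dot u\|_{L^2}^2)^{(p-1)/2}t^{-(p-1)/2}\cdot t^{p-1}$. This leaves a factor that must be absorbed, using $t\|P+m^2\|_{L^2}^2$ and the integrable quantities from Lemma \ref{le5}, possibly via induction on $p$. Throughout, the key tool is that \eqref{new1} controls $P$ and $m^2$ jointly without requiring a density lower bound.
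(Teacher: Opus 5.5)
\textbf{The gap is in Step 2.} There you run the weighted energy estimates without the transport dissipation from Step 1, and they do not close that way.

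The $t^2$-weighted form of \eqref{lv3.46} contains $t^2(\|P\|_{L^4}^4+\|m^2\|_{L^4}^4)$. The term $t^2\|\nabla u\|_{L^4}^4$ produces it again through \eqref{h18}. Your $t$-level cubic term, split as in \eqref{h33} into $\delta t^2\|\nabla u\|_{L^4}^4+C\|\nabla u\|_{L^2}^2$, leads to the same integral. Imitating \eqref{h99}, i.e.\ $\|P\|_{L^4}^4\le C\|P\|_{L^2}^2$ followed by \eqref{new1}, reduces this to
\[
\int_{\sigma(T)}^T t^2\left(\|\rho^{1/2}\dot u\|_{L^2}^2+\|\nabla u\|_{L^2}^2\right)dt ,
\]
which nothing available controls. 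Even the decay you are trying to prove only gives $t^2\|P\|_{L^4}^4\lesssim C_0t^{-1}$, which is not integrable on $[1,\infty)$. So no pointwise-in-time bound can close this, and smallness of $C_0$ does nothing against divergence in $T$.

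Your coercivity claim $tB\ge\frac{\mu}{8}t\|\nabla u\|_{L^2}^2-Ct\|\rho^{1/2}\dot u\|_{L^2}^2$ also fails. Estimate \eqref{new1} carries a generic constant in front of $\|\nabla u\|_{L^2}$. Inserting it into \eqref{n2'} produces $-C\|\nabla u\|_{L^2}^2$, which $\frac{\mu}{4}\|\nabla u\|_{L^2}^2$ cannot absorb.

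\textbf{The missing idea.} The pressure decays only through the dissipation in its transport equation, so Step 1 must be coupled into Step 2 rather than applied afterwards.

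First level: the paper bounds the cubic term in \eqref{n1} by $C_1(\|P\|_{L^3}^3+\|m^2\|_{L^3}^3)+\dots$ via \eqref{h18}, and adds $C_2$ times the case $p=2$ of \eqref{aa96a}. This gives \eqref{h29}. There $B+C_2(\|P\|_{L^2}^2+\|m^2\|_{L^2}^2)$ is coercive by \eqref{ly12}, and the dissipation absorbs the $L^3$ pressure terms. Multiplying by $t$ yields \eqref{h31}, including $\int t(\|P\|_{L^3}^3+\|m^2\|_{L^3}^3)dt\le CC_0$.

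Second level: $t^2$ times the case $p=3$ of \eqref{aa96a} is added to the $t^2$-weighted \eqref{lv3.46}. Its dissipation supplies $\int t^2(\|P\|_{L^4}^4+\|m^2\|_{L^4}^4)dt$. The extra term $2t(\|P\|_{L^3}^3+\|m^2\|_{L^3}^3)$ is integrable by \eqref{h31}. This gives \eqref{ly9}.

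\textbf{Steps 1 and 3 are fine once \eqref{ly9} is available.} The ``main obstacle'' you flag is not one: $t^{p-1}\|\rho\dot u\|_{L^2}^{p-1}\le C(t^2\|\rho^{1/2}\dot u\|_{L^2}^2)^{(p-1)/2}$ holds exactly, with no leftover power of $t$. Your displayed interpolation has a spurious factor. Your absorption of $(p-1)t^{p-2}\|P\|_{L^p}^p$ is a valid replacement for the paper's induction \eqref{ly10}. It uses $\|P\|_{L^p}^p\le\|P\|_{L^2}^{2/(p-1)}\|P\|_{L^{p+1}}^{(p+1)(p-2)/(p-1)}$, Young's inequality, and $\int_{\sigma(T)}^T\|P\|_{L^2}^2dt\le CC_0$.
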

\begin{proof}
First, the pressure  $P$ satisfies \be
\la{a95}P_t+u\cdot\nabla P+\ga P{\rm div}u=0 .\ee For $p\ge 2,$ multiplying \eqref{a95} by $p P^{p-1}$ and integrating
the resulting equality over ${\O } ,$ one gets after using ${\rm
div}u=\frac{1}{2\mu+\lambda}\left(F+P+m^2\right)$ that
\be\la{vv1}
\ba
\left(\| P\|_{L^p}^p \right)_t+ \frac{p\ga-1}{2\mu+\lambda}\|P\|_{L^{p+1}}^{p+1}
&=- \frac{p\ga-1}{2\mu+\lambda}\int P^p\left(F+m^2\right)dx \\
&\leq- \frac{p\ga-1}{2\mu+\lambda}\int P^pFdx \\
&\le \frac{p\ga-1}{2(2\mu+\lambda)}\|P\|_{L^{p+1}}^{p+1}+C(p)  \|F\|_{L^{p+1}}^{p+1},
\ea\ee
where we used $m^2\geq 0$, which together with \eqref{h20} and \eqref{g1} gives
\be\la{a96}\ba
\left(\| P\|_{L^p}^p \right)_t+ \frac{p\ga-1}{2(2\mu+\lambda)}\|P\|_{L^{p+1}}^{p+1}
\le C(p) \left(\|\na u\|_{L^2}^2+\|P\|_{L^2}^2+\|m^2\|_{L^2}^2\right)\|\n \dot u\|_{L^2}^{p-1}.
\ea\ee

Next, $m^2$ satisfies
\be\la{aa95}
(m^2)_t+u\cdot\nabla m^2+2 m^2{\rm div}u=0.
\ee
By similar calculations, we can obtain
\be\la{aa96}\ba
 \left(\|m^2\|_{L^p}^p \right)_t+ \frac{2p-1}{2(2\mu+\lambda)}\|m^2\|_{L^{p+1}}^{p+1}
    \le C(p) \left(\|\na u\|_{L^2}^2+\|P\|_{L^2}^2+\|m^2\|_{L^2}^2\right)\|\n \dot u\|_{L^2}^{p-1}.\ea\ee

Combining \eqref{a96} and \eqref{aa96} yields
\be\la{aa96a}\ba
 &\left(\| P\|_{L^p}^p+\|m^2\|_{L^p}^p \right)_t+ \frac{p-1}{2(2\mu+\lambda)}\left(\|P\|_{L^{p+1}}^{p+1}+ \|m^2\|_{L^{p+1}}^{p+1}\right)\\
    &\le C(p) \left(\|\na u\|_{L^2}^2+\|P\|_{L^2}^2+\|m^2\|_{L^2}^2\right)\|\n \dot u\|_{L^2}^{p-1}.
    \ea\ee

In particular, choosing $p=2$  in \eqref{aa96a} shows
\be\ba\la{aaa9z6}
&\left(\| P\|_{L^2}^2+\| m^2\|_{L^2}^2 \right)_t+ \frac{1}{2(2\mu+\lambda)}\left(\|P\|_{L^3}^3 + \|m^2\|_{L^3}^3 \right) \\
&\leq \de \|\n^{1/2}\dot u\|_{L^2}^2+C(\de)\left(\|\na u\|_{L^2}^4+\|P\|_{L^2}^4+\|m^2\|_{L^2}^4\right).
\ea\ee

Next, it follows from \eqref{n1}, \eqref{h18},  \eqref{g1}, and \eqref{a16} that
\begin{equation}   \la{ly11}\begin{split}
&B'(t)+\int \n|\dot{u} |^2 dx\\
& \le C\| P\|_{L^3}^3+ C\|m^2\|_{L^3}^3+C  \|\nabla u\|_{L^3}^3 \\
& \le C_1  \|P\|_{L^3}^3+ C_1\|m^2\|_{L^3}^3+ C \|\n\dot u\|_{L^2}\left(\|\na u\|_{L^2}^2+\|P\|_{L^2}^2+\|m^2\|_{L^2}^2\right)\\
& \le C_1 \|P\|_{L^3}^3+ C_1\|m^2\|_{L^3}^3+\delta\|\n^{1/2}\dot u\|_{L^2}^2+C(\on,\bar m )\left(\|\na u\|_{L^2}^4+\|P\|_{L^2}^4+\|m^2\|_{L^2}^4\right).
\end{split}
\end{equation}

Choosing $ C_2\ge\max\{ 2+  4(2\mu+\lm)(C_1+1)\}$
so large that
\be\la{ly12}\ba
&\frac{\mu}{4}\|\na u\|_{L^2}^2+\|P\|_{L^2}^2 + \|m^2\|_{L^2}^2\\&\le B(t)+C_2\|P\|_{L^2}^2+C_2\|m^2\|_{L^2}^2 \le C\|\na u\|_{L^2}^2+C\|P\|_{L^2}^2+C\|m^2\|_{L^2}^2,
\ea\ee
and adding \eqref{aaa9z6} multiplied by $C_2$  to \eqref{ly11}, we obtain after choosing $\de$ suitably small that
\begin{equation} \la{h29}\begin{split} &2\left( B(t)+C_2 \| P\|_{L^2}^2+C_2\| m^2 \|_{L^2}^2 \right)_t +\int  \left(\n|\dot{u} |^2 + P^3+m^6  \right)dx \\
& \le
C \| P\|_{L^2}^4+C\|m^2\|_{L^2}^4 + C  \|\na u\|_{L^2}^4 ,
\end{split}
\end{equation}
multiplying \eqref{h29} by $t$ and applying Gr\"onwall's inequality, together with \eqref{ly12}, \eqref{h27},  \eqref{a16}, and \eqref{z1} yields
 \be \la{h31} \ba & \sup\limits_{
\si(T)\le t\le T}t\left( \| \na u\|_{L^2}^2+ \| P\|_{L^2}^2+\| m^2 \|_{L^2}^2
 \right) \\& + \int_{\si(T)}^Tt \int  \left(\n|\dot{u} |^2+ P^3 +m^6 \right)dxdt\le C(\on,\bar m)C_0. \ea\ee

Then, multiplying \eqref{lv3.46} by $t^2 $ shows
\be\la{zo1}
\ba
&\left(t^2\int\n|\dot{u}|^2dx \right)_t + { \mu}t^2 \int|\nabla\dot{u}|^2dx \\
&\le 2t\int\n|\dot{u}|^2dx+C(\bar\rho,\bar m)t^2\|\rho\dot u\|_{L^2}^2(\|\nabla u\|_{L^2}^2+\|P\|_{L^2}^2+\|m^2\|_{L^2}^2)\\
&\quad+C_3(\bar\rho,\bar m)t^2(\|P\|_{L^4}^4+\|m^2\|_{L^4}^4).
\ea
\ee

Choosing $p=3$ in \eqref{aa96a}, we add  \eqref{aa96a}  multiplied by $2(2\mu+\lambda)(C_3+1)t^2$ to \eqref{zo1} and obtain that for $t\in [\si(T),T]$
\be\la{lbq-jia25'} \ba
& \left(t^2\int\n|\dot{u}|^2dx+ 2(2\mu+\lambda)( C_3 +1)t^2\big(\| P\|_{L^3}^3+ \| m^2\|_{L^3}^3\big)\right)_t\\
&\quad+ {\mu}t^2\|\nabla\dot{u}\|^2_{L^2}+t^2\|P\|^4_{L^4}+t^2\|m^2\|^4_{L^4}\\
&\leq Ct\left(\|\n^{1/2}\dot{u}\|^2_{L^2}+\|P \|^3_{L^3}+\|m^2 \|^3_{L^3}\right)\\
&\quad+C(\bar\rho,\bar m)t^2\|\n^{1/2}\dot{u}\|^2_{L^2}(\|\nabla u \|_{L^2}^2+\|P \|_{L^2}^2+\|m^2 \|^2_{L^2}),
\ea\ee
which, combined with \eqref{h31},  and   \eqref{z1} yields
\be \la{ly9} \ba
& \sup\limits_{\si(T)\le t\le T}t^2\int  \left(\n|\dot{u} |^2+ P^3+m^6  \right)dx \\
&\quad+  \int_{\si(T)}^Tt^2 \left(\|\nabla\dot{u}\|^2_{L^2}+\|P\|_{L^4}^4+\|m^2\|_{L^4}^4\right)dt \le C(\on,\bar m)C_0.
 \ea\ee

Finally, we claim that for $k=1,2,\cdots,$
\be \la{ly10}
\sup\limits_{\si(T)\le t\le   T}t^{k}(\|P\|_{L^{k+1}}^{k+1}+\|m^2\|_{L^{k+1}}^{k+1})+\int_{\si(T)}^Tt^k(\|P\|_{L^{k+2}}^{k+2}+\|m^2\|_{L^{k+2}}^{k+2})dt\le C(k,\on,\bar m)C_0,
\ee
which together with \eqref{h18}, \eqref{h31}, and \eqref{ly9} gives \eqref{ly8}.   We shall prove \eqref{ly10} by induction. In fact, for $k=1,$ \eqref{ly10} holds  due to \eqref{h31}. Assume that \eqref{ly10} holds for $k=n,$ that is,
\be \la{ly16}
\sup\limits_{\si(T)\le t\le T}t^{n}(\|P\|_{L^{n+1}}^{n+1}+\|m^2\|_{L^{n+1}}^{n+1})+\int_{\si(T)}^Tt^n(\|P\|_{L^{n+2}}^{n+2}+\|m^2\|_{L^{n+2}}^{n+2})dt\le C(n,\on,\bar m)C_0.
\ee
Multiplying \eqref{aa96a} (where  $p=n+2$)  by $t^{n+1} ,$ one obtains after  using \eqref{ly9}
\be\la{zo8}\ba &
  \left(t^{n+1}\| P\|_{L^{n+2}}^{n+2}+t^{n+1}\| m^2\|_{L^{n+2}}^{n+2} \right)_t+\frac{n+1}{2(2\mu+\lambda)} t^{n+1}(\|P\|_{L^{n+3}}^{n+3}+\|m^2\|_{L^{n+3}}^{n+3})\\
&\le C (n,\on) t^{n}(\| P\|_{L^{ n+2} }^{n+2}+\| m^2\|_{L^{ n+2} }^{n+2})+C(n,\on)C_0\left(\|\na u\|^2_{L^2}+ \|P \|^2_{L^2}+\|m^2\|^2_{L^2}\right).
\ea\ee
Integrating \eqref{zo8} over $[\si(T),T]$ together with \eqref{ly16} and  \eqref{h27} shows that \eqref{ly10} holds for $k=n+1.$   By induction, we obtain \eqref{ly10} and finish the proof of Lemma \ref{ly1}.
\end{proof}

Next, the following Lemma \ref{lemma2.7} combined with Lemma \ref{lemma2.6} will be useful to estimate the $L^p$-norm of $\n\dot u$ and obtain the uniform  (in time) upper bound of the $\rho$ and $m$ for large time.

\begin{lemma}\la{lemma2.7}
Let $(\n,u,m)$ be a smooth solution  of (\ref{a1})-(\ref{h2}) on $\O \times (0,T] $ satisfying (\ref{z1}). Then there exists a positive constant $C$ depending only  on $\mu,$  $\lambda,$  $\gamma, a,$ $\on$, $N_0$, $\bar m$ and $M$ such that
\be \la{o4.7} \sup_{0\le t\le \sigma(T)}\int|m| \bar x^a  dx\le  C(\bar\rho,\bar m,M).\ee
\end{lemma}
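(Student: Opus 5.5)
The plan is a weighted $L^1$ estimate for $|m|$ obtained from the transport structure of $(\ref{a1})_3$, together with a weighted $L^p$ bound on $u$ coming from Lemma \ref{1leo}.

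\emph{Step 1: transport identity.} Since $m_t+\div(mu)=0$, the function $|m|$ satisfies (after a standard regularization of the absolute value, e.g. with $\sqrt{m^2+\epsilon^2}$)
\begin{equation*}
|m|_t+\div(|m|u)=0 .
\end{equation*}
This gives conservation of $\int|m|\,dx$, which is the fact quoted in the remarks. Multiplying by $\bar x^a$ and integrating by parts yields
\begin{equation*}
\frac{d}{dt}\int |m|\bar x^a dx=\int |m|\, u\cdot\nabla \bar x^a\, dx\le C\int |m|\bar x^{a}\,\bar x^{-1}\log^{2}(e+|x|^2)\,|u|\,dx,
\end{equation*}
using $|\nabla\bar x^a|\le C\bar x^{a-1}\log^2(e+|x|^2)$.

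\emph{Step 2: splitting the right-hand side.} I write $|m|\bar x^{a}=(|m|\bar x^a)^{1-1/p}\,|m|^{1/p}\bar x^{a/p}$ and apply H\"older with a large exponent $p$ (say $p=8$). The $u$-factor is $\bar x^{-1}\log^2(e+|x|^2)\,|u|\,|m|^{1/p}\bar x^{a/p}$, which I bound using $|m|\le 2\bar m$ from \eqref{z1}:
\begin{equation*}
\frac{d}{dt}\int |m|\bar x^a dx\le C(\bar m)\Big(\int |m|\bar x^a dx\Big)^{1-1/p}\Big\|\bar x^{-1+a/p}\log^2(e+|x|^2)\,u\Big\|_{L^p}.
\end{equation*}
The weight $\bar x^{-1+a/p}\log^{2}(e+|x|^2)$ decays like $\bar x^{-1+a/p+\epsilon}$. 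For $p$ large this is dominated by $\bar x^{-\delta}$ with $\delta$ close to $1$, and in particular
\begin{equation*}
\|\bar x^{-\delta}u\|_{L^p}\le C\Big(\int \frac{|u|^p}{e+|x|^2}\log^{-\theta}(e+|x|^2)\,dx\Big)^{1/p}
\end{equation*}
for some $\theta>1+p/2$, provided $p(\delta)>2$ with room for the logarithms. This is exactly the setting of Lemma \ref{1leo}, which gives the bound $C\|u\|_{L^2(B_1)}+C\|\nabla u\|_{L^2}$. The choice of $p$ large enough, relative to $a$, that the powers of $\bar x$ match the weight in \eqref{3h} is a routine but essential check.

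\emph{Step 3: the main obstacle, bounding $\|u\|_{L^2(B_1)}$.} This is where vacuum matters, and it is the step I expect to be hardest; it should follow Li--Xin. I would first show that the mass localization $\int_{B_{N_1}}\rho\,dx\ge 1/4$ holds on $[0,\sigma(T)]$ with $N_1\sim 2N_0$. This uses $\frac{d}{dt}\int\rho\varphi\,dx=\int\rho u\cdot\nabla\varphi\,dx$ for a cutoff $\varphi$, bounded by the energy $C_0^{1/2}$ from \eqref{a16}. Then Lemma \ref{lemma2.6} with $r=2$, or a Poincar\'e-type argument using $\rho\ge0$ with mass in $B_{N_1}$, gives
\begin{equation*}
\|u\|_{L^2(B_{N_1})}\le C\big(\|\rho^{1/2}u\|_{L^2}+\|\nabla u\|_{L^2}\big).
\end{equation*}
The $\rho\bar x^\alpha$ factor is avoided here by using only the Poincar\'e form of that estimate.

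\emph{Step 4: closing the estimate.} Setting $Y=\int|m|\bar x^a\,dx$ and using $(1+Y)^{1-1/p}$, I obtain
\begin{equation*}
Y'\le C(1+Y)\big(1+\|\rho^{1/2}u\|_{L^2}+\|\nabla u\|_{L^2}\big).
\end{equation*}
By \eqref{a16} the last factor is integrable on $[0,\sigma(T)]$, with $\int_0^1\|\nabla u\|_{L^2}\,dt\le C_0^{1/2}$. Gr\"onwall's inequality together with $Y(0)\le M$ from \eqref{h7} then gives \eqref{o4.7}.
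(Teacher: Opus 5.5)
Your proposal is correct and follows essentially the same route as the paper. Both arguments test the transport equation \eqref{mfc} for $|m|$ against $\bar x^a$. Both then use a H\"older splitting with $|m|\le 2\bar m$, leaving only a weighted $L^p$ norm of $u$. That norm is controlled by Lemma~\ref{1leo} together with mass localization and a Poincar\'e-type inequality, which is exactly the paper's \eqref{ljo1}. The argument closes with Gr\"onwall, using \eqref{a16} and $\int|m_0|\bar x^a\,dx\le M$.

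The differences are cosmetic. The paper splits as $\||m|\bar x^{a-1+8/(8+a)}\|_{L^{(8+a)/(7+a)}}\|u\bar x^{-4/(8+a)}\|_{L^{8+a}}$, whereas your split needs $p>a+2$, so ``$p=8$'' only works for $a<6$, as you anticipated. You also prove the short-time mass localization directly with a cutoff, instead of quoting Li--Xin's \eqref{uq2}.
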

\begin{proof}
First, multiplying $\eqref{a1}_1$ by $(1+|x|^2)^{1/2} $ and integrating the resulting equality over $\O$ by parts (see \cite[(3.40)]{Li201977} also) we can obtain
\be \la{o3.7}
\sup_{0\le s\le t}\int\n  (1+|x|^2)^{1/2}  dx\le  C(M)(1+t).
\ee

Next, we recall the following inequality, proved in \cite[Lemma 3.5]{Li201977}. For any $\al>1,$ there
exists a positive constant $N_1$ depending only on $\al,$ $N_0,$ and $M$ such that for all $t\in (0,T],$
\be\la{uq2}
\int_{B_{N_1(1+t)\log^\al(e+t)}}\n(x,t) dx \ge  \frac14.
\ee

Then, it follows from  \eqref{3h}, \eqref{uq2}, and the Poincar\'e-type inequality \cite[Lemma 3.2]{F2} that for  $s>2, \delta\in(0,1], $ and $t\in [0,\si(T)],$
\be \la{ljo1}
\|u\bar x^{-\delta}\|_{L^{s/\delta}}\le C(s,\delta,\on,N_0,M)\|\n^{1/2}u\|_{L^2}+C(s,\delta,\on,N_0,M)\|\na u\|_{L^2}.
\ee

Finally, observing that $|m|$ satisfies
\begin{align}\label{mfc}
(|m|)_t+\div(|m|u)=0,
\end{align}
we multiply $\eqref{mfc}$ by $\bar x^{a}$ and integrate the resulting equality over $\O $ leads to
\begin{equation}
\begin{split}\label{mmfc}
\frac{d}{dt}\int|m| \bar x^a  dx &\le C\int |m| |u|\bar x^{a -1} \log^2 (e+|x|^2) dx\\ 
&\le C \||m|\bar x^{a-1+8/(8+a)}\|_{L^{(8+a)/(7+a)}}\|u\bar x^{-4/(8+a)}\|_{L^{8+a}}\\
&\le C(\bar\rho,\bar m,N_0,M)\left(1+\int|m| \bar x^a  dx \right)\left(\|\rho^{1/2}u\|_{L^2}+\|\na u\|_{L^2}\right).
\end{split}
\end{equation}
which combining with \eqref{a16} gives \eqref{o4.7}.
\end{proof}

Next, to obtain the upper bound of the $\rho$ and $m$ for small time, we still need the following lemma.

\begin{lemma}\la{zc1} Let $(\n,u,m)$ be a smooth solution of (\ref{a1})-(\ref{h2})   on $\O \times (0,T] $ satisfying (\ref{z1}) and the assumptions in Theorem \ref{th1}. Then there
exists a positive constant  $K $   depending only on $\mu ,  \lambda ,   \ga ,  a ,  \on, \bar m, \beta,$ $N_0,$ and $M$  such
that
   \be\la{uv1}  \sup_{0\le t\le \si(T)}t^{1-\beta}\|\na
u\|_{L^2}^2+\int_0^{\si(T)}t^{1-\beta}\int\n|\dot u|^2dxdt\le
K(\on,\bar m,M). \ee

\end{lemma}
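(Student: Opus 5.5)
This follows Li--Xin \cite[Lemma 3.4]{Li201977}: a linear splitting of the initial velocity plus interpolation between a $\dot H^0$-type bound (energy) and a $D^1$-type bound (time-weighted). Fix the coefficients $\rho,m$ of the solution and write $u=w_1+w_2$ on $[0,\si(T)]$. Here $w_1$ solves the linear Lam\'e-type problem
\be\no
\rho\dot w_1-\mu\Delta w_1-(\mu+\lambda)\na\div w_1=0,\qquad w_1(x,0)=u_0,
\ee
with $\dot w_1\triangleq w_{1,t}+u\cdot\na w_1$. The remainder $w_2$ solves
\be\no
\rho\dot w_2-\mu\Delta w_2-(\mu+\lambda)\na\div w_2=-\na(P+m^2),\qquad w_2(x,0)=0.
\ee
The aim is to show that $t^{1-\beta}\|\na w_i\|_{L^2}^2+\int_0^t s^{1-\beta}\|\rho^{1/2}\dot w_i\|_{L^2}^2ds$ is bounded by $K$ for each $i$.

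\textbf{Bounds for $w_1$.} The standard energy identity (using $(\ref{a1})_1$) gives
\be\no
\sup_t\|\rho^{1/2}w_1\|_{L^2}^2+\int_0^{\si(T)}\|\na w_1\|_{L^2}^2dt\le C\|\rho_0^{1/2}u_0\|_{L^2}^2 .
\ee
We would rather use $\|u_0\|_{L^2}$-type control on $\rho^{1/2}w_1$ through $\bar\rho$. Testing the equation by $\dot w_1$ gives
\be\no
\sup_t\|\na w_1\|_{L^2}^2+\int_0^{\si(T)}\|\rho^{1/2}\dot w_1\|_{L^2}^2dt\le C\|\na u_0\|_{L^2}^2 .
\ee
The convective error term $\int|\na u||\na w_1|^2$ is handled using $\|\na u\|_{L^4}$ from \eqref{h18}, Lemma~\ref{le5} and \eqref{z1}, closed by Gr\"onwall. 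Multiplying by $t$ in the dotted estimate gives the $L^2\to D^1$ smoothing bound $t\|\na w_1\|_{L^2}^2\lesssim\|u_0\|_{L^2}^2$. Since $u_0\notin L^2$ in general, the plan is instead to use the $\dot H^\beta$ norm through real interpolation. The map $u_0\mapsto\big(t^{(1-\beta)/2}\na w_1,\;t^{(1-\beta)/2}\rho^{1/2}\dot w_1\big)$ is linear. It is bounded from $D^1$ into the weight-$t^0$ space and from $\dot H^0$-like data into the weight-$t^1$ space, so interpolating at $\theta=\beta$ yields the $t^{1-\beta}$ weight with constant $C\|u_0\|_{\dot H^\beta}^2\le CM^2$.

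The coefficients are rough, so the genuine $L^2$ endpoint needs care. The $\rho^{1/2}w_1\in L^2$ bound and the smoothing estimate require $\|w_1\bar x^{-1}\|$ control via \eqref{ljo1}, \eqref{o3.7} and \eqref{uq2}. Those are available on $[0,\si(T)]$ with constants depending on $N_0,M$.

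\textbf{Bounds for $w_2$.} Testing by $\dot w_2$ and integrating by parts in the pressure term as in Lemma~\ref{le21} (term $I_4$) gives
\be\no
\sup_t t^{1-\beta}\|\na w_2\|_{L^2}^2+\int_0^{\si(T)} t^{1-\beta}\|\rho^{1/2}\dot w_2\|_{L^2}^2\le C\int_0^{\si(T)}\!\big(\|P\|_{L^2}^2+\|m^2\|_{L^2}^2+\|\na w_2\|^2_{L^2}\big)\,dt+\text{(lower order)}.
\ee
No weight singularity arises because $w_2(0)=0$. The right side is bounded using \eqref{z1}, \eqref{a16} and $P,m^2\le C(\bar\rho,\bar m)$. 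Adding the two pieces and using $\dot u=\dot w_1+\dot w_2$ gives \eqref{uv1}.

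\textbf{Main obstacle.} The hardest step is justifying the interpolation for $w_1$, since the operator depends on the nonsmooth, possibly vanishing $\rho$. At the $L^2$ endpoint one cannot directly bound $\|\rho^{1/2}u_0\|$ by $\|u_0\|_{L^2}$ in a scale-compatible way. I would first try Li--Xin's device: run the $t$-weighted estimate with $\|\rho_0^{1/2}u_0\|_{L^2}\le\bar\rho^{1/2}\|u_0\|_{L^2}$, and split $u_0$ in Fourier into low and high frequencies, using $\dot H^\beta\subset L^2+D^1$ with the $K$-functional at scale $t$. That makes the final bound depend only on $\|u_0\|_{\dot H^\beta}$, $\bar\rho$, $\bar m$ and $M$.
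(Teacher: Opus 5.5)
You use the same splitting $u=w_1+w_2$ as the paper, and the same interpolation between the $D^1$ and $L^2$ endpoints for $w_1$. The difference is the test function. You test the two linear problems with the material derivatives $\dot w_i$, whereas the paper, following Hoff and Li--Xin, tests with $w_{i,t}$.

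This choice changes what is needed:
\begin{itemize}
\item Testing with $w_{i,t}$ leaves the transport term $\int\n\dot w_i\cdot(u\cdot\na w_i)\,dx$. The paper therefore first has to prove \eqref{jan2}, i.e.\ $\sup_{t\le\si(T)}\int\n|u|^{2+\nu}dx\le C$. That in turn requires the moment bound \eqref{o4.7} for $|m|\bar x^a$ and \eqref{ljo1} to control $\int m^4|u|^\nu dx$. The $w_2$ step also uses weighted products such as $\|m^2\bar x^{\alpha}\|_{L^3}\|u\bar x^{-\alpha}\|_{L^6}$.
\item Testing with $\dot w_i$ replaces the transport term by the commutator $\int|\na u||\na w_i|^2dx$. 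In the pressure term, the pieces $\int(P+m^2)\,u\cdot\na\div w_2\,dx$ cancel, as in the treatment of $I_3,I_4$ in Lemma \ref{le21}, leaving only $C\int(P+m^2)|\na u||\na w_2|dx$. Everything then closes with \eqref{a16}, \eqref{z1}, Gagliardo--Nirenberg and elliptic regularity.
\end{itemize}
So your route is more elementary and makes the $\int\n|u|^{2+\nu}dx$ estimate (the ``second difficulty'' of the introduction) unnecessary for this lemma. The paper's route follows the established template and produces \eqref{jan2} as a by-product, which the paper does not use elsewhere.

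Two steps should be carried out differently from what you wrote.
\begin{itemize}
\item Close the commutator with $\|\na u\|_{L^2}$, not $\|\na u\|_{L^4}$. By \eqref{g1} and $\|\na^2w_1\|_{L^2}\le C\|\n\dot w_1\|_{L^2}$,
$\int|\na u||\na w_1|^2dx\le C\|\na u\|_{L^2}\|\na w_1\|_{L^2}\|\na^2w_1\|_{L^2}\le\frac14\|\n^{1/2}\dot w_1\|_{L^2}^2+C\|\na u\|_{L^2}^2\|\na w_1\|_{L^2}^2$,
and the Gr\"onwall weight $\|\na u\|_{L^2}^2$ is integrable by \eqref{a16}. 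With $\|\na u\|_{L^4}$ the weight becomes $\|\na u\|_{L^4}^{4/3}$. The bounds from \eqref{h18} and Lemma \ref{le5} are scaling-critical at $t=0$ and give only $\|\na u\|_{L^4}^{4/3}\lesssim t^{-1}$, which is not integrable.
\item For $w_2$, $\|\na^2 w_2\|_{L^2}$ is not controlled by $\|\n\dot w_2\|_{L^2}$ alone, because of the forcing $\na(P+m^2)$. Estimate $\|\na w_2\|_{L^4}$ through $(2\mu+\lambda)\div w_2-P-m^2$ and $\na^\perp\cdot w_2$, as in \eqref{sas6}--\eqref{sax6}. Do the $w_2$ estimate without the weight $t^{1-\beta}$; this is fine since $w_2(0)=0$ and $t^{1-\beta}\le1$.
\end{itemize}

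Finally, the concern in your last paragraph is unfounded. The bound $\|\n_0^{1/2}w_{10}\|_{L^2}\le\on^{1/2}\|w_{10}\|_{L^2}$ gives the $L^2$ endpoint directly, without \eqref{ljo1}. Your dyadic-in-time $K$-functional splitting is a hands-on version of the Stein--Weiss interpolation the paper quotes, and it works equally well.
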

\begin{proof}
First, set 
$$\nu= \min\left\{\frac{\mu^{1/2}}{ 2(1+2\mu+\lm)^{1/2} },\,\frac{ \beta}{1-\beta}\right\}\ for\ \beta\in(0,1),\quad \nu=\frac{\mu^{1/2}}{ 2(1+2\mu+\lm)^{1/2} }\ for\ \beta=1.
$$ If $\beta\in (0,1),$   Sobolev's inequality implies
\be \la{zz.1}\ba\int \n_0|u_0|^{2+\nu}dx &\le  \int \n_0|u_0|^{2 }dx +  \int \n_0 |u_0|^{2/(1-\beta)}dx\\ &\le C(\on)+C(\on)\|u_0\|_{\dot H^\beta}^{2/(1-\beta)}\le C(\on,M).\ea\ee
For $\beta=1,$ one obtains from \eqref{z.1} that
\be  \la{zz.2}\ba\int \n_0|u_0|^{2+\nu}dx &\le C(\on)\left( \int \n_0|u_0|^{2 }dx +  \int   |\na u_0|^2dx\right)^{(2+\nu)/2} \le C(\on,M).\ea\ee

Then, multiplying $(\ref{a1})_2$ by $(2+\nu)|u|^\nu u$  and   integrating the resulting equation over $\O$ lead  to
\be\ba\label{nugj}
& \frac{d}{dt}\int \n |u|^{2+\nu}dx+ (2+\nu) \int|u|^\nu (\mu |\na u|^2+(\mu+\lambda) (\div u)^2) dx \\
& \le  (2+\nu)\nu  \int (\mu+\lambda)|\div u||u|^\nu |\na u|dx +C  \int \n^\gamma |u|^\nu |\na u|dx+C\int m^2|u|^\nu|\na u|dx  \\
& \le \frac{2+\nu}{2}\int (\mu+\lambda)(\div u)^2|u|^\nu  dx +\frac{(2+\nu)\mu}{4} \int  |u|^\nu |\na u|^2 dx \\
&\quad + C \int \n |u|^{2+\nu} dx+C\int \n^{(2+\nu)\gamma-\nu/2}dx+C\int m^4 |u|^\nu dx.
\ea\ee
It follows from \eqref{a16}, \eqref{o4.7}, and \eqref{ljo1}  that
\be\la{lbq-jia2}\ba
&\int_0^{\si(T)} \int m^4 |u|^\nu dxdt\\
&\leq C\int_0^{\si(T)}\|m^4{\bar x}^{(2-\nu)a/2}\|_{L^{2/(2-\nu)}}\||u|^\nu\bar x^{(\nu-2)a/2}\|_{L^{2/\nu}}dt\\
&\leq  C(\on,N_0,M)\int_0^{\si(T)} \left(\int |m|^{8/(2-\nu)}{\bar x}^a dx\right)^{(2-\nu)/2}\left(\|\rho^{1/2}u\|_{L^2}+\|\na u\|_{L^2} \right)^{\nu}dt\\
&\leq  C(\on,\bar m,N_0,M)\int_0^{\si(T)} \left(\int |m|{\bar x}^a dx\right)^{(2-\nu)/2}\left(\|\rho^{1/2}u\|_{L^2}+\|\na u\|_{L^2} \right)^{\nu}dt\\
&\le C(\on,\bar m,N_0,M)\int_0^{\si(T)}\left(1+\|\rho^{1/2}u \|_{L^2}^2+ \|\na u\|_{L^2}^2\right)dt\le C(\on,\bar m,M),
\ea\ee
where we used $\nu\in(0,2/3)$ and
\begin{align*}
\||u|^\nu\bar x^{(\nu-2)a/2}\|_{L^{2/\nu}}&\leq C\left(\|u\bar x^{-1}\|_{L^{(2-\nu)a/\nu}}
\|\bar x^{1+{(\nu-2)a}/{(2\nu)}}\|_{L^\frac{2a(2-\nu)}{2a-\nu a-2\nu}}\right)^\nu\\
&\leq C(\bar\rho,N_0, M)\left(\|\rho^{1/2}u\|_{L^2}+\|\na u\|_{L^2} \right)^{\nu}.
\end{align*}

Then, applying Gr\"onwall's inequality to \eqref{nugj}, and using \eqref{zz.1}, \eqref{zz.2}, and \eqref{lbq-jia2}, we obtain 
\be\la{jan2}
\sup_{0\le t\le  \si(T) }\int \n |u|^{2+\nu}dx\le C(\on,\bar m,N_0,M ) .
\ee

Next, as in \cite{hof2002}, for the linear differential operator $L$ defined by
\be\ba
(Lw)^j&\triangleq \n w^j_{t}+ \n u\cdot\na w^j
-(\mu\Delta w^j+(\mu+\lambda) \pa_j\div w )\no
&=\n\dot w^j
-(\mu\Delta w^j+(\mu+\lambda) \pa_j\div w ),\quad j=1,2  ,
\ea\ee
let $w_1$ and $w_2$  be the solutions to:
\be \la{sas2}
Lw_1 =0 ,\quad w_1(x,0)=w_{10}(x),
\ee
and
\be\la{sas3}
Lw_2 = -\nabla P(\n)-\nabla m^2,\quad w_2(x,0)=0,
\ee
respectively. A straightforward energy estimate of \eqref{sas2}  shows  that:
\be\la{sas4}
\sup_{0\le t\le\si(T)}\int \n|w_1|^2dx+\int^{\si(T)}_{0}
\int|\nabla w_1|^2dxdt\leq C(\overline{\n})\int |w_{10}|^2dx.
\ee

Then, multiplying (\ref{sas2}) by
$ w_{1t} $ and integrating the resulting equality over $\O $  yield  that  for $t\in (0,\si(T)],$
 \be\la{sas5}\ba& \frac{1}{2}\left( \mu\|\na {w_1}\|_{L^2}^2
 +(\mu+\lambda)\|\div {w_1}\|^2_{L^2} \right)_t
 +\int\n|\dot {w_1}|^2dx\\&=\int \n\dot {w_1}(u\cdot\na {w_1})dx
   \\&\le  C(\on )\|\n^{1/2}\dot w_1\|_{L^2} \|\n^{1/(2+\nu)}u\|_{L^{2+\nu}}\| \na^2  w_1\|_{L^2}^{2/(2+\nu)} \| \na  w_1\|_{L^2}^{\nu/(2+\nu)}
  \\&\le \frac12\int\n|\dot {w_1}|^2dx+C(\on,\bar m,M)  \| \na  w_1\|_{L^2}^2,
  \ea\ee
  where in the last inequality we have used \eqref{jan2} and the following simple fact:
$$ \|\na^2w_1\|_{L^2}\le C\|\n\dot w_1\|_{L^2}, $$
  due to     the standard $L^2$-estimate of the  elliptic  system  (\ref{sas2}).
 Gr\"onwall's inequality  together with (\ref{sas5}) and (\ref{sas4})  gives
\be \label{uu1} \sup_{0\le t\le \si(T)}\|\nabla
w_1\|_{L^2}^2+\int_0^{\si(T)}\int\n|\dot {w_1}|^2dxdt\leq C(\on,\bar m,M)\|\na
w_{10}\|_{L^2}^2,\ee and \be  \label{uu2} \sup_{0\le t\le
\si(T)}t\|\nabla w_1\|_{L^2}^2+\int_0^{\si(T)}t\int\n|\dot
{w_1}|^2dxdt\leq C(\on,\bar m,M)\| w_{10}\|_{L^2}^2.\ee

Since the solution operator $w_{10}\mapsto w_1(\cdot,t)$ is linear,
by the   standard Stein-Weiss interpolation argument (see \cite{bl}),
one can deduce from (\ref{uu1}) and (\ref{uu2}) that for any
$\theta\in [\beta,1],$\be \label{uu4} \sup_{0\le t\le
\si(T)}t^{1-\theta}\|\nabla
w_1\|_{L^2}^2+\int_0^{\si(T)}t^{1-\theta}\int\n|\dot
{w_1}|^2dxdt\leq C(\on,\bar m,M)\| w_{10}\|_{\dot H^\theta}^2,\ee with a uniform
constant $C$ independent of $\theta.$

Finally, we estimate $w_2.$ In a similar way to the proof of
(\ref{h19}) and (\ref{h18}), one obtians that
\be\la{sas6}\|\na\big((2\mu+\lambda)\div w_2-P- m^2\big)\|_{L^2}+\|\na (\na^\perp\cdot w_2)\|_{L^2}\le C\|\n  \dot
w_2 \|_{L^2} ,
\ee
and that for $p\ge 2,$
\be \la{sax6}\ba
\|\na w_2\|_{L^p}&  \le C (\|(2\mu+\lambda)\div w_2-P- m^2\|_{L^p}+C\|P\|_{L^p}+C\|m^2\|_{L^p}+\|\na^\perp\cdot w_2\|_{L^p}   \\
&\le \de\|\n  \dot
w_2 \|_{L^2}  +C(\on,\bar m,p,\de)\|\na w_2\|_{L^2}  +C(\on,\bar m,p,\de)C_0^{1/p} .
\ea\ee
Multiplying (\ref{sas3}) by $w_{2t} $ and  integrating the resulting equation over $\O  $   yields that for $t\in (0,\si(T)],$
\be \la{xac6}\ba
& \frac{1}{2}\left( \mu\|\na {w_2}\|_{L^2}^2+(\mu+\lambda)\|\div {w_2}\|^2_{L^2} -2\int P\div w_2dx -2\int m^2\div w_2dx\right)_t
+\int\n|\dot {w_2}|^2dx\\
&=\int \n\dot {w_2}(u\cdot\na{w_2})dx-\int \big(P_t+(m^2)_t\big)\div w_2dx \\
&\le  C(\on)\|\n^{1/2}\dot w_2\|_{L^2}\|\n^{1/(2+\nu)}u\|_{L^{2+\nu}}\| \na  w_2\|_{L^{2(2+\nu)/\nu}} -\int \big(P_t+(m^2)_t\big)\div w_2dx \\
&\le C(\on,M)\de\|\n^{1/2}\dot w_2\|_{L^2}^2 +C(\de,\on,\bar m,M)\left(\|\na w_2\|_{L^2}^2 +\|\na u\|_{L^2}^2+1\right),
\ea\ee
where in the last inequality we have used (\ref{sax6}), \eqref{jan2},  and the following simple fact:
\be\ba
&-\int \big(P_t+(m^2)_t\big)\div w_2dx     \\
&= -\frac{1}{2\mu+\lm}\int (P+m^2)u\cdot\na ((2\mu+\lm)\div w_2-P-m^2)dx \\
&\quad+\frac{1}{2(2\mu+\lm)}\int (P+m^2)^2\div u dx+\int [(\gamma-1)P+m^2]\div u\div w_2dx \\
&\le C\|Pu\|_{L^2}\|\n\dot w_2\|_{L^2}+C\|m^2{\bar x}^{\alpha}\|_{L^3}\|u{\bar x}^{-\alpha}\|_{L^6}\|\n\dot w_2\|_{L^2}+ C(\|P^2\|_{L^2}+\|m^4\|_{L^2})\|\na u\|_{L^2}\\
&\quad+ C\|\na u\|_{L^2}\|\nabla w_2\|_{L^2}  \\
&\le \de\|\n^{1/2}\dot w_2\|_{L^2}^2 +C(\de,\on,\bar m,M )\left( \|\na w_2\|_{L^2}^2+\|\na u\|_{L^2}^2+1\right),
\ea\ee
where $\alpha=\min\{a/3,1/2\}$, and where we used \eqref{a95} and \eqref{sas6}. Gr\"onwall's inequality together with (\ref{xac6})  gives
\be  \label{uu3}
\sup_{0\le t\le \si(T)}\|\nabla w_2\|_{L^2}^2+\int_0^{\si(T)}\int\n|\dot {w_2}|^2dxdt\leq
C(\on,\bar m,M).
\ee
Taking $w_{10}=u_0 $ so that $w_1+w_2=u,$
we then derive (\ref{uv1}) from (\ref{uu4}) and (\ref{uu3}) directly.
Thus, we finish the proof of Lemma \ref{zc1}.
\end{proof}

We now proceed to derive a uniform (in time) upper bound for the $\rho$ and $m$, which turns out to be the key to obtain all the higher order estimates.
We will use an approach motivated by   \cite{lx,HuangLiXin2012}).
\begin{lemma}\la{le7}
There exists a positive constant $\ve_0=\ve_0(\on, \bar m, M)$ depending on $\mu, \lambda, \ga, a, \on, \bar m,$ $\beta,$ $N_0,$ and $M$ such that, if $(\n,u,m)$ is a smooth solution  of (\ref{a1})-(\ref{h2}) on $\O \times (0,T]$ satisfying (\ref{z1}) and the assumptions in Theorem \ref{th1}, then
\be\la{lv102}
\sup_{0\le t\le T}\|\n(t)\|_{L^\infty}\le\frac{7\bar \n }{4},\quad \sup_{0\le t\le T}\|m(t)\|_{L^\infty}\le\frac{7\bar m }{4},
\ee
provided
$C_0\le \ve_0\triangleq\min\{\ve_1,\ve_2,\ve_3,\ve_4\}.$
\end{lemma}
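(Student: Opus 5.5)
The plan follows the Zlotnik-inequality approach of \cite{HuangLiXin2012,Li201977}. It is applied separately to $\theta(\n)\triangleq(2\mu+\lambda)\log\n$ and to $(2\mu+\lambda)\log|m|$ along particle trajectories. Write $D_t=\pa_t+u\cdot\na$. By $\eqref{a1}_1$, $\eqref{a1}_3$ and the definition \eqref{hj1} of $F$,
\begin{equation*}
(2\mu+\lambda)D_t\log\n=-(P+m^2)-F,\qquad (2\mu+\lambda)D_t\log|m|=-(P+m^2)-F.
\end{equation*}
Both right-hand sides share the damping term $-(P+m^2)$. Since $P\ge0$ and $m^2\ge0$, we get $g(y)\le -e^{\ga y/(2\mu+\lambda)}$ for the $\n$-equation, and $g(y)\le -e^{2y/(2\mu+\lambda)}$ for the $m$-equation; in each case the other pressure component is simply dropped. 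Thus $g(\infty)=-\infty$, and Lemma \ref{le1} applies once the forcing $b(t)=-\int_0^t F(X(s),s)\,ds$ satisfies \eqref{a100}. The whole task is therefore to bound $\int_{t_1}^{t_2}\|F\|_{L^\infty}dt$, with $N_0$ small (of order a positive power of $C_0$) so that $\max\{\log\on,\bar\zeta\}+N_0\le\log(7\on/4)$, and likewise for $\bar m$.

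\emph{Small time $t\in[0,\si(T)]$.} Take $N_1=0$. By \eqref{g2} and \eqref{h19}, $\|F\|_{L^\infty}\le C\|F\|_{L^2}^{(q-2)/(2q-2)}\|\n\dot u\|_{L^q}^{q/(2q-2)}$ for some $q>2$, together with \eqref{h20}. To control $\|\n\dot u\|_{L^q}$ I would use Lemma \ref{lemma2.6} with $v=\dot u$ (via \eqref{uq2} and \eqref{o3.7}), which gives $\|\n\dot u\|_{L^q}\le C(\|\n^{1/2}\dot u\|_{L^2}+\|\na\dot u\|_{L^2})$. The powers of time are then handled with the weights in $A_1,A_2$ and in Lemma \ref{zc1}. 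The bound $t^{1-\beta}\|\na u\|_{L^2}^2\le K$, combined with interpolation against the smallness $A_1+A_2\le 2C_0^{1/2}$ from \eqref{z1}, makes every integrand of the form $t^{-\kappa}$ with $\kappa<1$ and carrying a factor $C_0^{\de}$. The result is $\int_0^{\si(T)}\|F\|_{L^\infty}dt\le C(\on,\bar m,M)C_0^{\de}$ for some $\de>0$, which is $\le\log(7/4)/2$ once $C_0\le\ve_1$ (for $\n$) and $\le\ve_2$ (for $m$). This step is the main obstacle: the exponents must be balanced so that the singularity at $t=0$ is integrable using only $u_0\in\dot H^\beta$. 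I would first try $\|F\|_{L^\infty}\le C\|F\|_{L^2}^{1/2}\|\na F\|_{L^4}^{1/2}$-type splittings and adjust $q$ near $2$ to tune the exponents.

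\emph{Large time $t\in[\si(T),T]$.} Here I would use the decay rates of Lemma \ref{ly1}. They give $\|F\|_{L^\infty}\le C\|\n\dot u\|_{L^2}^{\cdot}\|\n\dot u\|_{L^4}^{\cdot}(\ldots)\le C C_0^{\de}\,t^{-\kappa'}$-type bounds, and estimating the $L^4$ norm of $\n\dot u$ through Lemma \ref{lemma2.6} with the weights $t^2\|\na\dot u\|_{L^2}^2$ from \eqref{ly9} yields, via Young's inequality, $\int_{t_1}^{t_2}\|F\|_{L^\infty}dt\le N_1(t_2-t_1)+CC_0^{\de}$, where $N_1=1$, say. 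Since $N_1=1$, the constant $\bar\zeta$ in \eqref{a101} is a fixed value depending only on $\mu,\lambda,\ga$: $(2\mu+\lambda)\zeta$ with $e^{\ga\zeta}\ge1$, i.e. $\bar\zeta\le0$, so $\bar\zeta\le\log\on$ because $\on\ge1$. Similarly $\bar m\ge1$ covers the $m$ case. This is exactly why the theorem assumes $\on,\bar m\ge1$. Starting from $\n(\si(T))\le 3\on/2$ and $|m(\si(T))|\le3\bar m/2$ from the small-time step, and choosing $C_0\le\ve_3,\ve_4$ so that the $N_0$-term is small, I obtain \eqref{lv102}.

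Throughout, the only place the magnetic field needs care beyond \cite{Li201977} is the $L^\infty$ bound on $m$. There $|m|$ obeys the same transport structure \eqref{mfc} as $\n$, so the estimates are identical with $\ga$ replaced by $2$; sign issues are avoided by working with $\log|m|$ on $\{m\neq0\}$, since zeros of $m$ are preserved along trajectories.
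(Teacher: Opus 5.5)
Your framework is the paper's: Zlotnik's lemma along particle paths, dropping the other nonnegative pressure component, a split at $\si(T)$, and a data-independent $\bar\zeta$ absorbed by $\on,\bar m\ge1$. Working with $(2\mu+\lambda)\log\n$ and $(2\mu+\lambda)\log|m|$ is a harmless variant. Since Lemma \ref{le1} is stated as an equality, you should put $-\int_0^t m^2\,ds$ (resp. $-\int_0^t P\,ds$) into $b$, as the paper does with $-\n m^2$.

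The gap is that the two bounds on $\int\|F\|_{L^\infty}dt$, which are the whole content of the proof, are not carried out, and the choices you indicate fail.

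\emph{Small time.} For $t\le\si(T)$ you bound $\|\n\dot u\|_{L^q}\le C(\|\n^{1/2}\dot u\|_{L^2}+\|\na\dot u\|_{L^2})$. The only available control on the factor $\|\na\dot u\|_{L^2}^{q/(2q-2)}$ is then $\int_0^{\si(T)}\si^2\|\na\dot u\|_{L^2}^2dt\le CC_0$. Meanwhile \eqref{uv1} gives only $\|F\|_{L^2}^{(q-2)/(2q-2)}\le Ct^{-(1-\beta)(q-2)/(4(q-1))}$. Hölder in time then requires $(1-\beta)(q-2)+3q<4(q-1)$, i.e. $q>2+2/\beta$. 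As $q\to2^+$ the power of $\|\na\dot u\|_{L^2}$ tends to $1$, leaving $\int_0 t^{-1}(t^2\|\na\dot u\|_{L^2}^2)^{1/2}dt$, which is not controlled. So you must take $q$ large, not near $2$. The paper uses $r=4+4/\beta$, so that $\de_0=(2r+(1-\beta)(r-2))/(3r-4)<1$, and gets $b(t)\le CC_0^{r/(4(r-1))}$, with the smallness coming from $A_2$ via \eqref{h27}. A $q$ near $2$ could be rescued by also interpolating $\|\n\dot u\|_{L^q}$ between $L^2$ and $L^4$, so most of the weight falls on $\|\n^{1/2}\dot u\|_{L^2}$, which is controlled by \eqref{uv1} and $A_1$. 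That is not what you propose.

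\emph{Large time.} For $t\ge\si(T)$ you overlook that the constant in Lemma \ref{lemma2.6} grows in time. By \eqref{uq2}, $N_*=N_1(1+t)\log^\al(e+t)$, and by \eqref{o3.7}, $\|\n\bar x^\al\|_{L^1}$ grows; together these give the factor $(1+t)^5$ in \eqref{z.2}. In interpolations like $\|F\|_{L^2}^{1/3}\|\na F\|_{L^4}^{2/3}$ or $\|F\|_{L^4}^{1/2}\|\na F\|_{L^4}^{1/2}$, this growth, raised to the power $2/3$ or $1/2$, outweighs all the decay available from \eqref{ly8} and \eqref{ly9}. The resulting bound on $\|F\|_{L^\infty}$ grows in $t$, so \eqref{a100} cannot be verified.

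The missing idea is to let $\na F$ enter with a tiny power and use the $H^1$-decay of $F$:
\begin{itemize}
\item By \eqref{h19} and \eqref{ly8}, $\|F\|_{H^1}\le CC_0^{1/2}t^{-1/2}$.
\item \eqref{g2} with both exponents equal to $72$ gives $\|F\|_{L^\infty}\le C\|F\|_{L^{72}}^{35/36}\|\na F\|_{L^{72}}^{1/36}$.
\item Hence $\|F\|_{L^\infty}^4\le CC_0^{35/18}t^{-25/18}(\|\n^{1/2}\dot u\|_{L^2}+\|\na\dot u\|_{L^2})^{1/9}$, whose integral over $[\si(T),T]$ is at most $CC_0^{35/18}$.
\end{itemize}
Young's inequality then yields \eqref{a100} with a fixed $N_1$ and $N_0=CC_0^{35/18}$.
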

\begin{proof}
First, we rewrite the equation of the mass conservation
$(\ref{a1})_1$ as \be \la{z.3} D_t \n=g(\n)+b'(t), \ee where \bnn
D_t\n\triangleq\n_t+u \cdot\nabla \n ,\quad
g(\n)\triangleq-\frac{\n^{\ga+1 }}{2\mu+\lambda}  ,
\quad b(t)\triangleq-\frac{1}{2\mu+\lambda} \int_0^t\n \left(F+m^2\right)dt. \enn

It follows from \eqref{h19}, \eqref{uq2}, \eqref{o3.7}, and \eqref{z.1} that for $t>0$ and $p\in [2,\infty),$
\be\la{z.2}\ba   \|\na F\|_{L^p}
&\le C(p)\|\n \dot u \|_{L^p} \\
&\le C(p,\on,\bar m,M)(1+t)^5\left(\|\n^{1/2}\dot u \|_{L^2}+\|\na\dot u \|_{L^2}\right),
\ea\ee
which, together with   the Gagliardo-Nirenberg inequality  \eqref{g2} for $q=2$, yields  that for $r\triangleq 4+4/\beta$ and $\de_0\triangleq (2r+(1-\beta)(r-2))/(3r-4)\in(0,1),$
\be\la{lbqnew-jia32}\ba
b(t)&\le -\frac{1}{2\mu+\lambda} \int_0^t\n Fdt\\
&\le C(\on)\int_0^{\si(T)} \si^{(\beta-1)(r-2)/(4(r-1))}\left(\si^{1-\beta}\|F \|_{L^2}^2\right)^{(r-2)/(4(r-1))}\|\na F \|^{r/(2(r-1))}_{L^r}dt\\
&\le  C(\on,M) \int_0^{\si(T)}\si^{-(2r+(1-\beta)(r-2))/(4(r-1))}\left(\si^2\|\na F\|^{2}_{L^r} \right)^{r/(4(r-1))}dt\\
&\le  C(\on,M) \left(\int_0^{\si(T)}\si^{-\de_0}dt\right)^{(3r-4)/(4(r-1))}  \left(\int_0^{\si^(T)} \si^2\|\na F\|^{2}_{L^r}  dt\right)^{r/(4(r-1))}\\ 
&\le  C(\on,\bar m,M) \left( \int_0^{\si(T)} \left(\si^2\|\n^{1/2}\dot u\|^{2}_{L^2} +\si^2\|\na\dot u\|^{2}_{L^2}\right)dt \right)^{r/(4(r-1))}\\ 
&\le  C(\on,\bar m,M) C_0^{r/(4(r-1))} ,
\ea\ee
where in the second, fourth, and last inequalities one has used respectively \eqref{uv1}, \eqref{z.2}, and   \eqref{h27}. This combined with \eqref{z.3} yields that
   \be\la{a103}
   \sup_{t\in
[0,\si(T)]}\|\n\|_{L^\infty} \le \on
+C(\on,\bar m,M)C_0^{1/4}\le\frac{3 \bar\n  }{2},\ee
 provided $$C_0\le \ve_1\triangleq\min\{1, (\on/(2C(\on,\bar m,M)))^{4}\}. $$

Furthermore, it follows from \eqref{h19} and \eqref{ly8} that for $t\in[\si(T),T],$
\be \la{hg2} \ba
\|F\|_{H^1}
&\le C\left(\|\na u \|_{L^2}+\|P\|_{L^2}+\|m^2\|_{L^2}+\|\n\dot u\|_{L^2}\right)\\
&\le C(\on,\bar m) C_0^{1/2}t^{-1/2},
\ea\ee
which together with \eqref{g2} and \eqref{z.2} shows
\be\la{hg1}\ba
\int_{\si(T)}^T\|F\|_{L^\infty}^4dt
&\le C\int_{\si(T)}^T\|F\|_{L^{72}}^{\frac{35}{9}}\|\na F\|_{L^{72}}^{\frac{1}{9}}dt\\
&\le C(\on,\bar m,M)C_0^{35/18}\int_{\si(T)}^T  t^{-\frac{25}{18}}(\|\n^{1/2}\dot u\|_{L^2} +\|\na\dot u\|_{L^2} )^{\frac{1}{9}}dt\\
&\le C(\on,\bar m,M)C_0^{35/18},
\ea\ee
where in the last inequality, one has used \eqref{z1} and \eqref{a16}.
This shows that for all $\si(T)\le t_1\le t_2\le T,$
\bnn\ba
b(t_2)-b(t_1) &=-\frac{1}{2\mu+\lambda} \int_{t_1}^{t_2}\n \left(F+m^2\right)dt\\
&\le  C(\on) \int_{t_1}^{t_2} \|F\|_{L^\infty}dt  \\
&\le \frac{1}{2\mu+\lambda}(t_2-t_1)+ C(\on,M) \int_{\si(T)}^T \|F\|_{L^\infty}^4dt\\
&\le \frac{1}{2\mu+\lambda}(t_2-t_1)+  C(\on,\bar m,M) C_0^{35/18}   ,
\ea\enn
which implies that  one can
choose $N_1$ and $N_0$ in (\ref{a100}) as:
\bnn
N_1=\frac{ 1}{2\mu+\lambda},\quad  N_0=C(\on,\bar m,M)C_0^{35/18}.
\enn
Hence, we set $\bar\zeta= 1 $ in (\ref{a101}) since for all $  \zeta \ge 1,$
$$
g(\zeta)=-\frac{ \zeta^{\ga+1}}{2\mu+\lambda} \le -N_1=- \frac{ 1}{2\mu+\lambda}.
$$
Lemma \ref{le1} and (\ref{a103}) thus leads to
\be\la{a102}
\sup_{t\in[\si(T),T]}\|\n\|_{L^\infty}\le  \frac{ 3\bar \n}{2}  +N_0 \le\frac{7\bar \n }{4},
\ee
provided
$$
C_0\le\min\{\ve_1,\ve_2 \}, \quad\mbox{ for}\ve_2\triangleq \left(\frac{ \bar \n }{4C(\on,\bar m,M) }\right)^{18/35}.
$$
The combination of (\ref{a103}) with (\ref{a102}) yields
\begin{align}\label{nsj}
\sup_{0\le t\le T}\|\n(t)\|_{L^\infty}\le\frac{7\bar \n }{4}.
\end{align}

Finally, it follows from \eqref{mfc} and \eqref{hj1} that
$$
D_t|m|=-\frac{|m|^3}{2\mu+\lambda}-\frac{|m|(\rho^\gamma+F)}{2\mu+\lambda},
$$
which together with Lemma \ref{le1}, we also can obtain
$$
\sup_{0\le t\le T}\|m(t)\|_{L^\infty}\le\frac{7\bar m }{4},
$$
provided that
$$ C_0\le\min\{\ve_3,\ve_4 \}, \quad\mbox{ for}\
\ve_3\triangleq\left(\frac{\bar m}{2C(\on,\bar m,M)}\right)^{4},\
\ve_4\triangleq \left(\frac{ \bar m }{4C(\on,\bar m,M) }\right)^{18/35},$$
which combining with \eqref{nsj} yields \eqref{lv102} and completes the proof of Lemma \ref{le7}.
\end{proof}

With Lemma \ref{le5} and Lemma \ref{le7} at hand, we are now in a position to prove Proposition \ref{pr1}.

\textbf{Proof of Proposition  \ref{pr1}.}
It follows from \eqref{h27} that
\be\la{end-h27}
A_1(T)+A_2(T)\le  C_0^{1/2},
\ee
provided
$$
C_0\le\varepsilon_5\triangleq(C(\bar{\n},\bar m))^{-2}.
$$


Letting $\varepsilon\triangleq\min\{\varepsilon_0, \varepsilon_5\}$, we obtain \eqref{z2} directly from \eqref{lv102}, \eqref{end-h27} and  finish the proof of Proposition  \ref{pr1}.

\section{A priori estimates (II): higher order estimates }\la{se5} 
From now on, for smooth initial data $(\n_0,u_0,m_0)$ satisfying \eqref{co1} and \eqref{h7}, assume that $(\n,u,m)$ is a smooth solution of (\ref{a1})-(\ref{h2}) on $\O \times (0,T]$ satisfying (\ref{z1}).
Then, we derive some necessary uniform  estimates on the spatial gradient of the smooth solution $(\n,u,m)$.
\begin{lemma}\la{le4}
There is a positive constant $C$ depending only on $T, \mu, \lambda, \ga, a, \on, \bar m,$ $\beta, N_0,$ $M, q,$  $\|\n_0\|_{H^1\cap W^{1,q}},$ $\|m_0\|_{H^1\cap W^{1,q}}$ and $\|\nabla u_0\|_{L^2}$ such that
\be\la{pa1}
\ba
&\sup_{0\le t\le T}\left(\norm[H^1\cap W^{1,q}]{\rho}+\|m\|_{H^1\cap W^{1,q}}+\|\nabla u\|_{L^2}+ t\|\na^2 u\|^2_{L^2}\right)\\
&+\int_0^T \left(\|\nabla^2  u\|_{L^2}^2+\|\nabla^2 u\|_{L^q}^{(q+1)/q}+t\|\nabla^2 u\|_{L^q}^2 \right)dt\le C.
\ea
\ee
\end{lemma}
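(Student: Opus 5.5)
The plan follows the scheme of \cite[Lemma 4.1]{Li201977}, with $m$ treated alongside $\n$; the two satisfy the same type of transport equation. Since $T$ is fixed, constants may grow with $T$. The argument has three steps: an $L^2$ bound on $\na u$ uniform on $[0,T]$, gradient estimates for $(\n,m)$ closed by a Beale--Kato--Majda argument, and then the second-derivative bounds on $u$.

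\textbf{Step 1: $\sup_t\|\na u\|_{L^2}$.} Return to \eqref{n1}, now without the weight $\si$. By \eqref{h18}, the bounds \eqref{z1} and \eqref{a16}, we have
\[\|\na u\|_{L^3}^3\le C\|\n\dot u\|_{L^2}\bigl(\|\na u\|_{L^2}^2+1\bigr)+C,\]
and the terms with $P+m^2$ are bounded by $C\|\na u\|_{L^2}^2$. Absorbing $\|\n^{1/2}\dot u\|_{L^2}$, using \eqref{n2'}, and applying Gr\"onwall with $\int_0^T\|\na u\|_{L^2}^2dt\le CC_0$ gives
\[\sup_t\|\na u\|_{L^2}^2+\int_0^T\|\n^{1/2}\dot u\|_{L^2}^2dt\le C,\]
with $C$ depending on $\|\na u_0\|_{L^2}$. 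Next multiply \eqref{lv3.46} by $t$. Using \eqref{h18} and this bound to control $\|\na u\|_{L^4}^4$, Gr\"onwall yields
\[\sup_t t\|\n^{1/2}\dot u\|_{L^2}^2+\int_0^T t\|\na\dot u\|_{L^2}^2dt\le C.\]
The standard elliptic estimate then gives
\[\|\na^2u\|_{L^2}\le C\bigl(\|\n\dot u\|_{L^2}+\|\na P\|_{L^2}+\|\na m^2\|_{L^2}\bigr).\]
So the $\na^2u$ bounds reduce to bounds on $\na\n$ and $\na m$.

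\textbf{Step 2: gradients of $\n$ and $m$ (the main obstacle).} For $p\in\{2,q\}$, differentiate the transport equations and use $|\na m^2|\le 4\bar m|\na m|$ to get
\[\frac{d}{dt}\bigl(\|\na\n\|_{L^p}+\|\na m\|_{L^p}\bigr)\le C\bigl(1+\|\na u\|_{L^\infty}\bigr)\bigl(\|\na\n\|_{L^p}+\|\na m\|_{L^p}\bigr)+C\|\na^2u\|_{L^p}.\]
Write $\na^2u$ through $F$ and $\o$:
\[\|\na^2u\|_{L^p}\le C\bigl(\|\n\dot u\|_{L^p}+\|\na P\|_{L^p}+\|\na m^2\|_{L^p}\bigr),\]
where $\|\n\dot u\|_{L^q}$ is controlled by Lemma~\ref{lemma2.6} together with \eqref{z.2}, i.e.\ by $C(\|\n^{1/2}\dot u\|_{L^2}+\|\na\dot u\|_{L^2})$. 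For $\|\na u\|_{L^\infty}$, apply Lemma~\ref{le9}. There $\|\div u\|_{L^\infty}+\|\o\|_{L^\infty}$ is bounded by $C+\|F\|_{L^\infty}+\|\o\|_{L^\infty}$, and by \eqref{g2} and \eqref{h19} this is at most $C(1+\|\n\dot u\|_{L^q})$. Set
\[f(t)=e+\|\na\n\|_{L^q\cap L^2}+\|\na m\|_{L^q\cap L^2}.\]
This produces
\[(\log f)'\le C g(t)\log f,\qquad g=1+\|\n\dot u\|_{L^q}.\]
The key check is $g\in L^1(0,T)$. This follows from $\int_0^T t\|\na\dot u\|_{L^2}^2dt\le C$ and Cauchy--Schwarz against $t^{-1/2}$; for $q$ large a Gagliardo--Nirenberg interpolation with the $L^2$ norm may be needed. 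Gr\"onwall then bounds $\log f$, and hence $\int_0^T\|\na u\|_{L^\infty}dt$ and $\sup_t\|(\na\n,\na m)\|_{L^2\cap L^q}$. The $L^2\cap L^q$ norms of $\n$ and $m$ themselves follow from the $L^\infty$ bounds \eqref{z1} together with the conservation of $\int\n\,dx$ and $\int|m|\,dx$.

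\textbf{Step 3: second derivatives of $u$.} With Step 2 in hand,
\[\|\na^2u\|_{L^2}\le C\bigl(\|\n^{1/2}\dot u\|_{L^2}+1\bigr).\]
This gives $\int_0^T\|\na^2u\|_{L^2}^2dt\le C$ and $\sup_t t\|\na^2u\|_{L^2}^2\le C$ from Step 1. Also $\|\na^2u\|_{L^q}\le C(1+\|\n\dot u\|_{L^q})$, and $\|\n\dot u\|_{L^q}$ is interpolated between $\|\n^{1/2}\dot u\|_{L^2}$ and $\|\na\dot u\|_{L^2}$. Then $t\|\na^2u\|_{L^q}^2\in L^1$ follows from the weighted bounds in Step 1. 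Finally, $\|\na^2u\|_{L^q}^{(q+1)/q}\in L^1$ follows by H\"older after splitting off the power of $t$: the exponent $(q+1)/q<2$ leaves an integrable factor $t^{-(q+1)/(2q)}$, since $(q+1)/(2q)<1$.
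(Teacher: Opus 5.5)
The structure of your argument matches the paper's, but the step you call the key check fails as written, and the same flaw reappears at the end of Step 3.

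\textbf{Time integrability of $\|\rho\dot u\|_{L^q}$.} You bound $\|\rho\dot u\|_{L^q}\le C(\|\rho^{1/2}\dot u\|_{L^2}+\|\nabla\dot u\|_{L^2})$ and then apply Cauchy--Schwarz against $t^{-1/2}$. This gives $\int_0^T\|\nabla\dot u\|_{L^2}dt\le (\int_0^T t^{-1}dt)^{1/2}(\int_0^T t\|\nabla\dot u\|_{L^2}^2dt)^{1/2}$, and $\int_0^T t^{-1}dt=\infty$. So it fails for every $q>2$, not only for large $q$. More generally, $\int_0^T t\|h\|^2dt<\infty$ never gives $\int_0^T\|h\|^s dt<\infty$ for any $s\ge 1$. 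This is why your final claim in Step 3 also fails: H\"older would need $t^{-(q+1)/(2q)}\in L^{2q/(q-1)}(0,T)$, i.e.\ $(q+1)/(q-1)<1$, which is false.

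\textbf{The Gronwall inequality.} After you replace $\|F\|_{L^\infty}+\|\omega\|_{L^\infty}$ by $C(1+\|\rho\dot u\|_{L^q})$, the BKM logarithm $\log(e+\|\nabla^2u\|_{L^q})$ still contains $\log(e+\|\rho\dot u\|_{L^q})$. What you actually obtain is $(\log f)'\le Cg\log f+Cg\log(e+g)$, not $(\log f)'\le Cg\log f$. Closing this requires $g\log(e+g)\in L^1(0,T)$, not merely $g\in L^1$. The paper avoids this in \eqref{b24} by keeping the sublinear power $q/(2(q-1))$ from \eqref{g2} and absorbing the logarithm into it.

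\textbf{The missing ingredient is \eqref{4a2}.} You mention an interpolation as an optional fix for large $q$, but it is needed for all $q>2$ and is the real content of this step.
\begin{itemize}
\item Interpolate $\|\rho\dot u\|_{L^q}\le\|\rho\dot u\|_{L^2}^{2(q-1)/(q^2-2)}\|\rho\dot u\|_{L^{q^2}}^{q(q-2)/(q^2-2)}$, and apply \eqref{z.2} only to the $L^{q^2}$ factor.
\item Then $\|\nabla\dot u\|_{L^2}$ appears only to the power $q(q-2)/(q^2-2)<1$.
\item Combine your unweighted bound $\int_0^T\|\rho^{1/2}\dot u\|_{L^2}^2dt\le C$ with $\int_0^T t\|\nabla\dot u\|_{L^2}^2dt\le C$ by H\"older. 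The leftover factor is $t^{-\kappa/(\kappa+2)}$ with $\kappa=q(q+1)(q-2)$, which is integrable.
\end{itemize}
This yields $\int_0^T\|\rho\dot u\|_{L^q}^{1+1/q}dt\le C$. That bound is enough for the Gronwall step in either form, since $x\log(e+x)\le C(1+x^{1+1/q})$. It also gives $\int_0^T\|\nabla^2u\|_{L^q}^{(q+1)/q}dt\le C$ directly. With this in place, the rest of your argument (Step 1, the transport estimates for $\nabla\rho$ and $\nabla m$, and the $L^2$-based bounds on $\nabla^2u$) is sound.
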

\begin{proof}
First, \eqref{h29} together with Gr\"onwall's inequality, \eqref{ly12}, \eqref{h27}, \eqref{a16}, and \eqref{z1} yields
\be\ba\la{ah31}
&\sup\limits_{0\le t\le T}\left( \| \na u\|_{L^2}^2+ \|P\|_{L^2}^2+\| m^2 \|_{L^2}^2\right) \\
&+\int_{0}^{T} \int  \left(\n|\dot{u} |^2+ P^3 +m^6 \right)dxdt\le C.
\ea\ee

Next, multiplying \eqref{lv3.46} by $t$ and integrating the resulting inequality over $(0,T)$ combined with \eqref{a16} and \eqref{ah31} lead to
\be\la{b19}\ba
\sup_{0\le t\le T}t \|\n^{1/2}\dot{u}\|_{L^2}^2 +\int_0^Tt\|\na\dot{u}\|_{L^2}^2dt\le C.
\ea\ee

On the one hand, for $   p\in [2, q],$ $|\nabla\n|^p$ satisfies
\begin{equation}
\begin{split}
& (|\nabla\n|^p)_t + \text{div}(|\nabla\n|^pu)+ (p-1)|\nabla\n|^p\text{div}u  \\
 &+ p|\nabla\n|^{p-2}(\nabla\n)^t \nabla u (\nabla\n) +
p\n|\nabla\n|^{p-2}\nabla\n\cdot\nabla\text{div}u = 0.
\end{split}
\end{equation}
Thus,
\be\la{L11}\ba
\frac{d}{dt} \norm[L^p]{\nabla\n}
&\le C(1+\norm[L^{\infty}]{\nabla u} )\norm[L^p]{\nabla\n} +C\|\na^2u\|_{L^p}\\
&\le C(1+\norm[L^{\infty}]{\nabla u} )\norm[L^p]{\nabla\n} +C\|\n\dot u\|_{L^p}+C\|\nabla m^2\|_{L^p},
\ea\ee
due to
\be
\la{ua1}\|\na^2 u\|_{L^p}\le   C\left(\|\n\dot u\|_{L^p}+ \|\nabla
P \|_{L^p}+\|\na m^2\|_{L^p}\right),\ee
which follows from the standard
$L^p$-estimate for the following elliptic system:
 \bnn  \mu\Delta
u+(\mu+\lambda)\na {\rm div}u=\n \dot u+\na P+\na m^2,\quad \, u\rightarrow
0\,\,\mbox{ as } |x|\rightarrow \infty. \enn

On the other hand, by a similar calculation, it follows from \eqref{a1}$_3$ that
\be\la{aL11}\ba
\frac{d}{dt} \norm[L^p]{\nabla m}\le C(1+\norm[L^{\infty}]{\nabla u} )\norm[L^p]{\nabla m} +C\|\n\dot u\|_{L^p}+C\|\nabla P\|_{L^p}.
\ea\ee

Combining \eqref{L11} and \eqref{aL11}, we can obtain
\be\la{aaL11}\ba
\frac{d}{dt} (\norm[L^p]{\nabla \n}+\norm[L^p]{\nabla m})\le C(1+\norm[L^{\infty}]{\nabla u} )(\norm[L^p]{\nabla \n} +\norm[L^p]{\nabla m}) +C\|\n\dot u\|_{L^p}.
\ea\ee

Next, it follows from the Gagliardo-Nirenberg inequality, \eqref{ah31}, and \eqref{h19} that
\be\la{419}\ba
&\|\div u\|_{L^\infty}+\|\o\|_{L^\infty}\\
&\le C \|F\|_{L^\infty}+C\|P\|_{L^\infty}+C\|m^2\|_{L^\infty} +C\|\o\|_{L^\infty}\\
&\le C(q) +C(q) \|\na F\|_{L^q}^{q/(2(q-1))} +C(q) \|\na \o\|_{L^q}^{q/(2(q-1))}\\ &\le C(q) +C(q) \|\n\dot u\|_{L^q}^{q/(2(q-1))},
\ea\ee
which, together with Lemma \ref{le9}, \eqref{ua1} and \eqref{ah31}, yields that
\be\la{b24}\ba
\|\na u\|_{L^\infty }\le
&C\left(\|{\rm div}u\|_{L^\infty }+\|\o\|_{L^\infty } \right)\log(e+\|\na^2 u\|_{L^q})+C\|\na u\|_{L^2}+C\\
\le& C\left(1+\|\n\dot u\|_{L^q}^{q/(2(q-1))}\right)\log(e+\|\rho \dot u\|_{L^q} +\|\na \rho\|_{L^q}+\|\nabla m \|_{L^q})\\
\le& C\left(1+\|\n\dot u\|_{L^q}\right)\log(e+\|\na \rho\|_{L^q}+\|\nabla m \|_{L^q}).
\ea\ee

Next, from H\"older's inequality and \eqref{z.2}   that
\bnn\la{b22}\ba
\| \rho \dot u\|_{L^q} & \le\| \rho \dot u\|_{L^2}^{2(q-1)/(q^2-2)}\|\n\dot u\|_{L^{q^2}}^{q(q-2)/(q^2-2)}\\
& \le C \| \rho \dot u\|_{L^2}^{2(q-1)/(q^2-2)}\left(\| \rho^{1/2} \dot u\|_{L^2}+\|\na\dot u\|_{L^2}\right)^{q(q-2)/(q^2-2)}\\
& \le C \| \rho^{1/2}  \dot u\|_{L^2} +C \| \rho^{1/2} \dot u\|_{L^2}^{2(q-1)/(q^2-2)}\|\na \dot u\|_{L^2}^{q(q-2)/(q^2-2)},
\ea\enn
which combined with \eqref{ah31} and\eqref{b19} implies that
\be\la{4a2}\ba
&\int_0^T\left(\|\rho \dot u\|_{L^q}^{1+1 /q}+t\| \rho \dot u\|_{L^q}^2\right)dt\\
&\le C \int_0^T\left( \| \rho^{1/2}  \dot u\|_{L^2}^2+ t\|\na \dot u\|_{L^2}^2\right)dt
+C\int_0^{\sigma(T)}t^{-\frac{\kappa}{\kappa+2}}dt+C \\
&\le C.
\ea\ee
where $\kappa=q(q+1)(q-2)$.

Then,
 substituting \eqref{b24} into \eqref{aaL11} where $p=q$, we deduce from Gr\"onwall's inequality and \eqref{4a2} that
\be \la{b30}
\sup\limits_{0\le t\le T}(\|\nabla\rho\|_{L^q}+\|\nabla m\|_{L^q})\le C,
\ee
which, along with \eqref{ua1} and \eqref{4a2}, shows

\be\la{mhd7}
\int_0^T\left(\|\na^2u\|_{L^q}^{(q+1)/q}+t\|\na^2u\|_{L^q}^2\right)dt\leq C.
\ee

Finally, taking $p=2$ in (\ref{L11}), one gets by using \eqref{ah31}, \eqref{mhd7}, and Gr\"onwall's inequality that
\bnn
\sup\limits_{0\le t\le T}(\|\nabla \n\|_{L^2}+\|\nabla m\|_{L^2})\le C,
\enn
which, together with (\ref{b30}), (\ref{ua1}), (\ref{b19}), and (\ref{mhd7}), yields (\ref{pa1}). The proof of Lemma \ref{le4} is completed.
\end{proof}

Next, we are going to show the following spatially weighted mean estimate of $m$ and $\rho$, which was proved in Lemma \ref{lemma2.7} and \cite[Lemma 4.2]{Li201977}.
\begin{lemma}\la{le6}
There is a positive constant $C$ depending only on $T, \mu, \lambda, \ga, a, \on, \bar m, \beta, N_0, M, q,$  $\|\na(\bar x^a\n_0)\|_{L^2\cap L^q},$ $\|m_0\|_{H^1\cap W^{1,q}}$ and $\|\nabla u_0\|_{L^2}$ such that
\be\la{q}\ba
&\sup_{0\le t\le T}\big(\|\bar x^a m\|_{L^1}+\|\bar x^a\n\|_{L^1\cap H^1\cap W^{1,q}}\big)\le C.
\ea\ee
\end{lemma}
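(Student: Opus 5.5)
The plan is to treat the two parts of \eqref{q} separately, first the $L^1$ moments and then the weighted gradients. All estimates are on the finite interval $[0,T]$, so constants may depend on $T$ and on the bounds of Lemma \ref{le4}.

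\emph{Step 1: $L^1$ moments.} For $\|\bar x^a m\|_{L^1}$ I redo the computation \eqref{mmfc} on all of $[0,T]$ instead of $[0,\si(T)]$. The weighted velocity bound \eqref{ljo1} still holds for $t\le T$: \eqref{uq2} gives a mass lower bound on a ball of radius $N_1(1+T)\log^\al(e+T)$, and Lemma \ref{lemma2.6} then applies with constants depending on $T$. Hence
\[
\frac{d}{dt}\int|m|\bar x^a dx\le C(T)\Bigl(1+\int|m|\bar x^a dx\Bigr)\bigl(\|\n^{1/2}u\|_{L^2}+\|\na u\|_{L^2}\bigr).
\]
Since $\|\na u\|_{L^2}$ is bounded by \eqref{pa1}, Gr\"onwall's inequality gives the bound. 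The same argument with $\n$ in place of $|m|$, using $\n_t+\div(\n u)=0$, controls $\|\bar x^a\n\|_{L^1}$.

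\emph{Step 2: weighted gradients.} For $p\in\{2,q\}$ I work with $\na(\bar x^a\n)$. Writing $\phi=\bar x^a\n$, the transport equation gives
\[
\phi_t+u\cdot\na\phi-a\phi\,u\cdot\na\log\bar x+\n\bar x^a\div u=0.
\]
Differentiating and testing against $|\na\phi|^{p-2}\na\phi$ yields
\[
\frac{d}{dt}\|\na\phi\|_{L^p}\le C\bigl(1+\|\na u\|_{L^\infty}+\|u\,\na\log\bar x\|_{L^\infty}\bigr)\|\na\phi\|_{L^p}+R,
\]
where $R$ collects terms of the form $\|\phi\,|\na u|\,|\na\log\bar x|\|_{L^p}$, $\|\phi\,|u|\,|\na^2\log\bar x|\|_{L^p}$ and $\|\bar x^a\n\,\na^2u\|_{L^p}$. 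Two facts control the lower-order pieces: $|\na\log\bar x|\le C\bar x^{-1}\log^2(e+|x|^2)$, and $\|u\bar x^{-\de}\|_{L^\infty}$ is bounded via \eqref{ljo1} together with $\|\na u\|_{L^q}$ from \eqref{pa1}. With these, those terms reduce to $\|\phi\|_{L^\infty}$ and $\|\na\phi\|_{L^p}$, and $\|\phi\|_{L^\infty}$ is in turn bounded by \eqref{g2} using $\|\phi\|_{L^1}$ from Step 1 and $\|\na\phi\|_{L^q}$.

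\emph{Main obstacle.} The hardest term is $\|\bar x^a\n\,\na^2u\|_{L^p}$, since it carries the full weight $\bar x^a$ on $\na^2 u$. My first attempt is to estimate $\|\bar x^a\n\|_{L^\infty}\|\na^2u\|_{L^p}$, with $\|\bar x^a\n\|_{L^\infty}\le C(1+\|\na(\bar x^a\n)\|_{L^q})$ from \eqref{g2}. This keeps the estimate linear in the unknown, with coefficient $\|\na^2u\|_{L^p}$. That coefficient lies in $L^1(0,T)$, since $\int_0^T\|\na^2u\|_{L^q}^{(q+1)/q}dt\le C$ by \eqref{pa1}. Similarly, $\|\na u\|_{L^\infty}\in L^1(0,T)$ by \eqref{b24} and \eqref{4a2}. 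Gr\"onwall's inequality applied to $\|\na(\bar x^a\n)\|_{L^2}+\|\na(\bar x^a\n)\|_{L^q}$ then closes the $H^1\cap W^{1,q}$ bound. Finally, $\|\bar x^a\n\|_{L^2}$ and $\|\bar x^a\n\|_{L^q}$ follow by interpolation between the $L^1$ and $L^\infty$ bounds.
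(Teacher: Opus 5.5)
Your proposal is correct and takes essentially the same approach as the paper: the paper gets the $\bar x^a m$ moment by rerunning the computation \eqref{mmfc} on $[0,T]$, and defers the $\bar x^a\n$ bounds to Li--Xin's Lemma 4.2, whose argument is the same weighted transport equation for $\bar x^a\n$, the bound on $\|\bar x^a\n\|_{L^\infty}$ via \eqref{g2} and the $L^1$ moment, and Gr\"onwall with the $L^1(0,T)$ coefficients $\|\na u\|_{L^\infty}+\|\na^2u\|_{L^2\cap L^q}$. Two small corrections: the $[0,T]$ version of \eqref{ljo1} comes from \eqref{3h}, \eqref{uq2} and the Poincar\'e-type inequality, not from Lemma \ref{lemma2.6}; and $\|\na u\|_{L^q}$, hence $\|u\bar x^{-\de}\|_{L^\infty}$, is only integrable in time rather than bounded, which is all Gr\"onwall needs.
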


\begin{lemma}\label{lem4.5v}
There is a positive constant $C$ depending only on  $T, \mu, \lambda, \ga, a, \on, \bar m, \beta, N_0, M, q,$  $\|\na(\bar x^a\n_0)\|_{L^2\cap L^q},$ $\|m_0\|_{H^1\cap W^{1,q}}$ and $\|\nabla u_0\|_{L^2}$ such that
\begin{equation}\la{gj13}\ba
\sup_{0\leq t\leq T }(t\|\na u\|^2_{H^1}+t\|\n^{1/2}u_t\|^2_{L^2})+\int_0^Tt\|\na u_t\|_{L^2}^2dt\le C.\ea
\end{equation}
\end{lemma}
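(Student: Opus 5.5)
The plan is to follow the standard route of \cite[Lemma 4.3]{Li201977}, adapted to the magnetic pressure. The work splits into three steps: the bound on $t\|\na u\|_{H^1}^2$, a weighted energy estimate for $u_t$, and the closing of that estimate by Gr\"onwall.

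\textbf{Step 1: the $H^1$ part.} The bound $t\|\na u\|_{H^1}^2\le C$ is essentially already available. By \eqref{ua1} with $p=2$ and the bounds \eqref{pa1}, \eqref{b19},
\[
t\|\na^2u\|_{L^2}^2\le Ct\left(\|\n^{1/2}\dot u\|_{L^2}^2+\|\na P\|_{L^2}^2+\|\na m^2\|_{L^2}^2\right)\le C .
\]
It therefore remains to control $t\|\n^{1/2}u_t\|_{L^2}^2$ and $\int_0^T t\|\na u_t\|_{L^2}^2dt$. The idea is to compare $u_t=\dot u-u\cdot\na u$. For the weighted $L^2$ part,
\[
\|\n^{1/2}u_t\|_{L^2}\le \|\n^{1/2}\dot u\|_{L^2}+\|\n^{1/2}|u||\na u|\|_{L^2}.
\]
The last term is bounded by $C\|\n^{1/2}u\|_{L^{4}}\|\na u\|_{L^4}$, which via Lemma \ref{lemma2.6} (with $N_*$ from \eqref{uq2}, fixed on $[0,T]$) and \eqref{g1} is at most $C\|\na u\|_{H^1}$. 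Multiplying by $t$ and using \eqref{b19} together with Step 1 gives $t\|\n^{1/2}u_t\|_{L^2}^2\le C$.

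\textbf{Step 2: the gradient of $u_t$.} For $\na u_t=\na\dot u-\na(u\cdot\na u)$ the velocity itself is not controlled in $L^p$, only $\bar x^{-\delta}u$ via \eqref{ljo1}, whose constant is harmless on a fixed $[0,T]$. The plan is to estimate
\[
\|\na(u\cdot\na u)\|_{L^2}\le \|\na u\|_{L^4}^2+\|u\bar x^{-\delta}\|_{L^{s}}\,\|\bar x^{\delta}\na^2 u\|_{L^{2s/(s-2)}}.
\]
The weighted second derivative is the delicate point. I would avoid it by not differentiating directly. Instead I would differentiate the momentum equation in time and test with $u_t$. This produces the magnetic term
\[
\int \nabla (m^2)_t\cdot u_t\,dx=\int 2m(u\cdot\na m+m\div u)\div u_t\,dx,
\]
and here the factor $m$ (bounded by \eqref{z1}, with $\bar x^a m\in L^1$ from \eqref{q}) absorbs the weight $\bar x^{\delta}$ multiplying $u$. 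This is exactly the mechanism announced in the introduction, and it leaves $\na m$ unweighted in $L^2\cap L^q$ by \eqref{pa1}. The pressure term is treated identically, using $\rho^{1/2}$, or $\bar x^a\rho\in H^1\cap W^{1,q}$ from \eqref{q}, to absorb the weight; this is where Lemma \ref{le6} is needed.

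\textbf{Step 3: closing by Gr\"onwall.} Concretely, the differentiated equation is
\[
\n u_t+\n u\cdot\na u-\mu\Delta u-(\mu+\lambda)\na\div u+\na(P+m^2)=0 .
\]
Applying $\pa_t$ and testing with $u_t$ gives
\[
\frac12\frac{d}{dt}\int\n|u_t|^2+\mu\|\na u_t\|_{L^2}^2\le \sum_i K_i .
\]
The terms $K_i$ involve $\n_t=-\div(\n u)$ (integrate by parts, putting the weights as above), $\n u_t\cdot\na u\cdot u_t$, and the pressure and magnetic terms. Each is bounded by $\frac\mu2\|\na u_t\|_{L^2}^2+C(1+\|\na u\|_{H^1}^2+\|\na^2u\|_{L^q}^{(q+1)/q})(\|\n^{1/2}u_t\|_{L^2}^2+1)$. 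I would multiply by $t$ and apply Gr\"onwall, using \eqref{pa1} for integrability of the coefficient and \eqref{b19} for $\int_0^T\|\n^{1/2}u_t\|_{L^2}^2dt$, the latter derived from Step 1 and \eqref{pa1}. This yields \eqref{gj13}.

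\textbf{Main obstacle.} The main obstacle is the term $\int\n_t|u_t|^2dx=-\int \n u\cdot\na|u_t|^2dx$ and its analogues with $u$ unweighted. For these I would first try
\[
\|\n^{1/2}u_t\|_{L^2}^{1/2}\,\|\n^{1/2}u_t\|_{L^6}^{1/2}\,\|\n^{1/2} u\|_{L^6}\|\na u_t\|_{L^2},
\]
handling the $L^6$ norms by Lemma \ref{lemma2.6}.
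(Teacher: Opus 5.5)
Your proposal is correct and follows essentially the same route as the paper. The sup bounds come from comparing $u_t$ with $\dot u$ (the paper's \eqref{5.av3}) together with \eqref{b19}. The gradient bound comes from differentiating the momentum equation in $t$ and testing with $u_t$; the terms $K_1$--$K_4$ are treated as in Li--Xin, the magnetic term by letting the bounded, weighted-integrable factor $m$ absorb the weight on $u$, and the estimate closes with Gr\"onwall after multiplying by $t$. One small citation fix: the unweighted bound $\int_0^T\|\n^{1/2}u_t\|_{L^2}^2dt\le C$ needed in the Gr\"onwall step comes from \eqref{ah31} combined with \eqref{pa1}, not from \eqref{b19}.
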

\begin{proof}
First, the combination of \eqref{ljo1} with \eqref{q} gives  that
for any $\eta\in(0,1]$ and any $s>2,$
 \be\la{5.d2}\|\n^\eta u \|_{L^{s/\eta}}+ \|u\bar x^{-\eta}\|_{L^{s/\eta}}\le C(\eta,s,T).\ee

Next, differentiating $\eqref{a1}_2$ with respect to $t$ gives
\be\la{zb1}\ba
&\n u_{tt}+\n u\cdot \na u_t-\mu\Delta u_t-( \mu+\lm)\na  \div u_t  \\
&=-\n_t(u_t+u\cdot\na u)-\n u_t\cdot\na u -\na P_t-(\nabla m^2)_t.
\ea\ee
Multiplying \eqref{zb1} by $u_t$ and integrating  over $\O,$ we obtain after using  $\eqref{a1}_1$ that\be\ba  \la{na8}&\frac{1}{2}\frac{d}{dt} \int \n |u_t|^2dx+\int \left(\mu|\na u_t|^2+( \mu+\lm)(\div u_t)^2  \right)dx\\
&=-2\int \n u \cdot \na  u_t\cdot u_tdx  -\int \n u \cdot\na (u\cdot\na u\cdot u_t)dx\\
&\quad-\int \n u_t \cdot\na u \cdot  u_tdx
+\int P_{t}\div u_{t} dx+\int(m^2)_t\div u_tdx\triangleq \sum_{i=1}^{5}K_i.
\ea\ee

Similar to the proof of  \cite[Lemma 4.3]{Li201977}, we have
\be\no
\sum_{i=1}^{4}K_i \le  \de\| \na u_{t}\|_{L^{2}}^{2}+C(\de)  \left(\| \na^{2} u \|_{L^{2}}^{2} +  \|\n^{1/2} u_{t}\|_{L^{2}}^{2}+1\right).
\ee
For the term $K_5$,   we obtain after integration by parts  that
\be\ba\no
K_5&=-\int (u\cdot\nabla m^2\div u_t+2m^2\div u\div u_t)dx\\
&\leq C\|u{\bar x}^{3(2-q)a/7q}\|_{L^{14q/(q-2)}}\|m{\bar x}^{3(q-2)a/7q}\|_{L^{7q/3(q-2)}}\|\nabla m\|_{L^q}\|\nabla u_t\|_{L^2}\\
&\quad+C\|m^2\|_{L^\infty}\|\nabla u\|_{L^2}\|\nabla u_t\|_{L^2}\\
&\leq \de\| \na u_{t}\|_{L^{2}}^{2}+C(\de)  .
\ea\ee

Then, putting $K_1$-$K_5$ into \eqref{na8} and choosing $\de$ suitably  small, we obtain after  using \eqref{ua1} and \eqref{pa1} that
\be \la{5.av2}
\frac{d}{dt} \int \n |u_t|^2dx+\mu\int   |\na u_t|^2 dx   \le   C \int \n |u_t|^2dx+C\int \n |\dot u |^2dx+1.
\ee
It follows from \eqref{ua1}  and \eqref{5.d2} that
\be\ba
&\|\na u\|_{H^1}+\|\n^{1/2} u\cdot\na u\|_{L^2}\\&\le C+C\|\n^{1/2}  \dot u\|_{L^2}+ C\|\n^{1/2} u\|_{L^6}\|\na u\|_{L^2}^{2/3}\|\na^2u\|_{L^2}^{1/3}\\
&\le C +C\|\n^{1/2}  \dot u\|_{L^2}+\frac12\|\na^2u\|_{L^2},
\ea\ee
which, together with \eqref{ah31}, shows
\be \la{5.av3}
\|\na u\|_{H^1}+\|\n^{1/2} u_t\|_{L^2}\le C\|\n^{1/2} \dot u\|_{L^2}+C.
\ee
This combined with \eqref{5.av2}, \eqref{ah31}, and Gr\"onwall's inequality gives \eqref{gj13} and finishes the proof of Lemma \ref{lem4.5v}.
\end{proof}

\section{Proof of Theorem \ref{th1}}\la{se4}

With all the a priori estimates in Sections \ref{se3} and \ref{se5} at hand, we are
ready to prove the main result of this paper.

By Lemma \ref{th0}, there exists a
$T_*>0$ such that the Cauchy problem (\ref{a1})--(\ref{h2}) has a unique strong solution $(\n,u,m)$ on $\O \times
(0,T_*]$. We will use the a priori estimates, Proposition \ref{pr1}
and Lemmas \ref{le4}-\ref{lem4.5v}, to show that the local strong solution exists for all time.

First,  it follows from (\ref{As1}), (\ref{As2}), and (\ref{co1})    that
$$ A_1(0)+A_2(0)=0,  \quad \n_0\leq\bar{\n},  \quad |m_0|\leq\bar{m}.$$   Therefore, there exists a
$T_1\in(0,T_*]$ such that (\ref{z1}) holds for $T=T_1$.

Next, set \bn \la{s1}T^*=\sup\{T\,|\,{\rm (\ref{z1}) \
holds}\}.\en Then $T^*\geq T_1>0$. Hence, for any $0<\tau<T\leq T^*$
with $T$ finite, one deduces from   \eqref{gj13}
that
 \be \la{sp43}\na u \in C([\tau ,T];L^2\cap L^q),\ee
where one has used the standard embedding
$$L^{\infty}(\tau,T; H^1)\cap H^1(\tau,T; H^{-1})\hookrightarrow C(\tau,T; L^q),\quad\texttt{for~any}~q\in[2,\infty).$$
Moreover, it follows from \eqref{pa1}, \eqref{q} and  \cite[Lemma 2.3]{L2} that
\be \la{n20}\n\in C([0,T];L^1\cap H^1\cap W^{1,q}),\quad m\in C([0,T]; H^1\cap W^{1,q}).\ee

Finally,   we claim that \be \la{s2}T^*=\infty.\ee Otherwise,
$T^*<\infty$. Then by Proposition \ref{pr1}, (\ref{z2}) holds for
$T=T^*$. It follows from  \eqref{a16}, \eqref{q},   \eqref{sp43} and
(\ref{n20})  that $(\n(x,T^*),u(x,T^*),m(x,T^*))$ satisfies
(\ref{co1})   except $u(\cdot,T^*)\in \dot H^\beta.$
  Thus, Lemma
\ref{th0} implies that there exists some $T^{**}>T^*$ such that
(\ref{z1}) holds for $T=T^{**}$, which contradicts (\ref{s1}).
Hence, (\ref{s2}) holds. Lemmas \ref{th0} and \ref{le4}-\ref{lem4.5v}
  thus show that $(\n,u,m)$ is in fact the unique
strong  solution defined on $\O \times(0,T]$ for any
$0<T<T^*=\infty$. Thus, the proof of Theorem  \ref{th1} is completed.

\section*{Conflict-of-interest statement}
All authors declare that they have no conflicts of interest.

\section*{Data Availability}
No data were used for the research described in the article.

\section*{Acknowledgments}
J. Li is supported by the National Natural Science Foundation of China (No.12601413). B. L\"u is supported by the National Natural Science Foundation of China (No.12371219), and the Double-Thousand Plan of Jiangxi Province (No. jxsq2023201115).

\end{document}